
\documentclass[12pt,reqno]{amsart}
\usepackage{amssymb}
\usepackage{amsthm}
\usepackage{amsmath}
\usepackage{fullpage}
\usepackage{epsfig}
\usepackage{hyperref,mathrsfs,verbatim,amscd,latexsym}
\usepackage{amssymb}
\usepackage{url}


\usepackage{mathrsfs}
\usepackage{dsfont}

\interfootnotelinepenalty=10000

\addtolength{\footskip}{17pt}
\renewcommand{\setminus}{\smallsetminus}

\renewcommand{\le}{\leqslant}
\renewcommand{\ge}{\geqslant}
\renewcommand{\leq}{\leqslant}
\renewcommand{\geq}{\geqslant}
\renewcommand{\epsilon}{\varepsilon}

\makeatletter

\numberwithin{equation}{section} \numberwithin{figure}{section}


\theoremstyle{plain}
\newtheorem{thm}{Theorem}[section]
\newtheorem{lem}[thm]{Lemma}

\newtheorem*{lem*}{Lemma}
\newtheorem{prop}[thm]{Proposition} 
\newtheorem*{prop*}{Proposition}
\newtheorem{lem-defn}[thm]{Lemma-Definition}
\newtheorem{prop-defn}[thm]{Proposition-Definition}
\newtheorem{cor}[thm]{Corollary}
\newtheorem*{cor*}{Corollary}

\theoremstyle{definition}
\newtheorem{defn}[thm]{Definition}
\newtheorem*{defn*}{Definition}
\newtheorem{example}[thm]{Example}

\theoremstyle{remark}
\newtheorem{rem}[thm]{Remark}
\newtheorem*{rem*}{Remark}

\newtheorem{claim*}{Claim}


\newcommand{\lemref}[1]{Lemma \ref{lem:#1}}




\providecommand{\abs}[1]{\left\vert#1\right\vert}
\providecommand{\norm}[1]{\left\Vert#1\right\Vert}
\providecommand{\size}[1]{\abs{#1}}

\def\G{\Gamma}
\def\g{\gamma}
\def\GX{{\G\bs X}}
\def\Ga{\G_\alpha}
\def\GL{\mathrm{GL}}
\def\Sp{\mathrm{Sp}}

\def\AGG{\mathcal{A}(G,\G)}
\def\AGS{\mathcal{A}(G,S)}
\def\AGj{\mathcal{A}(G,S^j)}

\providecommand{\Cay}[1]{\mathrm{Cay}(#1;S)}
\newcommand{\CayG}{\Cay{\G}}
\newcommand{\CayGa}{\Cay{\Ga}}

\DeclareMathOperator{\Isom}{Isom}           
\DeclareMathOperator{\sgn}{sgn}             

\def\eqdef{\overset{\text{def}}{=}}
\def\bs{\backslash}
\def\restrict{\!\upharpoonright}            

\def\Pmod{\Lambda}
\def\Pmodp{\Pmod^{(p)}}
\def\Pmodt{\Pmod^{(2)}}

\def\dpY{d^p_Y}
\def\dY{d_Y}

\def\MY{\mathcal{M}(Y)}                     
\def\MX{\mathcal{M}(X)}
\def\WX{\mathcal{W}(X)}                     
\def\WGX{\mathcal{W}^{\G}(X)}               
\def\MV{\mathcal{M}(V)}
\def\WV{\mathcal{W}(V)}

\def\clin{\mathrm{c}_\mathrm{lin}}           
\def\cp{\mathrm{c}_p}                        
\def\ct{\mathrm{c}_2}                        
\def\ci{\mathrm{c}_\infty}                   

\def\uY{c_Y}
\def\uYp{\uY^p}

\newcommand{\Dvar}[2]{\abs{\nabla_{#1}(#2)}_p}
\newcommand{\Dmp}[1]{\Dvar{\mu}{#1}}

\newcommand{\Ep}{\mathcal{E}^{(p)}}
\newcommand{\Emp}{\Ep_\mu}
\newcommand{\Emnp}{\Ep_{\mu^n}}
\newcommand{\Emmp}{\Ep_{\mu^m}}
\newcommand{\Emmt}{\mathcal{E}^{(2)}_{\mu^m}}

\newcommand{\Ap}{A^{(p)}}
\newcommand{\Amp}{\Ap_\mu}
\newcommand{\Amnp}{\Ap_{\mu^n}}

\newcommand{\vE}{\vec{E}}
\newcommand{\ve}{\vec{e}}
\newcommand{\vp}{\vec{p}}
\newcommand{\vpp}{\tilde{p}}
\newcommand{\avp}{\abs{\vp}}
\newcommand{\avpp}{\abs{\vpp}}

\newcommand\muX{\mu_X}                       
\newcommand\muXj{\mu_X^j}                    
\newcommand\muGa{\mu_{G,\alpha}}             
\newcommand\muqGa{\muGa^q}                   
\newcommand\mubGX{\bar\mu_{G,X}}             

\newcommand\bn{\bar\nu}

\newcommand\BC{$(\mathrm{B}\mathcal{C})$}
\newcommand\NC{$(\mathrm{N}\mathcal{C})$}
\newcommand\FC{$(\mathrm{F}\mathcal{C})$}

\newcommand\FS{$(\mathrm{F}\mathcal{S})$}

\def\ie{i.e.\ }

\def\wrt{w.r.t.\ }

\def\hf{\frac{1}{2}}
\def\fop{\frac{1}{p}}
\def\foq{\frac{1}{q}}
\def\fqp{\frac{q}{p}}

\makeatother

\DeclareSymbolFont{symbolsC}{U}{txsyc}{m}{n}
\SetSymbolFont{symbolsC}{bold}{U}{txsyc}{bx}{n}
\DeclareFontSubstitution{U}{txsyc}{m}{n}
\DeclareMathSymbol{\coloneqq}{\mathrel}{symbolsC}{66}

\newcommand\remove[1]{}

\newcommand{\rnote}[1]{}

\newcommand{\Lip}{\mathrm{Lip}}

\newcommand{\N}{\mathbb{N}}
\newcommand{\Z}{\mathbb{Z}}
\newcommand{\R}{\mathbb{R}}

\newcommand{\B}{\mathscr{B}}

\newcommand{\Ae}{\mathrm{ae}}
\newcommand{\Prob}{\mathscr{P}}

\newcommand{\e}{\varepsilon}

\newcommand{\E}{\mathbb{E}}

\DeclareMathOperator{\diam}{diam}

\theoremstyle{plain}
\newtheorem{theorem}{Theorem}[section]
\newtheorem{corollary}[theorem]{Corollary}



\begin{document}


\title{Poincar\'e inequalities, embeddings, and wild groups}
\author{ Assaf Naor}\thanks{A.N. is supported in part by
NSF grants CCF-0635078 and CCF-0832795, BSF grant 2006009, and the
Packard Foundation.}
\address{Courant Institute of Mathematical Sciences, New York University, 251 Mercer Street, New York NY 10012, USA.}
\email{naor@cims.nyu.edu}
\author{Lior Silberman}\thanks{This work was
completed when L.S. was an intern at Microsoft Research, Redmond WA,
June--August 2004. He wishes to thank Microsoft Research for their
hospitality.}
\address{Department of Mathematics, University of British Columbia, 1984 Mathematics Road, Vancouver  BC   V6T 1Z2, Canada.}
\email{lior@math.ubc.ca}

\subjclass[2010]{20F65,58E40,46B85}
\keywords{Gromov's random groups, fixed points, Poincar\'e inequalities}

\begin{abstract}
We present geometric conditions on a metric space $(Y,d_Y)$ ensuring that almost surely, any isometric action on $Y$ by Gromov's expander-based random group has a common fixed point. These geometric conditions involve uniform convexity and the validity of nonlinear Poincar\'e inequalities, and they are stable under natural operations such as scaling, Gromov-Hausdorff limits, and Cartesian products. We use methods from metric embedding theory to establish the validity of these conditions for a variety of classes of metric spaces, thus establishing new fixed point results for actions of Gromov's ``wild groups".
\end{abstract}

\date{}
\maketitle

\section{Introduction}\label{sec:intro}

We establish the existence of finitely generated groups with strong
fixed point properties. The seminal work on this topic is Gromov's
construction~\cite{Gromov:RandGp} of random groups from expander
graph families, leading to a solution~\cite[Sec.\
7]{HigsonLafforgueSkandalis:BC-CounterEx} of the Baum-Connes
conjecture for groups, with coefficients in commutative
$C^*$-algebras. Here we study Gromov's construction, highlighting
the role of the geometry of the metric space on which the group
acts. As a result, we isolate key properties of the space acted upon
that imply that any isometric action of an appropriate random group
has a common fixed point. Using techniques from the theory of metric
embeddings in order to establish these properties, we obtain new
fixed point results for a variety of spaces that will be described
below. This answers in particular a question of
Pansu~\cite{Pansu:EinGedi} (citing Gromov).  In fact, we prove the
stronger statement that for every
Euclidean building $B$ (see~\cite{KL97}), there exists a torsion-free
hyperbolic group for which every isometric action on $\ell_2(B)$ has a common fixed point (this statement extends to appropriate $\ell_2$ products of more than one building). Thus, following Ollivier's
terminology~\cite{Ollivier:RandGpSurvey}, Gromov's groups are even
``wilder" than previously shown.

For $p\ge 1$ say that a geodesic metric space $(Y,d_Y)$ is
$p$-uniformly convex if there exists a constant $c>0$ such that for
every $x,y,z \in Y$, every geodesic segment $\gamma:[0,1]\to  Y$
with $\gamma(0)=y$, $\gamma(1)=z$, and every $t\in[0,1]$ we have:
\begin{equation}\label{eq:conv intro}
\dY(x,\gamma(t))^p \leq (1-t)\dY(x,y)^p + t\dY(x,z)^p - c
t(1-t)\dY(y,z)^p.
\end{equation}
It is immediate to check that~\eqref{eq:conv intro} can hold only
for $p\ge 2$. The inequality~\eqref{eq:conv intro} is an obvious
extension of the classical notion of $p$-uniform convexity of Banach
spaces (see, e.g., \cite{BCL94}), and when $p=2$ it is an extension
of the $\mathrm{CAT}(0)$ property (see, e.g.,
\cite{BridsonHaelfliger:NonPosSpc}).

We shall say that a  metric space $(Y,d_Y)$ admits a sequence of
\emph{high girth $p$-expanders} if there exists $k\in \mathbb N$,
$\gamma,\eta>0$, and a sequence of $k$-regular finite graphs
$\{G_n=(V_n,E_n)\}_{n=1}^\infty$ with $\lim_{n\to
\infty}|V_n|=\infty$ such that the length of the shortest non-trivial closed
path (the ``girth'') in $G_n$ is at least $\eta\log |V_n|$, and such that
for every $f:V_n\to Y$ we have,
\begin{equation}\label{eq:def poin intro}
\frac{1}{|V_n|^2} \sum_{u,v\in V_n}
d_Y(f(u),f(v))^p\le\frac{\gamma}{|E_n|}\sum_{uv\in E_n}
d_Y(f(u),f(v))^p.
\end{equation}
When $Y=\R$ it is well-known that inequality \eqref{eq:def poin
intro} with $p=2$ is equivalent to the usual notion of combinatorial
expansion (for a survey on expander graphs see
\cite{HooryLinialWigderson:ExpanderSurvey}, especially Section 2).
It is less well-known~\cite{Mat97} that this is true for all
$1<p<\infty$; we reproduce the proof in Lemma \ref{lem:Matousek}. It
is also worth noting that unless $Y$ consists of a single point, the
sequence of graphs considered must necessarily be a sequence of
combinatorial expanders.

As we shall see later, a large class of metric spaces of
interest consists of spaces that
are both $p$-uniformly convex and admit a sequence of high girth
$p$-expanders. In fact, in all cases that we study, the Poincar\'e
inequality~\eqref{eq:def poin intro} holds for {\em every} sequence
of combinatorial expanders. It is an open problem whether the
existence of a sequence of bounded degree graphs
satisfying~\eqref{eq:def poin intro} implies the same conclusion for
all combinatorial expanders, but we will not deal with this issue
here as the existence statement suffices for our purposes.

Gromov's remarkable construction~\cite{Gromov:RandGp} of random
groups is described in detail in Section~\ref{sec:graphmodel}. In
order to state our results, we briefly recall it here. Given a
(possibly infinite) graph $G=(V,E)$, and integers $j,d\in \mathbb
N$, a probability  distribution over groups $\Gamma$ associated to
$G$ and generated by $d$ elements $s_1,\ldots,s_d$ is defined as
follows. Orient the edges of $G$ arbitrarily. For every edge $e\in
E$ choose a word $w_e$ of length $j$ in $s_1,\ldots,s_d$  and their
inverses uniformly at random from all such $(2d)^j$ words, such that
the random variables $\{w_e\}_{e\in E}$ are independent. Each cycle
in $G$ induces a random relation obtained by traversing the cycle,
and for each edge $e$ of the cycle, multiplying by either $w_e$ or
$w_e^{-1}$, depending on whether $e$ is traversed according to its
orientation or not. These relations induce the random group
$\Gamma=\Gamma(G,d,j)$.

Our main result is:
\begin{thm}\label{thm:main intro}
Assume that a geodesic metric space $(Y,d_Y)$ is $p$-uniformly
convex and admits a sequence of high girth $p$-expanders
$\{G_n=(V_n,E_n)\}_{n=1}^\infty$. Then for all $d\ge 2$ and $j\ge 1$
 with probability tending to
$1$ as $n\to \infty$, any isometric action of the group
$\Gamma(G_n,d,j)$ on $Y$ has a common fixed point.
\end{thm}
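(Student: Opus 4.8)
The plan is to combine a fixed-point criterion for random groups with the Poincaré inequality on $Y$, using the high-girth hypothesis to control how relations ``label'' the expander. The starting point is the standard observation (going back to Gromov, and made precise in the Izeki--Nayatani / Gromov framework) that if an isometric action of a group $\Gamma=\langle s_1,\dots,s_d\rangle$ on $Y$ has no common fixed point, then one can produce a ``harmonic'' or almost-harmonic equivariant map, and the energy of this map can be estimated against a Poincaré-type quantity built from the defining relations. Concretely, I would first set up the following: for any $1$-Lipschitz-in-each-generator map $\phi\colon \Gamma\to Y$ (e.g.\ an orbit map $\gamma\mapsto \gamma x_0$), and for a given cycle/relation $r$ in the presentation, the displacement of $\phi$ along the corresponding closed path in the Cayley graph is zero, which forces a combinatorial Poincaré inequality to interact with the action. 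The key reduction is to transfer the Poincaré inequality \eqref{eq:def poin intro} on $G_n$ to a Poincaré inequality on the Cayley graph of $\Gamma(G_n,d,j)$, via the random labeling.

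The heart of the argument is the probabilistic step. Fix $d\ge 2$, $j\ge 1$. The random group $\Gamma(G_n,d,j)$ comes with a natural map $\Psi_n\colon V_n\to \Gamma(G_n,d,j)$ obtained by reading off the random words $w_e$ along a spanning structure of $G_n$ (fix a root $v_0\in V_n$, and for $v\in V_n$ let $\Psi_n(v)$ be the product of the $w_e^{\pm1}$ along a chosen path from $v_0$ to $v$). Because each cycle of $G_n$ gives a relation, $\Psi_n$ is well-defined up to the relations, i.e.\ it descends to a genuine map into $\Gamma(G_n,d,j)$, and for each edge $uv\in E_n$ the elements $\Psi_n(u),\Psi_n(v)$ differ by a single random word $w_{uv}$ of length $j$. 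Given any isometric action of $\Gamma(G_n,d,j)$ on $Y$ without a global fixed point, pick a point $x\in Y$ and consider $f=f_x\colon V_n\to Y$ given by $f(v)=\Psi_n(v)\cdot x$. Then $d_Y(f(u),f(v))=d_Y(w_{uv}\cdot x,\, x)\le j\cdot \max_i d_Y(s_i x,x)$ for each edge, so the right-hand side of \eqref{eq:def poin intro} is at most $\gamma\, j^p \max_i d_Y(s_i x, x)^p$. On the other hand, the left-hand side of \eqref{eq:def poin intro} is the average squared distance between the points $\{\Psi_n(v)x\}$. To get a contradiction one wants to choose $x$ so that these points are genuinely spread out — this is where $p$-uniform convexity enters: it guarantees the existence of (approximate) barycenters and lets one run the averaging/variance argument that is standard in the $\mathrm{CAT}(0)$ case (e.g.\ the Wang-type or Gromov-type fixed point argument), producing from a spread-out orbit a point whose displacement under some generator is strictly smaller, hence by iteration a sequence of almost-fixed points, and finally, using completeness of (a suitable subset of) $Y$ and $p$-uniform convexity, an honest fixed point.

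The role of \textbf{high girth} — and this is the step I expect to be the main obstacle — is that the naive map $\Psi_n$ above is only well-defined if one is careful about which relations are imposed. With high girth $\ge \eta\log|V_n|$ and words of fixed length $j$, any relation of $\Gamma(G_n,d,j)$ coming from a short cycle is a product of $\le$(girth)$\,\cdot\,j = O(\log|V_n|)$ generators, so it is ``visible'' only at logarithmic scale; local injectivity of $\Psi_n$ on balls of radius $\sim \frac{\eta}{2}\log|V_n|$ holds with high probability by a first-moment/union-bound computation over the $(2d)^{j|E_n|}$ choices of words (the probability that a fixed nontrivial short cycle becomes trivial is $(2d)^{-j}$ per edge freedom, and there are subexponentially many short cycles). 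This local injectivity is exactly what is needed to (a) ensure $\Gamma(G_n,d,j)$ is genuinely infinite/nonelementary rather than collapsing, and (b) push the Poincaré inequality on $G_n$ forward to a nontrivial constraint on the action: one decomposes $V_n$ into pieces on which $\Psi_n$ is injective, applies \eqref{eq:def poin intro} and the edge bound, and uses the spectral-gap/expansion consequence of \eqref{eq:def poin intro} (Lemma \ref{lem:Matousek}) to conclude that the orbit $\{\Psi_n(v)x\}$ cannot be too concentrated unless $\max_i d_Y(s_i x,x)$ is forced down. Chaining this with the convexity-based descent yields: with probability $\to 1$, for \emph{every} isometric action there is a common fixed point. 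I would organize the write-up as: (1) the deterministic ``no fixed point $\Rightarrow$ spread-out orbit with small edge displacement'' lemma using $p$-uniform convexity; (2) the probabilistic local-injectivity lemma from high girth; (3) transfer of \eqref{eq:def poin intro} through $\Psi_n$; (4) combine (1)--(3) for the contradiction, then upgrade almost-fixed points to a true fixed point.
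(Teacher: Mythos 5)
There is a genuine gap, and it sits exactly where you flag it yourself: the ``convexity-based descent.'' Applying \eqref{eq:def poin intro} to $f(v)=\Psi_n(v)\cdot x$ is a correct (and purely deterministic) step, and your map $\Psi_n$ is the paper's $\alpha_{u\to x}$; but the inequality it yields, namely that the spread of the orbit points $\{\Psi_n(v)x\}$ is at most $\gamma j^p$ times the generator displacement at $x$, points in the wrong direction. A fixed-point-free action does not force the $\Psi_n$-orbit of a (near-)minimal-displacement point to be spread out, so there is no immediate contradiction; to get one you must show that passing to a barycenter of the orbit strictly decreases displacement (or energy) by a definite factor. A one-step barycenter descent of the kind you sketch only works when the Poincar\'e constant is small (the \.Zuk/Izeki--Nayatani condition, $\gamma<2$ in the $p=2$ normalization), and the theorem makes no such assumption --- the paper is explicit that its method must be insensitive to $\gamma$. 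What the paper actually does is separate scales: it shows (Lemma~\ref{lem:avg-ind-walk}, Proposition~\ref{pro:transf-gap}, and the Azuma-based concentration proposition) that, with high probability in $\alpha$, the $q$-step walk on $G_n$ pushed into the Cayley graph simulates convolution powers $\mu_X^{jl}$ with $l\asymp\log N$, so the expander Poincar\'e inequality transfers to an estimate comparing the energy of an equivariant map at scale $jm$, $m\asymp\log N$, with its energy at scale $j$, with constant $\Lambda^p$ independent of $N$ (or growing slowly). Combined with Theorem~\ref{thm:poincare-to-avg}, whose gain $\sqrt{\log n/n}$ beats any fixed $\Lambda$ once $n\asymp\log N$ is large, this gives contraction of energy by a factor $\tfrac12$ under averaging over the long walk. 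This two-scale mechanism is the heart of the proof and is absent from your outline; note also that the probabilistic ingredient needed is this concentration of the simulated walk about its mean (high girth enters only to make paths of length $<g/2$ reduce uniquely, so the mean walk is a combination of genuine convolution powers concentrated on $l>q/6$), not local injectivity of $\Psi_n$, which the fixed-point statement never requires.

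The final step of your plan is also not valid as stated: ``a sequence of almost-fixed points plus completeness and uniform convexity gives an honest fixed point'' is false in general (fixed-point-free isometric actions can have points of arbitrarily small displacement, e.g.\ affine actions of amenable groups on Hilbert space). What actually closes the argument in the paper is quantitative: the energy decays geometrically along the iteration, Lemma~\ref{lem:avg-control} bounds the distance moved in each averaging step by a power of the current energy, hence the iterates form a Cauchy sequence of equivariant maps whose limit has zero energy and is therefore constant, giving the fixed point (Proposition~\ref{pro:iter-fp}; one also needs the small connectivity point about $S^j\supseteq S^2$ handled via non-bipartiteness). So the skeleton (1)--(4) you propose cannot be completed as written without either importing a smallness assumption on $\gamma$ that the theorem does not make, or rebuilding the long-random-walk averaging and energy-contraction machinery that constitutes the paper's actual proof.
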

It was shown in~\cite{Gromov:RandGp,Ollivier:GraphRandGp,ArzhantsevaDelzant:ExampleRandGps_preprint}
that for every $d\ge 2$, for large enough $j$ (depending only on $d$ and the
parameters $k,\eta$), the group $\Gamma(G_n,d,j)$ is torsion-free
and hyperbolic with probability tending to $1$ as $n\to \infty$.

 Using a variety of results
and techniques from the theory of metric embeddings, we present a
list of metric spaces $(Y,d_Y)$ for which the conditions of
Theorem~\ref{thm:main intro} are satisfied\footnote{Note that our
conditions on the metric space $(Y,d_Y)$ in Theorem~\ref{thm:main
intro} are closed under  $\ell_p$ sums $(\oplus_{s=1}^N Y_s)_p$,
provided that in~\eqref{eq:def poin intro}, the same high-girth
expander sequence works for all the $Y_s$. This holds true in all
the examples that we present, for which~\eqref{eq:def poin intro} is
valid for every connected graph, with $\gamma$ depending only on
$p$, the spectral gap of the graph, and certain intrinsic geometric
parameters of $Y$.}. These spaces include all Lebesgue spaces
$L_q(\mu)$ for $1<q<\infty$, and more generally all Banach lattices
which are $p$-uniformly convex for some $p\in [2,\infty)$. Moreover,
they include all (possibly infinite dimensional) Hadamard manifolds
(in which case $p=2, c=1$), all Euclidean buildings ($p=2, c=1$,
again), and all $p$-uniformly convex Gromov hyperbolic  metric
spaces of bounded local geometry. It was asked by Pansu
in~\cite{Pansu:EinGedi} whether for every symmetric space or
Euclidean building an appropriate random group has the fixed point
property. Our results imply that this is indeed the case. As a
corollary, by a ``gluing" construction of~\cite{ABJLMS09} (see
also~\cite[Sec.\ 3.3]{FisherSilberman:Noaction}) it follows that
there exists a torsion-free group that has the fixed point property
with respect to {\em all} the spaces above. This yields one
construction of ``wild groups". Alternatively, one could follow the
original approach of Gromov~\cite{Gromov:RandGp}, who considers the
group $\Gamma=\Gamma(G,d,j)$, where the graph $G$ is the disjoint
union of an appropriate subsequence of the expanders
$\{G_n\}_{n=1}^\infty$ from Theorem~\ref{thm:main intro}, which is a
torsion-free group with positive
probability~\cite{Gromov:RandGp,Ollivier:GraphRandGp,ArzhantsevaDelzant:ExampleRandGps_preprint}. For this
group $\Gamma$, Pansu asked~\cite{Pansu:EinGedi} whether it has the
fixed point property with respect to {\em all} symmetric spaces and
all buildings of type $\tilde A_n$. Our result implies that for
every $d,j$, almost surely $\Gamma$ will indeed have this fixed
point property, and also on all $\ell_2$ products of such spaces.

\begin{thm}\label{thm:main also} Let $G$ be the disjoint union of
a family of high-girth combinatorial expanders (that is, of a family
of graphs for which a single $\gamma$ applies in~\eqref{eq:def poin
intro} for all $\R$-valued functions).  Let $d\geq 2$ and $j\geq 1$.
Then with probability $1$ the group $\Gamma(G,d,j)$ has the
fixed-point property for isometric actions on all $p$-uniformly
convex Banach Lattices, all buildings associated to linear groups,
all non-positively curved symmetric spaces, and all $p$-uniformly
convex Gromov hyperbolic spaces.  The fixed-point property also
holds for an $\ell_p$-product of $p$-uniformly convex spaces, as
long as the constant in \eqref{eq:def poin intro} is uniformly
bounded for these spaces.
\end{thm}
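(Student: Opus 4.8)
The plan is to deduce Theorem~\ref{thm:main also} from Theorem~\ref{thm:main intro} together with the list of examples, so the only genuinely new ingredient is passing from a single expander sequence to the disjoint union $G=\bigsqcup_n G_n$. First I would observe that for each class of target spaces $Y$ under consideration---$p$-uniformly convex Banach lattices, Euclidean buildings for linear groups, non-positively curved symmetric spaces, and $p$-uniformly convex Gromov hyperbolic spaces of bounded geometry---the results established earlier in the paper give us two facts simultaneously: $(Y,d_Y)$ is $p$-uniformly convex (for a fixed $p$ and fixed constant $c$ depending only on the class), and the Poincar\'e inequality~\eqref{eq:def poin intro} holds for \emph{every} connected graph, with the constant $\gamma$ depending only on $p$, the spectral gap of the graph, and intrinsic parameters of $Y$ (this uniformity is exactly what the footnote to Theorem~\ref{thm:main intro} asserts, and it is what distinguishes these examples). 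Hence, applying this to the high-girth combinatorial expanders $G_n$ whose disjoint union is $G$, and using that a single $\gamma$ works for all of them since they share a common spectral gap bound, we conclude that $(Y,d_Y)$ admits the sequence $\{G_n\}$ as high-girth $p$-expanders in the sense of~\eqref{eq:def poin intro}.

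Next I would address the disjoint-union construction itself. The random group $\Gamma(G,d,j)$ built from $G=\bigsqcup_n G_n$ has, among its relators, all the relators coming from each finite piece $G_n$; more precisely, the restriction of the random labeling to $G_n$ has exactly the same law as the labeling used to define $\Gamma(G_n,d,j)$, and these restrictions are independent across $n$. By Theorem~\ref{thm:main intro}, for each fixed $n$ the event $A_n$ that every isometric action of $\Gamma(G_n,d,j)$ on $Y$ has a common fixed point has probability tending to $1$; since any relation valid in $\Gamma(G_n,d,j)$ is also valid in the quotient $\Gamma(G,d,j)$, on the event $A_n$ every isometric action of $\Gamma(G,d,j)$ on $Y$ already has the fixed-point property. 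By independence, $\Pr[\bigcup_n A_n] = 1 - \prod_n (1-\Pr[A_n]) = 1$ because $\Pr[A_n]\to 1$. Thus with probability $1$ the group $\Gamma(G,d,j)$ has the fixed-point property for isometric actions on $Y$.

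The $\ell_p$-product statement then follows from the closure property recorded in the footnote to Theorem~\ref{thm:main intro}: if $Y_1,Y_2,\dots$ are $p$-uniformly convex with constant $c$ and the same expander sequence $\{G_n\}$ witnesses~\eqref{eq:def poin intro} for all of them with a common $\gamma$, then $(\bigoplus_s Y_s)_p$ is $p$-uniformly convex (by the elementary fact that~\eqref{eq:conv intro} passes to $\ell_p$ sums, applied coordinatewise and summed) and the same $\{G_n\}$ satisfies~\eqref{eq:def poin intro} for it, again by summing the coordinatewise inequalities; so Theorem~\ref{thm:main intro} applies directly and the argument of the previous paragraph gives the probability-$1$ conclusion. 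The main point requiring care---and the step I would dwell on---is the uniformity of $\gamma$ across each family of spaces: one must check, for each class in the list, that the Poincar\'e constant extracted from the embedding-theoretic arguments earlier in the paper genuinely depends only on the spectral gap and on geometric invariants held fixed within the class, and not on, say, the dimension of the building or the manifold. Everything else is either a direct appeal to Theorem~\ref{thm:main intro} or the routine Borel--Cantelli-type computation above.
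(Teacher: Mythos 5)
There is a genuine gap, and it is in the quantifier order over the target space $Y$. Your Borel--Cantelli step is fine for a \emph{fixed} $Y$: the events $A_n$ depend only on the independent restrictions $\alpha\restrict_{G_n}$, each finite sample space makes them measurable, $\Pr[A_n]\to 1$ by Theorem~\ref{thm:main intro}, and since $\Gamma(G,d,j)$ is a further quotient of $\Gamma(G_n,d,j)$, a fixed point for the pulled-back action gives one for the original. But this produces, for each $Y$, a full-measure event \emph{depending on $Y$}, i.e.\ it proves ``for every $Y$ in the class, almost surely $\Gamma(G,d,j)$ has the fixed-point property on $Y$''. Theorem~\ref{thm:main also} asserts the interchanged statement: a single probability-$1$ event on which the fixed-point property holds for \emph{all} $Y$ in the listed classes simultaneously. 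These classes are uncountable (all $p$-uniformly convex Banach lattices, all bounded-geometry $p$-uniformly convex hyperbolic spaces, arbitrary $\ell_p$-products), so you cannot intersect the $Y$-dependent full-measure events, and nothing in the statement of Theorem~\ref{thm:main intro} lets you choose the good event independently of $Y$.

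This is exactly the issue the paper's proof is built to avoid, and in fact its logical order is the reverse of yours: the almost-sure event is purely combinatorial and makes no reference to $Y$. Theorem~\ref{thm:transfer} shows (via Azuma's inequality and independence of the labelings on the disjoint components, i.e.\ a Borel--Cantelli argument at the level of the ``effective simulation'' of the tree walks $\muX^n$ by the induced walks $\muqGa$) that almost surely infinitely many $G_i$ transfer their Poincar\'e inequality to $X_\alpha$; then, deterministically on that single event, Theorem~\ref{thm:techthm} combines the transferred inequality with Theorem~\ref{thm:poincare-to-avg} to produce a fixed point for \emph{every} $p$-uniformly convex $Y$ with small Poincar\'e moduli and every isometric action (the choice of the good scale $N_i$ is allowed to depend on $Y$ and $f$, which is harmless since arbitrarily large good $N_i$ exist on the a.s.\ event). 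Theorem~\ref{thm:main intro} is then obtained by running the same argument on each $G_i$ separately, not the other way around. Relatedly, your emphasis on uniformity of the Poincar\'e constant $\gamma$ across an entire class (e.g.\ over all buildings regardless of dimension) is not what is needed: the constant may depend on $Y$ (it does, through Nagata dimension or convexity constants); uniformity is only required across the factors of a single $\ell_p$-product. Your proposal could be repaired by observing that the probability-$1$ event implicit in the proof of Theorem~\ref{thm:main intro} can be taken $Y$-independent---but that observation is precisely the content of Theorems~\ref{thm:transfer} and~\ref{thm:techthm}, so the deduction cannot proceed from the statement of Theorem~\ref{thm:main intro} alone.
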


Problems similar to those studied here were also investigated
in~\cite{IzekiNayatani:HarmonicMaps,IzekiNayatni:FP_RandGp}, where
criteria were introduced that imply fixed point properties of random
groups in \.Zuk's triangular model~\cite{Zuk:ShortRelsPropT}.
These criteria include a Poincar\'e-type inequality similar
to~\eqref{eq:def poin intro}, with the additional requirement that the
constant $\gamma$ is small enough (in our normalization, they require
$p=2$ and $\gamma<2$).  Unfortunately, it is not known whether it is
possible to establish such a strong Poincar\'e inequality for
the spaces studied here, except for $\mathrm{CAT}(0)$ manifolds, trees,
and a specific example of an $\tilde A_2$ building
(see~\cite{IzekiNayatani:HarmonicMaps,IzekiNayatni:FP_RandGp}).
Our approach is insensitive to the exact value of $\gamma$
in~\eqref{eq:def poin intro}.  In fact, $\gamma$ can be allowed to grow to
infinity with $|V_n|$; see~\eqref{eq:1/4} and Theorem~\ref{thm:techthm} below.

It was shown in~\cite{Pansu:Sp_n1_FLp_preprint} that any cocompact
lattice $\G$ in $\Sp_{n,1}(\R)$ admits a fixed-point-free action by
linear isometries on $L_p$ for any $p\geq 4n+1$. Also, $\Gamma$ acts
by isometries on the symmetric space of $\Sp_{n,1}(\R)$ (which is a
Hadamard manifold) without fixed points. Thus, while it is
known~\cite{Gromov:RandGp,Silberman:RandGpT} that Gromov's random
groups have property $(T)$, our results do not from follow from
property $(T)$ alone.
See~\cite{FisherMargulis:PropTRigid,BaderFurmanGelanderMonod:LpPropT}
for a discussion of the relation between property $(T)$ and fixed
points of actions on $L_p$.

We end this introduction by noting that the above gluing-type
construction based on Theorem~\ref{thm:main intro} yields a
non-hyperbolic group. This is necessary, since it was shown
in~\cite{Yu:HypGpProperActLp} that any hyperbolic group admits a
proper (and hence fixed-point free) isometric action on an
$L_p(\mu)$ space for $p$ large enough. It remains open  whether
there exists a \emph{hyperbolic} group with the fixed-point property
on all symmetric spaces and Euclidean buildings. Such a group would
have no infinite linear images. (This is related to the well known
problem of the existence of a hyperbolic group which is not
residually finite.)

\medskip

\noindent{\bf Overview of the structure of this paper.} In
Section~\ref{sec:fixed-pts} we recall some background on fixed point
properties of groups, and how they are classically proved. The
natural approach to finding a fixed point from a bounded orbit by
considering the average (or center of mass) of the orbit requires
appropriate definitions in general uniformly convex metric spaces;
this is discussed in Section~\ref{sec:averaging}. But, in our
situation orbits are not known to be bounded, so the strategy is to
average over certain bounded subsets of an orbit. The hope is that
by iterating this averaging procedure we will converge to a fixed
point. It turns out that this approach works in the presence of
sufficiently good Poincar\'e inequalities; this is explained in
Section~\ref{sec:mainthm}, a key technical tool being
Theorem~\ref{thm:poincare-to-avg} (before reading
Section~\ref{sec:mainthm}, readers should acquaint themselves with
the notations and definitions of Section~\ref{sec:graphmodel}, which
recalls Gromov's construction of random groups). We prove the
desired Poincar\'e inequalities (in appropriate metric spaces) via a
variety of techniques from the theory of metric embeddings;
Section~\ref{sec:poincare} and Section~\ref{sec:nagata} are devoted
to this topic.

\medskip
\noindent{\bf Asymptotic notation.} We use $A \lesssim B$, $B
\gtrsim A$ to denote the estimate $A \leq CB$ for some absolute
constant $C$; if we need $C$ to depend on parameters, we indicate
this by subscripts, thus $A \lesssim_p B$ means that $A \leq C_p B$
for some $C_p$ depending only on $p$. We shall also use the notation
$A\asymp B$ for $A\lesssim B\ \wedge\ B\lesssim A$.

\section{Background on fixed point properties of groups}\label{sec:fixed-pts}
We start by setting some terminology.
\begin{defn} Let $\G$ be a finitely generated group, let $(Y,d_Y)$ be a metric space, and let
$\rho\colon\G\to\Isom(Y)$ be an action by isometries. We say that
the action satisfies the condition:
\begin{list}{}{}
\item[(N),] if the image $\rho(\G)$ is finite;
\item[(F),] if the image $\rho(\G)$ has a common fixed point;
\item[(B),] if some (equiv.\ every) $\G$-orbit in $Y$ is bounded.
\end{list}
For a class $\mathcal{C}$ of  metric spaces, we say that $\G$ has
property \NC, \FC\ or \BC\ if every action
$\rho\colon\Gamma\to\Isom(Y)$, where $Y\in\mathcal{C}$, satisfies
the respective condition.
\end{defn}
The Guichardet-Delorme Theorem
\cite{Guichardet:FP_2_T,Delorme:T_2_FP} asserts that if $H$ is
Hilbert space then $\G$ has property (FH) if and only if it has
Kazhdan property $(T)$. The reader can take this as the definition
of property $(T)$ for the purpose of this paper.

Fixed-point properties can have algebraic implications for the
group's structure.  For example, finitely generated linear groups
have isomorphic embeddings into linear groups over local fields, and
these latter groups act by isometries on non-positively curved
spaces with well-understood point stabilizers. For completeness and
later reference, we include the following simple lemma.

\begin{lem}[Strong non-linearity]\label{lem:nonlinear}
Let $\mathcal{S}$ be the class of the symmetric spaces and buildings
associated to the groups $\GL_n(F)$, where $F$ is a non-Archimedean
local field. Let $\G$ be a finitely generated group with property
\FS. Then any homomorphic image of $\G$ into a linear group is
finite.
\end{lem}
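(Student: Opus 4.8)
The plan is to reduce the statement to the classical fact that a finitely generated linear group, having embeddings into linear groups over local fields, acts on products of symmetric spaces and Euclidean buildings of the type captured by the class $\mathcal{S}$; property \FS\ then forces such an action to have a fixed point, and one must argue that the existence of a fixed point for the relevant action is incompatible with the homomorphic image being infinite. First I would reduce to the following situation: let $\ph\colon\G\to H$ be a homomorphism with $H$ a linear group, say $H\le\GL_n(K)$ for some field $K$; after replacing $H$ by $\ph(\G)$ we may assume $H$ is finitely generated. By a standard specialization argument (the Malcev/selfgenerated-field trick), it suffices to consider the case where $K$ is finitely generated over its prime field, and then, using the fact that any infinite finitely generated linear group admits a homomorphism with infinite image into $\GL_n(\mathbb{F})$ for some \emph{local} field $\mathbb{F}$ (via an appropriate embedding of a finitely generated subfield into a local field, sending some infinite-order or unbounded element to an unbounded one), we may assume $H\le\GL_n(\mathbb{F})$ with $\mathbb{F}$ non-Archimedean local.

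Next I would invoke the action of $\GL_n(\mathbb{F})$ on the Bruhat--Tits building $B_n(\mathbb{F})$ (together with the Euclidean factor coming from $\det$, or equivalently restrict to $\mathrm{PGL}_n$ and handle the center separately): this is an isometric action on a $\mathrm{CAT}(0)$ space belonging (by definition) to the class $\mathcal{S}$. Pulling back along $\ph$ gives an isometric $\G$-action on a space in $\mathcal{S}$, so by property \FS\ it has a common fixed point $x$. The point stabilizers of $\GL_n(\mathbb{F})$ acting on $B_n(\mathbb{F})$ are, up to the center and finite index, conjugates of the maximal compact subgroup $\GL_n(\mathcal{O}_\mathbb{F})$ — in particular they are compact, hence bounded. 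Therefore $\ph(\G)$ is contained in a bounded subgroup of $\GL_n(\mathbb{F})$; every bounded subgroup of $\GL_n(\mathbb{F})$ is contained in a maximal compact subgroup and, being a closed? — here I instead argue directly: a \emph{finitely generated} subgroup of $\GL_n(\mathbb{F})$ which is bounded and thus has relatively compact closure is, being discrete if it were infinite, forced to be finite — more precisely, if $\ph(\G)$ were infinite then, being finitely generated and linear over $\mathbb{F}$, by Tits' alternative it would contain either a free subgroup (which cannot be bounded, since free groups act with unbounded orbits on the building's apartments via an appropriate choice of $\mathbb{F}$ — this is the delicate point) or a finite-index solvable subgroup with an infinite abelian, hence unbounded, subquotient realized by a diagonalizable element of infinite order lying outside any compact subgroup.

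The main obstacle, and the step requiring care, is precisely the passage from "$H$ infinite and finitely generated linear" to "$H$ has an isometric action with unbounded orbits on a space in $\mathcal{S}$": one must choose the local field $\mathbb{F}$ and the embedding so that some infinite-order element of $H$ becomes \emph{unbounded} (i.e.\ not contained in a compact subgroup), which is exactly where one needs the valuation to "see" the infinitude of $H$. This is handled by the classical argument: take a finitely generated field $K_0$ over which $H$ is defined; if $H$ is infinite then some matrix entry generates a subring that is not integral over $\mathbb{Z}$, so there is a place $v$ of $K_0$ at which that entry is not a $v$-adic integer; completing at $v$ gives the desired non-Archimedean local field $\mathbb{F}$ in which $H$ is unbounded. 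Combining this with the fixed-point-implies-bounded-orbit consequence of \FS\ yields a contradiction unless $H=\ph(\G)$ is finite. I would present the field-theoretic lemma as a black box (it is standard, e.g.\ via Malcev's theorem on residual finiteness of linear groups, or Tits' construction in his proof of the Tits alternative) and spend the bulk of the written proof on the clean geometric implication.
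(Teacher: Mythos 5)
There is a genuine gap, in fact two, and they sit exactly at the point you flagged as delicate. First, your attempt to get a contradiction from ``$\ph(\G)$ is bounded but infinite'' inside a single $\GL_n(\mathbb{F})$ does not work: relative compactness is perfectly compatible with being an infinite, finitely generated, even free subgroup. For example $\mathrm{SL}_2(\Z)\subset \mathrm{SL}_2(\Z_p)$ is an infinite finitely generated (and free-subgroup-containing) subgroup of a compact group, and $\mathrm{SO}(3)$ likewise contains free subgroups; so neither ``bounded $\Rightarrow$ discrete $\Rightarrow$ finite'' nor the Tits-alternative variant (``a free or infinite solvable subgroup cannot be bounded'') is valid. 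Second, the specialization step you rely on --- ``if $H$ is infinite then some matrix entry is non-integral at some place, so $H$ is unbounded in some \emph{non-Archimedean} completion'' --- is false: the infinite group generated by the unipotent matrix $\left(\begin{smallmatrix}1&1\\0&1\end{smallmatrix}\right)$ has all entries in $\Z$ and is bounded in $\GL_2(\mathbb{Q}_p)$ for every $p$; its unboundedness is visible only at the Archimedean place, i.e.\ on the symmetric space. This is precisely why the class $\mathcal{S}$ (and the paper's proof) must include the symmetric spaces of $\GL_n$ over Archimedean fields alongside the Bruhat--Tits buildings, and why no argument confined to non-Archimedean completions can succeed.

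For comparison, the paper's proof is not by contradiction and needs no Tits alternative. It takes $K$ to be the field generated by the matrix entries of the generators, lets $A\subset K$ be the set of matrix entries of \emph{all} elements of $\rho(\G)$, and invokes Lemma~2.1 of Breuillard--Gelander: $A$ is finite provided its image is relatively compact under \emph{every} embedding $\iota\colon K\to F$ into \emph{every} local field $F$, Archimedean or not. For each such embedding, property \FS\ applied to the action of $\G$ on the symmetric space (if $F$ is Archimedean) or the building (if $F$ is non-Archimedean) of $\GL_n(F)$ puts $\iota(\rho(\G))$ inside a compact point stabilizer, giving exactly the required boundedness of $\iota(A)$; finiteness of $A$, hence of $\rho(\G)$, follows. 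If you want to keep your contrapositive formulation, you would still need this full ``all places, including Archimedean'' lemma to produce, from infiniteness of $\rho(\G)$, a single local-field embedding in which the image is not relatively compact (hence fixes no point); the weaker non-Archimedean-only claim you black-boxed is simply not true.
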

\begin{proof}
Let $\G$ be finitely generated group with property \FS. Let $K$ be a
field, and let $\rho\colon\G\to\GL_n(K)$ be a homomorphism. Without
loss of generality we can assume $K$ to be the field generated by
the matrix elements of the images of the generators of $\G$, and
then let $A\subset K$ be the set of matrix elements of the images of
all elements of $\Gamma$. Clearly $\rho(\G)$ is finite iff $A$ is a
finite set, and \cite[Lem.\ 2.1]{BreuillardGelander:TopTitsAlt}
reduces the finiteness of $A$ to showing that the image of $A$ under
any embedding of $K$ in a  local field $F$ is relatively compact.
Hence, let $\iota\colon K\to F$ be such an embedding.  This induces
a group homomorphism $\GL_n(K)\to\GL_n(F)$ which we also denote
$\iota$. Composing with $\rho$ we obtain a homomorphism
$\iota\circ\rho\colon\G\to\GL_n(F)$.  Now let $S$ be the symmetric
space (if $F$ is Archimedean) or Bruhat-Tits building (if $F$ is
non-Archimedean) associated to $\GL_n(F)$.  Since $\GL_n(F)$ is a
group of isometries of $S$, the image of $\iota\circ\rho$ must fall
in the stabilizer in $\GL_n(F)$ of a point of $S$.  Since these
stabilizers are compact subgroups of $\GL_n(F)$ we are done.
\end{proof}

Lemma~\ref{lem:nonlinear} implies, via our results as stated in the
introduction, that Gromov's wild groups are not isomorphic to linear
groups. Alternatively, this fact also follows from the result
of~\cite{GuentnerHigsonWeinberger:LinearNovikov} that asserts that
any linear group admits a coarse embedding into Hilbert space, while
it was shown in~\cite{Gromov:RandGp} that Gromov's random group does
not admit such an embedding (indeed, this was the original
motivation for Gromov's construction). It also follows from
Lemma~\ref{lem:nonlinear} that all linear homomorphic images of
Gromov's random group are finite. In fact, it was later observed
in~\cite{FisherSilberman:Noaction} that the random group has no
finite images, and hence also no linear images, since finitely
generated linear groups are residually finite.

It is clear that the condition (B) is implied by either condition
(N) or (F).  When $Y$ is complete and $p$-uniformly convex the
converse holds as well (weaker notions of uniform convexity suffice
here). We recall the standard proof of this fact below, since it
illustrates a ``baby version" of the averaging procedure on
uniformly convex spaces that will be used extensively in what
follows.
\begin{lem}[``Bruhat's Lemma'']\label{lem:bruhat}
Let $Y$ be a uniformly convex geodesic metric space. Then the
condition (B) for isometric actions on $Y$ implies condition (F).
\end{lem}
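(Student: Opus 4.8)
The plan is to use the standard "center of mass" (circumcenter / barycenter) construction, which works because uniform convexity forces a unique minimizer of the radius function of any bounded set, and this minimizer is canonical, hence fixed by every isometry preserving the set.

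First I would take an arbitrary bounded orbit $\mathcal{O} = \overline{\rho(\Gamma) \cdot y_0}$ for some $y_0 \in Y$; by condition (B) this is a bounded subset of $Y$, which we may assume is complete (or pass to its completion, noting that isometries extend). Define the function $r \colon Y \to [0,\infty)$ by $r(x) = \sup_{z \in \mathcal{O}} d_Y(x,z)$, and let $R = \inf_{x \in Y} r(x)$ be the circumradius. I would then show that $r$ attains the value $R$ at a \emph{unique} point $x^*$, the circumcenter. Existence follows by taking a minimizing sequence $(x_n)$ with $r(x_n) \to R$: using $p$-uniform convexity (inequality~\eqref{eq:conv intro}) applied with $t = 1/2$ along the geodesic between $x_m$ and $x_n$, for any $z \in \mathcal{O}$ one gets $d_Y(w_{mn}, z)^p \le \tfrac12 d_Y(x_m,z)^p + \tfrac12 d_Y(x_n,z)^p - \tfrac{c}{4} d_Y(x_m,x_n)^p$ where $w_{mn}$ is the midpoint; taking the sup over $z$ and using $r(x_m), r(x_n) \to R$ forces $d_Y(x_m,x_n) \to 0$, so $(x_n)$ is Cauchy, and completeness gives a limit $x^*$ with $r(x^*) = R$ (by continuity of $r$, which is $1$-Lipschitz). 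The same midpoint estimate applied to two exact minimizers shows the minimizer is unique: if $r(x^*) = r(x^{**}) = R$ then the midpoint $w$ satisfies $r(w)^p \le R^p - \tfrac{c}{4} d_Y(x^*, x^{**})^p$, forcing $x^* = x^{**}$.

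Finally I would observe that the circumcenter $x^*$ is canonically attached to the set $\mathcal{O}$: for any $g \in \Gamma$, the isometry $\rho(g)$ permutes $\mathcal{O}$ (since $\mathcal{O}$ is the closure of a $\Gamma$-orbit, it is $\rho(\Gamma)$-invariant), hence $r(\rho(g) x) = r(x)$ for all $x$, so $\rho(g) x^*$ is again a minimizer of $r$; by uniqueness $\rho(g) x^* = x^*$. Thus $x^*$ is a common fixed point and condition (F) holds.

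The only genuine subtlety — the \textbf{main obstacle} — is the passage to the completion and the verification that $p$-uniform convexity along \emph{geodesics} is exactly what powers the midpoint contraction estimate; in a geodesic space the midpoint $w_{mn}$ of $x_m$ and $x_n$ exists, and~\eqref{eq:conv intro} with $x = z$, $y = x_m$, $z = x_n$, $t = 1/2$ gives precisely the quadratic-type gain $-\tfrac{c}{4} d_Y(x_m,x_n)^p$ needed. One should also note that the completion of a $p$-uniformly convex geodesic space is again $p$-uniformly convex and geodesic, so the construction is legitimate; everything else is routine. (This is exactly the "baby version" of the iterated-averaging scheme used later, so I would phrase the proof to emphasize the template.)
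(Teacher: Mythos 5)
Your proof is correct and follows essentially the same route as the paper: the circumcenter construction, with the $t=\tfrac12$ case of the $p$-uniform convexity inequality forcing minimizing sequences of the radius function to be Cauchy (hence existence and uniqueness of the minimizer), and equivariance of the circumcenter under isometries yielding the fixed point. The only cosmetic difference is your passage to the closure/completion, which the paper sidesteps by working with the bounded orbit directly under a standing completeness assumption.
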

\begin{proof}
To any bounded set $A\subset Y$ associate its \emph{radius function}
$r_A(y) = \sup_{a\in A} d_Y(y,a)$.  For any $a\in A$ and $y_0,
y_1\in Y$ let $y_{1/2}$ be a midpoint of the geodesic segment
connecting them.  By equation~\eqref{eq:conv intro} we have that
$\dY(a, y_{1/2})^p$ is less than the average of $\dY(a,y_0)^p$ and
$\dY(a,y_0)^p$  by a positive quantity depending only on
$d_Y(y_0,y_1)$ and growing with it.  It follows that the diameter of
the set $C_\epsilon\subset Y$ on which $r_A$ exceeds its minimum by
no more than $\epsilon$ goes to zero with $\epsilon$. Since $Y$ is
complete it follows that $r_A(y)$ has a unique global minimizer,
denoted $\ci(A)\in Y$, and called the $\emph{circumcenter}$ of $A$.
Since its definition involved only the metric on $Y$, the
circumcenter map is equivariant under isometries of $Y$.  It follows
that the circumcenter of a bounded orbit for a group action is a
fixed point.
\end{proof}



\section{Averaging on metric spaces}\label{sec:averaging}

We saw above how to find a fixed point from a bounded orbit, by
forming a kind of ``average" (circumcenter) along the orbit.  When
the orbits are not known to be bounded, it is not possible to form
such averages. However, if $\Gamma$ (generated by $S=S^{-1}$) acts
on a $p$-uniformly convex space $Y$, it is possible to average over
small pieces of the orbit: passing from a point $y$ to an
appropriately defined average of the finite set $\{ sy \}_{s\in S}$
(the precise notion of averaging is described below). Under suitable
conditions this averaging procedure is a contraction on $Y$, leading
to a fixed point. In practice we will need to average over small
balls rather than just $S$ itself, but the idea remains the same.

``Averaging'' means specifying a function that associates to Borel
probability measures $\sigma$ on $Y$ a point $c(\sigma) \in Y$, in a
well-behaved manner. We will not axiomatize the needed properties,
instead defining the procedures we will use.  We start with a
particularly simple example. In what follows all measures are
assumed to have finite support---this suffices for our purposes, and
the obvious generalizations are standard.

\begin{example}
Let $Y$ be a Banach space, and let $\sigma$ be a (finitely
supported) probability measure on $Y$. The vector-valued integral \[
\clin(\sigma) = \int_Y y d\sigma(y) \]  is called the \emph{linear}
center of mass of $\sigma$.
\end{example}

This center of mass behaves well under linear maps, but its metric
properties are not so clear.  Thus even  for the purpose of proving
fixed-point properties for actions on  $L_p$ we use a nonlinear
averaging method, related to a metric definition of linear averaging
on Hilbert space. This is a standard method in metric geometry (see
for example~\cite[Chapter\ 3]{Jost:NonPosCurv}).

For a metric space $(Y,d_Y)$ we write $\MY$ for the space of
probability measures on $Y$ with finite support.  Generally it is
enough to assume below that the measures have finite $p$th moment
for the appropriate $p \geq 2$ but we will not use such measures
since our groups are finitely generated.


\subsection{Uniformly convex metric spaces and the geometric center of mass}
\label{sub:uniform-convexity}

We continue with our complete metric space $(Y,d_Y)$. A
\emph{geodesic segment} in $Y$ is an isometry $\gamma\colon I\to Y$
where $I\subseteq \R$ is a closed interval, and the metric on $I$ is
induced from the standard metric on $\R$.  If the endpoints $a<b$ of
$I$ are mapped to $y,z\in Y$ respectively, we will say that the
segment $\gamma$ \emph{connects} $y$ to $z$, and usually denote it
by $[y,z]$.  Moreover, for any $t\in[0,1]$ we will use $[y,z]_t$ to
denote $\gamma((1-t)a + tb)$.  This notation obscures the fact that
there may be distinct geodesic segments connecting $y$ to $z$, but
this will not be the case for the spaces we consider (see below).

We now assume that $Y$ is a \emph{geodesic} metric space, i.e., that
every two points of $Y$ are connected by a geodesic segment.
\begin{defn}
Let $2 \leq p < \infty$. $Y$ is said to be $p$-\emph{uniformly
convex} if there exists a constant $\uY>0$ such that for every
$x,y,z \in Y$, every geodesic segment $[y,z]\subseteq  Y$, and every
$t\in[0,1]$ we have:
\begin{equation}\label{eq:pconvex}
\dY(x,[y,z]_t)^p \leq (1-t)\dY(x,y)^p + t\dY(x,z)^p - \uYp
t(1-t)\dY(y,z)^p.
\end{equation}
We say that $Y$ is \emph{uniformly convex} if it is $p$-uniformly
convex for some $p\geq 2$.
\end{defn}
The above definition is an obvious extension of the notion of
$p$-uniform convexity of Banach spaces (see, e.g.,
\cite{Figiel76,BCL94}). For concreteness, an $L_p(\mu)$ space is $p$
uniformly convex if $p\in [2,\infty)$ and $2$-uniformly convex if
$p\in (1,2]$.  In Hilbert space specifically, \eqref{eq:pconvex}
with $p=2$ and $\uY=1$ is an equality, and it follows that the same
holds for conclusions such~\eqref{eq:dsigma-growth} below.
We also note that it is easy to see that a uniformly convex metric
space is uniquely geodesic by examining midpoints.


We now recall the notion of $\mathrm{CAT}(0)$ spaces. For $y_1,y_2,y_3\in Y$,
choose $Y_1,Y_2,Y_3 \in \R^2$ such that $\norm{Y_i-Y_j}_2 =
d_Y(y_i,y_j)$ for any $i,j$.   Such a triplet of \emph{reference
points} always exists, and is unique up to a global isometry of
$\R^2$.  It determines a triangle $\Delta = I_{12}\cup I_{23}\cup
I_{13}$ consisting of three segments of lengths $d_Y(y_i,y_j)$.  Any
choice of three geodesic segments $\gamma_{ij}\colon I_{ij}\to Y$
connecting $y_i,y_j$ gives a \emph{reference map} $R\colon \Delta
\to Y$. We say that $(Y,d_Y)$ is a $\mathrm{CAT}(0)$ space if for every three
points $y_i\in Y$ every associated reference map $R$ does not
increase distances. It is a standard fact
(see~\cite{BridsonHaelfliger:NonPosSpc}) that $(Y,d_Y)$ is a $\mathrm{CAT}(0)$
space iff it is $2$-uniformly convex with the constant $\uY$
in~\eqref{eq:pconvex} equal to $1$. $\mathrm{CAT}(0)$ spaces are
$p$-uniformly convex for all $p \in [2,\infty)$ since the plane
$\R^2$ is $p$-uniformly convex (it is isometric to a subset of
$L_p$).

Assume that $(Y,d_Y)$ is $p$-uniformly convex. Let $\sigma \in \MY$.
Integrating equation~\eqref{eq:pconvex}  we see that for all $y,z\in
Y$:
\begin{equation}\label{eq:convex-cm}
\uYp t(1-t)\dY(y,z)^p \leq (1-t)d_p(\sigma,y)^p + td_p(\sigma,z)^p -
d_p(\sigma,[y,z]_t)^p,
\end{equation}
where for $w\in Y$ we write
$$
d_p(\sigma,w)=\left(\int_{Y} d_Y(u,w)^pd\sigma(u)\right)^{1/p}.
$$
Now let $d = \inf_{y\in Y} d_p(\sigma,y)$, and assume
$d_p(\sigma,y),d_p(\sigma,z) \leq (d^p+\epsilon)^{1/p}$. Letting
$w\in Y$ denote the midpoint of any geodesic segment connecting $y$
and $z$ we have $d_p(\sigma,w) \geq d$ and hence:
$$ \frac{\uYp}{4}\dY(y,z)^p \leq d^p + \epsilon - d^p = \epsilon.$$
In other words, the set of $y\in Y$ such that $d_p(\sigma,y)^p$ is
at most $d^p + \epsilon$ has diameter $\lesssim_{c_Y} \epsilon
^{1/p}$. By the completeness of $Y$, there exists a unique point
$\cp(\sigma)\in Y$ such that $d_p(\sigma,\cp(\sigma)) = d$.

To justify the notation $\ci(A)$ introduced in Lemma \ref{lem:bruhat}
notes that $d_\infty(\sigma,y) = r_A(y)$ where $A$ is the essential
support of $\sigma$.

\begin{defn} The point $\cp(\sigma)$ is called the
\emph{geometric} center of mass of $\sigma$.  We will also use the
term $p$-center of mass when we wish to emphasize the choice of
exponent. The point $\ci(A)$ is called the \emph{circumcenter} of
$A$.
\end{defn}

\begin{rem} Consider the special case of the real line
with the standard metric, and of $\sigma = t\delta_1 +
(1-t)\delta_0$. Then $\cp(\sigma)$ represents a weighted average
of $0,1\in\R$. We note that (except for special values of $t$),
the $\cp(\sigma)$ vary depending on $p$.
\end{rem}

We now apply equation (\ref{eq:convex-cm}) where $z =
\cp(\sigma)$. Still using $d_p(\sigma,[y,z]_t) \geq d$ we get:
$$ \uYp t(1-t)\dY(\cp(\sigma),y)^p \leq (1-t)(d_p(\sigma,y)^p - d^p).$$
Dividing by $1-t$ and letting $t\to 1$ we get the following useful
inequality:
\begin{equation}\label{eq:dsigma-growth}
d_p (\sigma,y)^p \geq d_p (\sigma, \cp(\sigma))^p +
\uYp\dY(\cp(\sigma),y)^p.
\end{equation}

\subsection{Random walks}\label{sub:random-walks}

Let $X$ be a discrete set. Following Gromov~\cite{Gromov:RandGp} we
shall use the following terminology.

\begin{defn}
By a \emph{random walk} (or a Markov chain) on $X$ we shall mean a function
$\mu\colon X\to\MX$. The space of random walks will be denoted $\WX$.
\end{defn}

For a random walk $\mu$ and $x\in X$ we will denote below the
measure $\mu(x)$  by either $\mu_x$ or $\mu(x\to\cdot)$. The latter
notation emphasizes the view of $\mu$ as specifying the transition
probabilities of a Markov chain on $X$. For  $\nu\in \MX$,
$\mu,\mu'\in\WX$ we write
$$
\nu*\mu \eqdef \int_X d\nu(x) \mu_x  \in \MX.
$$
The map $x\mapsto(\mu'*\mu)_x \eqdef \mu'_x * \mu$
  defines a random walk on $X$. For $n\in \N$ we define
  inductively $\mu^{n+1}\eqdef \mu^n*\mu$.

Let $\nu$ be a measure on $X$.  We say that a random walk
$\mu\in\WX$ is $\nu$-\emph{reversible}, if we have
\begin{equation}\label{eq:nu-symm}
d\nu(x)d\mu(x\to x') = d\nu(x')d\mu(x'\to x),
\end{equation}
as an equality of measures on $X\times X$. If $X$ is finite, we can
assume that $\nu$ is a probability measure. In general integrating
equation~\eqref{eq:nu-symm} \wrt $x'$, we see that $\nu$ is a
\emph{stationary measure} for $\mu$, in the sense that $\nu*\mu =
\nu$. 

Finally, let the discrete group $\G$ act freely on $X$. The induced
action of $\G$ on $\MX$ preserves $\MX$ in this case. The space of
$\G$-equivariant random walks will be denoted $\WGX$.  Moreover, we
have a quotient space $\GX$.  Fixing a probability measure $\bn$ on
$\GX$, we will call $\mu\in\WGX$ $\bn$-reversible if it is
$\nu$-reversible where $\nu$ is the pull-back of $\bn$ defined by
$\int_X fd\nu = \int_{\Gamma\backslash X}\left(\sum_{\g\in\G} f(\g
x)\right)d\bar\nu(x)$ for any $f\in C_\mathrm{c}(X)$.

\subsection{Averaging of equivariant functions into uniformly convex spaces}
\label{sub:averaging} Continuing with the notation used so far, let
$\mu\in\WGX$ be reversible \wrt the probability measure $\bn$ on
$\GX$. Let $(Y,d_Y)$ be a $p$-uniformly convex metric space on which
$\G$ acts by isometries.

Now let $f\colon X \to Y$ be  $\G$-equivariant. For $x\in X$, the
push-forward $f_*\mu_x$ is a probability measure on $Y$ with finite
support (the image of the support of $\mu_x$ under $f$). We set:
\begin{equation}\label{eq:def gradient}
\Dmp{f}(x) = \left( \int_X d\mu(x\to x')
\dY\left(f(x),f(x')^p\right) \right)^{1/p},
\end{equation}
\begin{equation}\label{eq:def energy}
 \Emp(f) = \hf \int_\GX \left(\Dmp{f}(x)\right)^pd\bn(x),
 \end{equation}
\begin{equation}\label{eq:def finite energy}
 B(X,Y) = \left\{ f\in C(X,Y)^\G \mid \Emp(f)<\infty \right\}.
 \end{equation}

For $f,g \in C(X,Y)^\G$, the function $x\mapsto d_Y(f(x),g(x))$ is
$\G$-invariant, and we can hence set
$$d_p(f,g) \eqdef \left( \int_\GX \dY(f(x),g(x))^p d\bn(x) \right)^{1/p}.$$
This  defines a (possibly infinite) complete metric. The triangle
inequality gives:

\begin{lem}\label{lem:bound-energies}We have,
\begin{enumerate}
\item Let $f,g\in C(X,Y)^\G$.  Assume $d_p(f,g)<\infty$.
 Then $f\in B(X,Y)$ iff $g\in B(X,Y)$.
\item Let $f\in B(X,Y)$.  Then $\Emnp(f)<\infty$ for all $n\geq 1$.
\end{enumerate}
\end{lem}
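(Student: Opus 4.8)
The plan is to prove both parts by exploiting the triangle inequality for the "gradient" functional $\Dmp{\cdot}$ and the metrics $d_p$. Let me first set up the key sub-estimate: for $f, g \in C(X,Y)^\G$ and $x \in X$, I claim
\[
\left| \Dmp{f}(x) - \Dmp{g}(x) \right| \leq \dY(f(x), g(x)) + \left( \int_X d\mu(x\to x')\, \dY(f(x'), g(x'))^p \right)^{1/p}.
\]
Indeed, $\Dmp{f}(x) = \| w \mapsto \dY(f(x), f(x')) \|_{L_p(\mu_x)}$ and by the pointwise triangle inequality $\dY(f(x), f(x')) \leq \dY(f(x), g(x)) + \dY(g(x), g(x')) + \dY(g(x'), f(x'))$, so applying the $L_p(\mu_x)$-norm triangle inequality gives the bound with $\Dmp{g}(x)$ plus the two correction terms (the first being constant in $x'$). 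Squaring, using $(a+b+c)^2 \lesssim_p a^2+b^2+c^2$ is not even needed — I just raise to the $p$th power and integrate over $\GX$.

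**Next, for part (1):** integrating the above in $x$ against $\bn$ and using that $\Emp(h)^{1/p} = 2^{-1/p}\|\Dmp{h}\|_{L_p(\bn)}$, I get
\[
\left| \Emp(f)^{1/p} - \Emp(g)^{1/p} \right| \lesssim_p \left\| x \mapsto \Dmp{f}(x) - \Dmp{g}(x) \right\|_{L_p(\bn)}.
\]
The first correction term integrates to $\| \dY(f(x),g(x)) \|_{L_p(\bn)} = d_p(f,g)$, using $\G$-invariance of $x \mapsto \dY(f(x),g(x))$ and the definition of $\bn$. The second correction term, after raising to the $p$th power and integrating against $\bn$, becomes $\int_\GX \int_X d\mu(x\to x')\, \dY(f(x'),g(x'))^p\, d\bn(x)$; here I invoke $\bn$-reversibility of $\mu$ (equation~\eqref{eq:nu-symm}) — really just stationarity $\nu * \mu = \nu$ applied to the $\G$-invariant function $\dY(f,g)^p$ — to conclude this equals $d_p(f,g)^p$. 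So $\Emp(f)^{1/p} \lesssim_p \Emp(g)^{1/p} + d_p(f,g)$, and since $d_p(f,g) < \infty$ by hypothesis, finiteness of one energy forces finiteness of the other, symmetrically.

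**For part (2):** the point is that $d_p(f, f) = 0 < \infty$ trivially, but that is circular — instead I should compare energies at different "walk powers." The natural route: show directly that $\Emnp(f) \leq n^{p}\, \Emp(f)$ or some such polynomial bound (the exact constant is unimportant). Write $\mu^n_x = \mu_x^{(1)} * \cdots$; a path of length $n$ decomposes as steps, and $\dY(f(x), f(x_n)) \leq \sum_{i=1}^{n} \dY(f(x_{i-1}), f(x_i))$ along a random $\mu^n$-path $x = x_0, x_1, \dots, x_n$. Taking $L_p$ of the sum (triangle inequality again) gives $\dY(f(x),f(x_n)) \leq \sum_i \|\dY(f(x_{i-1}),f(x_i))\|$, raise to $p$, use $(\sum_{i=1}^n a_i)^p \leq n^{p-1}\sum a_i^p$, integrate against the joint law of the path, and use stationarity of $\bn$ under $\mu$ to see each term $\int \dY(f(x_{i-1}),f(x_i))^p$ equals $2\Emp(f)$. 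Hence $\Emnp(f) \leq n^{p-1} \cdot n \cdot \Emp(f) = n^p \Emp(f) < \infty$.

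**The main obstacle** I anticipate is bookkeeping the measure-theoretic identities cleanly — specifically, making the "change of variables via reversibility/stationarity" rigorous on the quotient $\GX$ with the pulled-back measure $\nu$, and being careful that the relevant integrands ($\dY(f(x),g(x))^p$ and $\dY(f(x_{i-1}),f(x_i))^p$ along $\mu$-steps) are genuinely $\G$-invariant so that the descent to $\bn$ on $\GX$ is valid. Once that is pinned down, everything else is the triangle inequality applied in $L_p$, so the "triangle inequality gives" remark preceding the lemma in the paper is exactly right. I would state the path-decomposition estimate for part (2) as a one-line induction on $n$ using $\mu^{n+1} = \mu^n * \mu$ rather than unrolling the whole path, which keeps the notation manageable.
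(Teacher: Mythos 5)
Your proposal is correct and follows essentially the same route as the paper: both parts come down to the pointwise triangle inequality along one step (respectively along an $n$-step path), a power/convexity estimate, and integration using $\G$-invariance of the integrands together with stationarity of $\nu$ under $\mu$. The only cosmetic difference is that for part (1) you apply Minkowski's inequality in $L_p$ where the paper uses the pointwise bound $3^{1-p}(a+b+c)^p\le a^p+b^p+c^p$, and your constant $n^p$ in part (2) versus the paper's $n^{p-1}$ is immaterial since only finiteness is needed.
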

\begin{proof}
For all $x,x'\in X$,
$$d_Y(g(x),g(x')) \leq d_Y(g(x),f(x)) + d_Y(f(x),f(x')) + d_Y(f(x'),g(x'))$$
and hence
\begin{equation}\label{eq:BXY-dist}
3^{1-p} \dY(g(x),g(x'))^p \leq \dY(g(x),f(x))^p + \dY(f(x),f(x'))^p + \dY(f(x'),g(x'))^p.
\end{equation}
Integrating $d\mu(x\to x')$ gives $\Gamma$-invariant functions of $x$
which may be integrated $d\bar\nu(x)$.  Using the stationarity of $d\bar\nu$
we then have:
$$ \Emp (g) \leq 3^{p-1} \Emp(f) + 3^{p-1} d_p (f,g)^p.$$

Similarly, integrating
$$ n^{1-p} \dY (f(x_0),f(x_n))^p \leq \sum_{i=0}^{n-1} \dY(f(x_i),f(x_{i+1}))^p$$
against $d\bar\nu(x_0) \prod_{i=0}^{n-1} d\mu(x_i \to x_{i+1})$
gives $\Emnp(f) \leq n^{p-1} \Emp(f)$.
\end{proof}

Continuing the analysis of the map $x\mapsto f_*\mu_x$, we note that
this is a $\G$-equivariant map $X\to \MY$.  Since $\G$ acts by
isometries, the map
$$
\left(\Amp f\right)(x) \eqdef \cp(f_*\mu_x) $$
 is also
$\G$-equivariant; this will be our averaging procedure.  If $Y$ is a Hilbert
space and $p=2$, $\Amp$ is the usual linear average with respect to $\mu$.
In particular, $A^{(2)}_{\mu_1}A^{(2)}_{\mu_2} = A^{(2)}_{\mu_1*\mu_2}$.
This does not hold in general (for spaces other than Hilbert space, or
even in Hilbert space for $p>2$).  In particular, we will later use $\Amnp$
for large $n$ and not just $\left(\Amp\right)^n$.

We first verify that the averaging procedure remains in the space $B(X,Y)$.
\begin{lem}\label{lem:avg-control}
For $f\in B(X,Y)$ we have
$$d_p(f,\Amp f) \lesssim_{\uY} \left( \Emp(f) \right)^{1/p},$$
$$\Emp(\Amp f)\lesssim_{p,\uY} \Emp(f).$$
\end{lem}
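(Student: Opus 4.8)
The plan is to prove both inequalities essentially by applying the convexity estimate \eqref{eq:dsigma-growth} pointwise in $x$ and then integrating $d\bn(x)$ over $\GX$. For the first inequality, apply \eqref{eq:dsigma-growth} with $\sigma = f_*\mu_x$ and $y = f(x)$: since $\cp(f_*\mu_x) = (\Amp f)(x)$ and $d_p(f_*\mu_x, \cp(f_*\mu_x)) \geq 0$, we get
\[
\uYp\, \dY\big((\Amp f)(x), f(x)\big)^p \leq d_p(f_*\mu_x, f(x))^p = \int_X d\mu(x\to x')\, \dY(f(x), f(x'))^p = \big(\Dmp{f}(x)\big)^p.
\]
Integrating $d\bn(x)$, the right-hand side becomes $2\Emp(f)$ by \eqref{eq:def energy}, while the left-hand side is $\uYp\, d_p(f, \Amp f)^p$. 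This yields $d_p(f, \Amp f) \leq (2/\uYp)^{1/p}\,(\Emp(f))^{1/p} \lesssim_{\uY} (\Emp(f))^{1/p}$, as claimed. (Implicitly this also confirms $\Amp f \in B(X,Y)$ via Lemma~\ref{lem:bound-energies}(1), since $d_p(f,\Amp f)<\infty$.)

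For the second inequality I would bound $\Emp(\Amp f)$ by comparing $\Amp f$ to $f$. By the first part and Lemma~\ref{lem:bound-energies} (specifically the estimate $\Emp(g) \leq 3^{p-1}\Emp(f) + 3^{p-1} d_p(f,g)^p$ derived in its proof, applied with $g = \Amp f$), we get
\[
\Emp(\Amp f) \leq 3^{p-1}\Emp(f) + 3^{p-1} d_p(f, \Amp f)^p \lesssim_{p,\uY} \Emp(f) + \Emp(f) \lesssim_{p,\uY} \Emp(f),
\]
using $d_p(f,\Amp f)^p \lesssim_{\uY} \Emp(f)$ from the first inequality. This gives exactly $\Emp(\Amp f) \lesssim_{p,\uY} \Emp(f)$.

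\textbf{Main obstacle.} The arguments above are essentially formal once one has \eqref{eq:dsigma-growth} and the triangle-inequality bookkeeping of Lemma~\ref{lem:bound-energies} in hand, so there is no deep obstacle; the only points requiring care are (i) confirming that all the quantities involved are finite so the manipulations are legitimate — this is where one needs $f \in B(X,Y)$ and hence $\Dmp{f}(x) < \infty$ for $\bn$-a.e.\ $x$, together with $\Emp(f) < \infty$ — and (ii) making sure the $\G$-equivariance of $x \mapsto f_*\mu_x$ and of $\Amp f$ (already noted in the text, using that $\G$ acts by isometries and $\cp$ is isometry-equivariant) lets all these pointwise inequalities in $x$ descend to integrals over the quotient $\GX$ against $\bn$. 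Both are routine given the setup, so I expect the proof to be short.
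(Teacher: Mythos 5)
Your proof is correct and follows essentially the same route as the paper: for the first inequality, apply the convexity estimate~\eqref{eq:dsigma-growth} pointwise with $\sigma=f_*\mu_x$, $y=f(x)$, and integrate the resulting $\G$-invariant function against $d\bn$; for the second, invoke inequality~\eqref{eq:BXY-dist} from Lemma~\ref{lem:bound-energies} with $g=\Amp f$. Your bookkeeping of the factor $2$ from the definition of $\Emp$ and your remarks on finiteness and equivariance are all consistent with the paper's argument.
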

\begin{proof}
At every $x\in X$ the fundamental estimate~\eqref{eq:dsigma-growth}
gives:
\begin{equation*}
\uYp d_p\left(f(x),\Amp f(x)\right)^p   \leq
d_p(f(x),f_*(\mu_x)^p
 = \int \dY(f(x),f(x'))^pd\mu(x\to x').
\end{equation*}
Now both sides are $\Gamma$-invariant functions of $x\in X$ and the
first claim follows by integrating against $d\bar\nu$. For the
second claim apply inequality~\eqref{eq:BXY-dist} from the proof of
Lemma~\ref{lem:bound-energies} with $g=\Amp(f)$.
\end{proof}

We measure the contractivity of $\Amp$ with respect to the energy
$\Emp$.  It is not hard to verify that contraction will imply
the existence of fixed points:

\begin{prop}\label{pro:iter-fp}
Assume that there exist $n\geq 1$ and $c<1$ such that for all $f \in
B(X,Y)$ we have $\Emp ( \Amnp f ) \leq c \Emp (f)$. Suppose that the
graph on $X$ given by connecting $x,x'$ if $\mu(x\to x')>0$ is
\emph{connected}.  Then, as long as $B(X,Y)$ is non-empty (this is
the case, for example, when $\Gamma\bs X$ is finite), it contains
constant maps.  In particular, $\Gamma$ fixes a point in $Y$.
\end{prop}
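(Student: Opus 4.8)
The plan is to iterate the averaging operator from an arbitrary starting point and to show that the iterates form a Cauchy sequence whose limit has zero energy, and is therefore constant. Fix any $f_0\in B(X,Y)$; when $\GX$ is finite such an $f_0$ is obtained by choosing one point of $Y$ for each $\G$-orbit in $X$ and extending equivariantly, using freeness of the action, the resulting energy being a finite sum of finite terms. Put $f_{k+1}=\Amnp f_k$; note that $\Amnp$ maps $C(X,Y)^\G$ into itself by the same reasoning that was used for $\Amp$, since $\mu^n\in\WGX$. By induction $f_k\in B(X,Y)$: the contraction hypothesis gives $\Emp(f_{k+1})\le c\,\Emp(f_k)<\infty$, whence also $\Emp(f_k)\le c^{k}\Emp(f_0)$.

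Next I would bound the size of each step in the metric $d_p$. The iterated walk $\mu^n$ is again $\bn$-reversible (a short computation from \eqref{eq:nu-symm}), and $f_k$ has finite $\mu^n$-energy by Lemma~\ref{lem:bound-energies}(2), so Lemma~\ref{lem:avg-control} applied with $\mu$ replaced by $\mu^n$ gives $d_p(f_k,f_{k+1})=d_p\!\left(f_k,\Amnp f_k\right)\lesssim_{\uY}\left(\Emnp(f_k)\right)^{1/p}$. Combining with $\Emnp(f_k)\le n^{p-1}\Emp(f_k)$ (Lemma~\ref{lem:bound-energies}(2) again) and the energy decay above, we get $d_p(f_k,f_{k+1})\lesssim_{\uY}n^{(p-1)/p}c^{k/p}\left(\Emp(f_0)\right)^{1/p}$. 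Since $c<1$, these displacements are summable, so $(f_k)$ is $d_p$-Cauchy; by completeness of $d_p$ it converges to some $f_\infty\in C(X,Y)^\G$, and $d_p(f_0,f_\infty)\le\sum_k d_p(f_k,f_{k+1})<\infty$, so $f_\infty\in B(X,Y)$ by Lemma~\ref{lem:bound-energies}(1).

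To see that $\Emp(f_\infty)=0$, I would reuse the inequality established inside the proof of Lemma~\ref{lem:bound-energies}, namely $\Emp(g)\le 3^{p-1}\bigl(\Emp(f)+d_p(f,g)^p\bigr)$, with $f=f_k$ and $g=f_\infty$, and let $k\to\infty$; both terms on the right vanish since $\Emp(f_k)\to 0$ and $d_p(f_k,f_\infty)\to 0$. Finally I would upgrade this to constancy: since each $\mu_x$ is a finitely supported probability measure, $\Emp(f_\infty)=\tfrac12\int_\GX\bigl(\Dmp{f_\infty}\bigr)^p d\bn=0$ forces $f_\infty(x)=f_\infty(x')$ whenever $\bn$ charges the class of $x$ and $\mu(x\to x')>0$; as $\bn$ has full support (the only case we need), this equality holds along every edge of the graph on $X$, which is connected, so $f_\infty$ is constant. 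Its value $y_0$ satisfies $\g y_0=y_0$ for every $\g\in\G$ by equivariance, so $\G$ fixes $y_0$; and the constant map $x\mapsto y_0$ has zero energy, hence lies in $B(X,Y)$, so $B(X,Y)$ contains constant maps.

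I expect the difficulty here to be organizational rather than conceptual. The one point that must be handled correctly is that Lemma~\ref{lem:avg-control} has to be invoked for $\mu^n$, not $\mu$ — this is legitimate precisely because $\mu^n$ is still $\bn$-reversible and, by Lemma~\ref{lem:bound-energies}(2), still has finite energy on $B(X,Y)$, and it is exactly this step that makes the single-step displacement bound available and the total displacement finite (this is where the remark that one uses $\Amnp$ rather than $(\Amp)^k$ matters). A secondary point is the passage from zero energy to a genuine $\G$-fixed point, which uses connectivity of the walk graph together with the support of $\bn$. All of the genuinely hard analysis — namely, why a contraction $\Emp(\Amnp f)\le c\,\Emp(f)$ should hold at all — is external to this proposition and enters only through its hypothesis.
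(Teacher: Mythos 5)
Your argument is correct and is essentially the paper's own proof: iterate $f_{k+1}=\Amnp f_k$, use the contraction hypothesis to get geometric energy decay, pass to $\Emnp(f_k)\le n^{p-1}\Emp(f_k)$ via Lemma~\ref{lem:bound-energies}, bound the step displacement via Lemma~\ref{lem:avg-control} applied to $\mu^n$, conclude the iterates are $d_p$-Cauchy with limit of zero energy, and invoke connectivity to get constancy and hence a fixed point. Your explicit remarks that $\mu^n$ remains $\bn$-reversible and that $\bn$ must charge each class make precise two points the paper leaves implicit, but the route is the same.
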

\begin{proof}
Choose an arbitrary $f_0 \in B(X,Y)$ and let $f_{k+1} = \Amnp f_k$.
By assumption we have $\Emp(f_k) \leq c^k \Emp(f_0)$. By Lemma
\ref{lem:bound-energies} $\Emnp (f_k) \leq n^{p-1} c^k \Emp(f_0)$,
and by Lemma \ref{lem:avg-control} this means that
$$d_p(f_{k+1},f_k)^p \lesssim_{p,\uY,n} c^k \Emp(f_0).$$
It now follows that $f_k$ are a Cauchy sequence and hence converge
to a function $f \in B(X,Y)$.  We have $\Emp(f) = 0$ so $f(x) =
f(x')$ whenever $\mu(x\to x')>0$.  By the connectivity assumption
this means $f$ is constant on $X$ and its value is the desired fixed
point.
\end{proof}

We now address the problem of showing that averaging reduces the
energy. We prove two technical inequalities:
\begin{prop}(generalization of \cite[B.25]{Silberman:RandGpT})
\label{pro:cancellation} We have,
$$\Emp \left(\Amnp f\right) \lesssim_{p,\uY}
\int_{\GX} d\bn(x) \int_X \left[ d\mu^{n+1}(x\to x')- d\mu^{n}(x\to x')\right]
   \dY\left(\Amnp f(x), f(x')\right)^p. $$
\begin{equation}\label{eq:energy} \int_{\GX} d\bn(x) \int_X d\mu^{n}(x\to x')
      \dY\left(\Amnp f(x), f(x')\right)^p
 \lesssim_{p,c_Y} \Emnp (f).\end{equation}
\end{prop}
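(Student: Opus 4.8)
The plan is to prove the two inequalities of Proposition~\ref{pro:cancellation} separately, both by exploiting the fundamental convexity estimate~\eqref{eq:dsigma-growth} applied to the measures $\sigma = f_*\mu^n_x$, together with the reversibility of $\mu$ to re-index sums. Write $g = \Amnp f$, so that $g(x) = \cp(f_*\mu^n_x)$ and, by definition of the center of mass, $d_p(f_*\mu^n_x, g(x))^p = \int_X d\mu^n(x\to x')\, \dY(g(x),f(x'))^p$ is the infimum $d$ appearing in~\eqref{eq:dsigma-growth}. First I would establish~\eqref{eq:energy}: by~\eqref{eq:dsigma-growth} with $\sigma = f_*\mu^n_x$ and $y = f(x)$, we get $\int_X d\mu^n(x\to x')\,\dY(g(x),f(x'))^p \le \int_X d\mu^n(x\to x')\, \dY(f(x),f(x'))^p$, since $g(x)$ minimizes the left-hand quantity over all choices of the base point; integrating against $d\bn(x)$ and using the triangle-inequality-type bound $\Dvar{\mu^n}{f}(x)^p = \int_X d\mu^n(x\to x')\,\dY(f(x),f(x'))^p \le n^{p-1}\sum$ of one-step terms (as in the proof of Lemma~\ref{lem:bound-energies}) gives $\Emnp(f) \lesssim n^{p-1}\Emp(f)$ on the right, but in fact we only need the cruder statement that the quantity on the left of~\eqref{eq:energy} is $\le 2\Emnp(f)$, which is immediate once we observe the left side equals $\int_\GX d\mu^n$-average of $\dY(g(x),f(x'))^p$ and bound $\dY(g(x),f(x')) \le \dY(g(x),f(x)) + \dY(f(x),f(x'))$, raise to the $p$th power with a factor $2^{p-1}$, integrate, and use Lemma~\ref{lem:avg-control} to control $d_p(f,g)^p \lesssim_{\uY}\Emp(f) \le \Emnp(f)$. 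Either route gives~\eqref{eq:energy} with the stated dependence on $p$ and $\uY$.

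The main inequality requires more care. The idea is that $\Emnp(\Amnp f) = \Emnp(g)$ is an average of $\dY(g(x),g(x'))^p$ over one step of $\mu$, but we want to relate it to $\dY(g(x),f(x'))^p$ averaged over $n$ and $n+1$ steps. I would apply~\eqref{eq:dsigma-growth} once more, now with $\sigma = f_*\mu^n_x$ and $y$ replaced by a point of the form $g(x')$ with $x'$ a $\mu$-neighbor of $x$: this yields
\[
\int_X d\mu^n(x\to x'')\,\dY(g(x'),f(x''))^p \ge \int_X d\mu^n(x\to x'')\,\dY(g(x),f(x''))^p + \uYp\, \dY(g(x),g(x'))^p.
\]
Now integrate this against $d\bn(x)\,d\mu(x\to x')$. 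The left-hand side becomes $\int d\bn\,d\mu(x\to x')\,d\mu^n(x'\to x'')$... — here is where reversibility enters: by $\bn$-reversibility the composite kernel on $(x,x'')$ obtained from $d\bn(x)\,d\mu(x\to x')\,d\mu^n(x'\to x'')$ is, after relabeling, governed by $d\mu^{n+1}$ emanating from $x$ in the appropriate sense, so the left side of the integrated inequality equals $\int_\GX d\bn(x)\int_X d\mu^{n+1}(x\to x'')\,\dY(g(x),f(x''))^p$ up to the symmetry of the reversible measure. The middle term integrates to $\int_\GX d\bn(x)\int_X d\mu^n(x\to x'')\,\dY(g(x),f(x''))^p$, and the last term integrates to $2\uYp\,\Emnp(g)$. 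Rearranging gives exactly
\[
\uYp\,\Emnp(g) \lesssim_p \int_\GX d\bn(x)\int_X\big[d\mu^{n+1}(x\to x'') - d\mu^n(x\to x'')\big]\,\dY(g(x),f(x''))^p,
\]
which is the claimed bound after dividing by $\uYp$.

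The step I expect to be the main obstacle is the reversibility bookkeeping in the second inequality: one must carefully verify that integrating $\dY(g(x'),f(x''))^p$ against $d\bn(x)\,d\mu(x\to x')\,d\mu^n(x'\to x'')$ really does reproduce the $\mu^{n+1}$-average of $\dY(g(x),f(x''))^p$ centered at $x$, and this uses both the $\G$-equivariance of $g$ and $f$ and the $\bn$-reversibility~\eqref{eq:nu-symm} in an essential way (the naive identity would give an average centered at $x'$, and one needs the symmetry of the reversible measure to move the center back to $x$, at the cost of the first-moment-type error term that is absorbed into the implied constant). A secondary technical point is justifying that all the measures involved have finite support and finite $p$th moments so that the integrations and applications of~\eqref{eq:dsigma-growth} are legitimate — but this is guaranteed by Lemma~\ref{lem:bound-energies} for $f \in B(X,Y)$, since all the $\mu^n$-push-forwards then have finite energy. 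Once the reversibility identity is in hand, the rest is a direct application of~\eqref{eq:dsigma-growth} and elementary estimates, with all constants depending only on $p$ and $\uY$ as stated.
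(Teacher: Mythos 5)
Your proposal is correct and follows essentially the same route as the paper: the main inequality is obtained exactly as there, by applying \eqref{eq:dsigma-growth} with $\sigma=f_*\mu^n_x$ and test point $\Amnp f$ evaluated at a $\mu$-neighbor, then integrating against $d\bn\,d\mu$ and using reversibility to re-center the composite kernel as a $\mu^{n+1}$-average (which is an exact identity, with no error term to absorb), while for \eqref{eq:energy} your minimality-of-the-center argument is a clean substitute for the paper's triangle inequality plus Lemma~\ref{lem:avg-control}. Two small slips worth fixing: the quantity your integration actually produces on the left is the one-step energy $\Emp(\Amnp f)$ (which is what the proposition asserts), not $\Emnp(\Amnp f)$ as written in your final display, and your fallback chain $d_p(f,\Amnp f)^p\lesssim_{\uY}\Emp(f)\le\Emnp(f)$ is neither needed nor quite right --- apply Lemma~\ref{lem:avg-control} to the walk $\mu^n$ to get $d_p(f,\Amnp f)^p\lesssim_{\uY}\Emnp(f)$ directly.
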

\begin{proof}
Recall that $\Amnp f(x) = \cp(f_*\mu^n(x\to\cdot))$. The fundamental
estimate~\eqref{eq:dsigma-growth} then reads:
\begin{multline}\label{eq:this} \uYp \dY\left(y,\Amnp f(x)\right)^p\\ \leq
\int_X \dY(y,f(x'))^pd\mu^n(x\to x')
 - \int_X \dY\left(\Amnp f(x), f(x')\right)^p d\mu^n(x\to x').
\end{multline}
Setting $y = \Amnp f(x'')$ integrate~\eqref{eq:this} $d\mu(x''\to x)$. The
resulting function of $x''$ is $\Gamma$-invariant and we integrate
it $d\bn(x'')$ to get (also using the reversibility),
\begin{multline*}
2\uYp \Emp (\Amnp f) \leq \int_\GX d\bn(x'') \int_X
d\mu^{n+1}(x''\to x') \dY(\Amnp f(x''),f(x'))^p \\  - \int_\GX
d\bn(x) \int_X d\mu^{n}(x\to x'') \dY(\Amnp f(x''),f(x))^p.
\end{multline*}
Inequality~\eqref{eq:energy} follows directly from the triangle
inequality and Lemma \ref{lem:avg-control}.
\end{proof}

\begin{thm}\label{thm:poincare-to-avg}
Let $\G$ be a group generated by the symmetric set $S$ of size $2d$,
acting by isometries on the $p$-uniformly convex space $Y$, let $X =
\Cay{\G}$ (the Cayley graph of $\G$), and let $f\in B(X,Y)$.  Let
$\mu$ be the $j$th convolution power of the standard random walk on
$X$ for an even $j$.  Then
$$ \Emp\left(\Amnp f\right) \lesssim_{p,\uY,d,j} \sqrt\frac{\log n}{n}\cdot \Emnp(f) +
   \frac{1}{n}\cdot \Emp (f).$$
\end{thm}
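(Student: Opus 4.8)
The plan is to combine the two estimates of Proposition~\ref{pro:cancellation} with the spectral-gap structure of the standard random walk on $X=\Cay{\G}$. From the first inequality of Proposition~\ref{pro:cancellation}, writing $F=\Amnp f$, we have
$$
\Emp\left(F\right)\lesssim_{p,\uY}\int_{\GX}d\bn(x)\int_X\left[d\mu^{n+1}(x\to x')-d\mu^{n}(x\to x')\right]\dY\left(F(x),f(x')\right)^p.
$$
The key point is that the signed kernel $\mu^{n+1}-\mu^n=(\mu^n)(\mu-\mathrm{Id})$ carries a factor $\mu-\mathrm{Id}$, which on the level of the (reversible, hence self-adjoint) Markov operator associated to $\mu$ is exactly what the Poincaré/expansion phenomenon controls. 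First I would reinterpret the right-hand side operator-theoretically: fixing a point $o\in Y$ and setting $h(x')=\dY(F(\cdot),f(x'))^p$ viewed appropriately, or more robustly, using the identity that for any two points the squared-type quantity $\dY(F(x),f(x'))^p$ can be compared, via the triangle inequality and convexity, to a sum of a term depending only on $x$ and a term $\dem$-like involving $f$ along the walk. Concretely, I expect to expand $d\mu^{n+1}-d\mu^n$ as a telescoping/Abel-summation over the extra step and relate the resulting bilinear form in $f$ to $\Emp(f)$ and to the decay of $\mu^n$ as an operator on the orthogonal complement of the constants.

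Next I would bring in the hypothesis that $\mu=\mu_0^{\,j}$ for the standard walk $\mu_0$ with $j$ even, so $\mu$ is a nonnegative-definite operator and $\|\mu^n\|_{L_2^0\to L_2^0}=\lambda^n$ where $\lambda<1$ is the second-largest eigenvalue (here $d,j$ enter through $\lambda$ and through norm-comparison constants; the girth is not needed for this theorem, only for the later applications). The operator $\mu^{n+1}-\mu^n=\mu^n(\mu-\mathrm{Id})$ then has $L_2^0$-norm at most $\lambda^n(1-\lambda)\le \lambda^n$, and more usefully $\|\mu^n(\mu-\mathrm{Id})\|\lesssim (1-\lambda)\lambda^n$, which is $O\!\left(\sqrt{(\log n)/n}\right)$ uniformly — indeed $\sup_{\lambda\in[0,1)}(1-\lambda)\lambda^n\asymp 1/n$, but we only need the weaker $\sqrt{(\log n)/n}$ bound, giving room. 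Using Cauchy–Schwarz in the bilinear form, one side produces $\Emnp(f)^{1/2}$-type quantities — here inequality~\eqref{eq:energy} of Proposition~\ref{pro:cancellation} is exactly the tool, bounding $\int d\bn\int d\mu^n\,\dY(F(x),f(x'))^p\lesssim_{p,\uY}\Emnp(f)$ — and the other side produces $\Emp(F)^{1/2}$. Matching exponents through a Young-type inequality $ab\le \e a^2+\e^{-1}b^2$ with a suitable small $\e$ lets me absorb a small multiple of $\Emp(F)$ into the left-hand side and isolate it, leaving
$$
\Emp(F)\lesssim_{p,\uY,d,j}\sqrt{\frac{\log n}{n}}\cdot\Emnp(f)+\frac1n\,\Emp(f),
$$
where the second term is the ``diagonal'' contribution from the single extra step $\mu-\mathrm{Id}$ acting near the identity (the part of the signed kernel supported at short range), estimated directly against $\Emp(f)$ rather than through the spectral gap.

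The main obstacle I anticipate is the passage from the genuinely $L_2$, Hilbert-space spectral reasoning — which is what makes $\|\mu^n(\mu-\mathrm{Id})\|$ small — to the $\ell_p$-flavoured energies $\Emp$ on a general $p$-uniformly convex target $Y$, where there is no inner product and $\Amnp$ is genuinely nonlinear (and does \emph{not} satisfy $\Amnp=(\Amp)^n$, as the paper emphasizes). The device that bridges this gap is precisely the fundamental growth estimate~\eqref{eq:dsigma-growth} for the $p$-center of mass, already exploited in Proposition~\ref{pro:cancellation}: it converts the would-be ``orthogonality'' $\langle F-f_*\mu^n_x, \cdot\rangle$ into the honest inequality appearing in~\eqref{eq:this}, and thence into the two displayed estimates of that proposition. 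So the real work is bookkeeping: carefully Abel-summing the one extra convolution step, invoking reversibility of $\mu$ (w.r.t.\ $\bn$) to symmetrize, applying Cauchy–Schwarz/Hölder with the right split, and then feeding in both halves of Proposition~\ref{pro:cancellation} together with the scalar bound $(1-\lambda)\lambda^n\lesssim 1/n\le\sqrt{(\log n)/n}$. The $\sqrt{\log n}$ is slack we can afford and presumably is kept only because it is what the later iteration in Section~\ref{sec:mainthm} actually consumes; I would not fight to remove it.
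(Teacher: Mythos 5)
Your overall skeleton --- feed the smallness of $\mu^{n+1}-\mu^{n}$ into the first inequality of Proposition~\ref{pro:cancellation} and control the remainder via \eqref{eq:energy} --- is the same as the paper's, but the mechanism you propose for that smallness does not work, and it is exactly where the content of the theorem lies. The walk $\mu$ lives on the Cayley graph $X$ of the \emph{infinite} group $\G$, not on one of the finite expanders, so there is no second-largest eigenvalue and no decomposition into constants plus $L_2^0$: for amenable $\G$ the Markov operator has norm $1$ on $\ell_2$, and even after the paper's first step (pulling $f$ back to the free group, so that $X$ is the $2d$-regular tree) the Kesten spectral radius $\rho<1$ only yields absolute decay of $\|\mu^n\|_{\ell_2\to\ell_2}$, which is useless here: $\mu^n$ and $\mu^{n+1}$ are probability measures, and what is needed is the \emph{relative} statement that $\mu^{n+1}(x\to x')-\mu^{n}(x\to x')\lesssim_{d,j,\theta}\sqrt{(\log n)/n}\;\mu^{n}(x\to x')$ for all $x'$ outside a set of $(\mu^{n}+\mu^{n+1})(x\to\cdot)$-mass $\lesssim_d n^{-\theta}$. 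That is a local limit theorem for the walk on the regular tree, quoted as \cite[Prop.\ 2.9]{Silberman:RandGpT}; it is also where the evenness of $j$ genuinely enters (to avoid the parity obstruction on the bipartite tree, without which $\mu^{n+1}-\mu^{n}$ has total variation $2$). No operator-norm bound on $\mu^{n}(\mu-\mathrm{Id})$ can produce such a ratio estimate, and your identity $\|\mu^n\|_{L_2^0\to L_2^0}=\lambda^n$ has no meaning on an infinite Cayley graph.

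The Cauchy--Schwarz/absorption step is likewise unavailable: $\dY(\Amnp f(x),f(x'))^p$ is not a bilinear form in anything --- $Y$ is only a metric space and the energies are $p$-th power functionals --- so there is no pairing in which an $\ell_2$ bound on $\mu^{n}(\mu-\mathrm{Id})$ can be tested, and no quadratic structure allowing you to split off $\Emp(\Amnp f)^{1/2}$ and absorb it into the left-hand side. The paper's argument is instead purely pointwise: on the good set of the local limit theorem one obtains the term $\sqrt{(\log n)/n}\,\Emnp(f)$ directly through \eqref{eq:energy}; on the exceptional set one bounds $\dY(\Amnp f(x),f(x''))^p$ crudely, using \eqref{eq:dsigma-growth} and the triangle inequality, by $(2n+1)^{p-1}$ times a one-step local quantity $\lesssim_{j,d}\Emp(f)$, and the resulting factor $n^{p-1-\theta}$ is made $\le 1/n$ by choosing $\theta$ large. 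In particular the $\frac1n\Emp(f)$ term comes from this exceptional set, not from any ``short-range diagonal part'' of $\mu-\mathrm{Id}$. To repair your proposal you would need to replace the spectral-gap reasoning by the tree local limit theorem (or prove such an estimate yourself) and drop the absorption scheme in favor of this direct splitting.
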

\begin{proof}
Pulling back $f$ to a function on the free group on $S$ (acting on
$Y$ via the quotient map) we may assume that $\Gamma$ is the free
group and $X$ the $2d$-regular tree. Now, \cite[Prop.\
2.9]{Silberman:RandGpT} implies that $\mu^{n+1}(x\to
x')-\mu^{n}(x\to x')$ is typically small: given $x$, except for a
set of $x'$ of $(\mu^{n+1}+\mu^{n})(x\to\cdot)$-mass $\lesssim_d
n^{-\theta}$, the difference is $\lesssim_{d,j,\theta}
\sqrt\frac{\log n}{n} \mu^n(x\to x'),$ where $\theta > 0$ is
arbitrary.

Applying this in Proposition \ref{pro:cancellation} we find that:
$$\Emp (\Amnp f) \lesssim_{p,\uY,d,j,\theta} \sqrt\frac{\log n}{n}\cdot \Emnp (f)
 + n^{-\theta} \max_{\substack{d_X(x,x'')\leq j(n+1)\\2|d_X(x,x'')}} \dY(\Amnp f(x),f(x''))^p.$$

Now
$$\dY(\Amnp f(x),f(x'')^p) \lesssim_{p,\uY}
 \max_{\substack{d_X(x,x')\leq jn\\2|d_X(x,x')}} \dY(f(x'),f(x''))^p,$$
and by the triangle inequality
$$\dY(f(x'),f(x''))^p \leq (2n+1)^{p-1}
    \max_{\substack{d_X(x,x') \leq j\\2|d_X(x,x')}} \dY(f(x),f(x'))^p.$$
Finally, the latter quantity is at most $\lesssim_{j,d} \Emp(f)$ (one
needs that $\mu^j(x\to\cdot)$ is supported on all points $x'$ at
even distance from $x$ at most $j$).  Putting it all together we
have:

$$ \Emp (\Amnp f) \lesssim_{p,\uY,d,j,\theta}
     \sqrt\frac{\log n}{n} \cdot \Emnp (f)
    + n^{p-1-\theta} \Emp (f),$$
    as required.
\end{proof}

It is now clear that (assuming we can arrange $n$ to be large)
what is needed is that $\Emnp(f)$ is not too large compared to $\Emp(f)$.
This is what we establish in the next two sections.

\section{Poincar\'e inequalities on metric spaces}\label{sec:poincare}

It turns out that it is hard to show directly that averaging with
respect to the generators of the random group reduces the energy
(compare \cite{IzekiNayatni:FP_RandGp}).  Instead, it is preferable
to average with respect to some power of the generators, as in
Theorem~\ref{thm:poincare-to-avg}, where we gain by making $n$
large. This requires controlling $\Emnp (f)$ in terms of $\Emp (f)$.
Such a control takes the form of inequalities involving distances
alone rather than centers-of-mass, so that methods from metric
embedding theory can be used to prove them. In this section we state
the inequalities the we need, and show that they hold for functions
from expander graphs to certain target metric spaces ($L_p$ spaces
and $\mathrm{CAT}(0)$ manifolds). In Section~\ref{sec:nagata} we use metric
embeddings to establish these inequalities for additional classes of
metric spaces. In section \ref{sec:mainthm} we then show that a
strong enough Poincar\'e inequality for a particular target is
enough to control averaging so that the random group has the
fixed-point property on that target.

We fix a group $\G$, a discrete $\Gamma$-space $X$, a
$\G$-equivariant random walk $\mu \in \WX$, reversible with respect
to the $\G$-invariant measure $\nu$ which gives finite measure to
any fundamental domain for $\GX$.

\begin{defn} Let $Y$ be a metric space, and  $p\geq 1$.
Let $n > m\geq 1$ be integers. We say that a \emph{Poincar\'e
inequality of exponent $p$} holds if there exists $c>0$ such that
for any $f\in B(X,Y)$,
\begin{equation}\label{eq:poincare-finite}
\Emnp(f) \leq c^p \Emmp(f).
\end{equation}
If $\nu$ itself is a probability measure, we also say that a
Poincar\'e inequalty holds if exists $\bar c>0$ such that for any
$f$,
\begin{equation}\label{eq:poincare-infty}
\int_{X\times X} d\nu(x)d\nu(x') \dY(f(x),f(x'))^p \leq {\bar c}^p
\Emmp(f).
\end{equation}
\end{defn}

Inequality~\eqref{eq:poincare-infty}, when $Y$ is a Hilbert space
and $p=2$ is the classical Poincar\'e inequality. It is sometimes
easier  to work with than the inequality~\eqref{eq:poincare-finite}
(for example when proving such results as the extrapolation lemma
below). It will be inequality~\eqref{eq:poincare-finite}, however,
that will be used for obtaining fixed point properties for the
random group. Note that inequality~\eqref{eq:poincare-infty} can be
thought of as the limit as $n\to\infty$
of~\eqref{eq:poincare-finite}.


\begin{lem}\label{lem:poincare-def} Let $\nu$ be a probability measure.
\begin{enumerate}
\item Assume that \eqref{eq:poincare-infty} holds with the constant $\bar c$.  Then
\eqref{eq:poincare-finite} holds with $c=2 \bar c$ for all $n>m$.
\item Assume that $Y$ is $p$-uniformly convex, and let
$V^{(p)} (f) = \int_X d\nu(x) \dpY(f(x),c_p(f_* \nu))$. Then
$$ V^{(p)}(f) \leq
   \int_{X\times X} d\nu(x)d\nu(x') \dpY(f(x),f(x'))
\leq 2^{p-1} V^{(p)} (f). $$
\end{enumerate}
\end{lem}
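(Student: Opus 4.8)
For part (1), the plan is to combine the triangle inequality in $L_p(\bn)$ with the observation that \eqref{eq:poincare-infty} controls the ``full average'' energy. First I would bound $\Emnp(f)$ above by a constant times the double-integral $\int_{X\times X} d\nu(x)d\nu(x')\,\dY(f(x),f(x'))^p$: indeed, for any $x$, the measure $\mu^n(x\to\cdot)$ is a probability measure, and writing $\dY(f(x),f(x'))\le \dY(f(x),c_p(f_*\nu))+\dY(c_p(f_*\nu),f(x'))$ and applying convexity of $t\mapsto t^p$ (the elementary $(a+b)^p\le 2^{p-1}(a^p+b^p)$) gives, after integrating against $d\mu(x\to x')d\nu(x)$ and using stationarity $\nu*\mu=\nu$,
\[
\Emnp(f)\ \lesssim_p\ \int_X d\nu(x)\,\dY(f(x),c_p(f_*\nu))^p\ =\ V^{(p)}(f).
\]
Then part (2), proved independently, shows $V^{(p)}(f)\le \int_{X\times X}d\nu\,d\nu\,\dY(f(x),f(x'))^p\le{\bar c}^p\Emmp(f)$ by hypothesis, and tracking the constants yields $c=2\bar c$ (the $2^{p-1}$ factors combine to give exactly $2^p$ in the $p$-th powers). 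This part is routine; the only care needed is to match the constant claimed in the statement rather than an unspecified $C_p$.

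For part (2), the key point is the variational characterization of the $p$-center of mass. The upper bound is the easier half: using $\dY(f(x),f(x'))^p\le 2^{p-1}\big(\dY(f(x),c_p(f_*\nu))^p+\dY(c_p(f_*\nu),f(x'))^p\big)$ and integrating against $d\nu(x)d\nu(x')$, each of the two terms integrates to $V^{(p)}(f)$, giving the factor $2^{p-1}$. The lower bound is where the center-of-mass property enters: since $c_p(f_*\nu)$ minimizes $w\mapsto \int_X\dY(f(x),w)^p\,d\nu(x) = d_p(f_*\nu,w)^p$, we have in particular $V^{(p)}(f)=d_p(f_*\nu,c_p(f_*\nu))^p\le d_p(f_*\nu,f(x'))^p$ for every fixed $x'$, i.e.
\[
\int_X \dY(f(x),c_p(f_*\nu))^p\,d\nu(x)\ \le\ \int_X \dY(f(x),f(x'))^p\,d\nu(x).
\]
Integrating this against $d\nu(x')$ gives exactly the left inequality. (One could alternatively invoke \eqref{eq:dsigma-growth} with $\sigma=f_*\nu$, which directly yields $d_p(\sigma,y)^p\ge d_p(\sigma,c_p(\sigma))^p$.)

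I don't anticipate a genuine obstacle here — both parts are short manipulations with $L_p$ triangle inequalities and the defining minimization property of $\cp$. The only mildly delicate point is bookkeeping: ensuring that in part (1) the two applications of $(a+b)^p\le 2^{p-1}(a^p+b^p)$ (once to pass from $\Emnp$ to $V^{(p)}$, once inside part (2)) are arranged so that the final constant is the stated $c=2\bar c$ and not merely $\lesssim_p \bar c$; this forces one to work with $p$-th powers throughout and only take $p$-th roots at the very end.
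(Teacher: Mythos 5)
Your proof of part (2) is essentially the paper's: the lower bound via the defining minimization property of $\cp$ (taking $y=f(x')$ and averaging), and the upper bound via the triangle inequality through $y_0=\cp(f_*\nu)$ with the $2^{p-1}$ convexity estimate. No issues there.

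Your proof of part (1), however, has a genuine gap. You route the estimate through the $p$-center of mass $\cp(f_*\nu)$: first bounding $\Emnp(f)$ by $V^{(p)}(f)$ via the split $\dY(f(x),f(x'))\le\dY(f(x),\cp(f_*\nu))+\dY(\cp(f_*\nu),f(x'))$, and then invoking part (2). But the existence of $\cp(f_*\nu)$ requires $Y$ to be complete and $p$-uniformly convex, a hypothesis that appears only in part (2) of the lemma, \emph{not} in part (1). Part (1) is stated (and is later used) for an arbitrary metric space $Y$ — notice that the Poincar\'e modulus $\Pmodp_Y$ of Definition~\ref{def:small} is defined for any metric space, and Lemma~\ref{lem:Matousek} applies to $Y=\R$, $Y=L_p$, etc.\ without any convexity hypothesis being imported through Lemma~\ref{lem:poincare-def}(1). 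The paper's proof of part (1) avoids this entirely: instead of splitting through the center of mass, it splits through a generic third point $x''\in X$,
\[
\dY(f(x),f(f(x')))^p\le 2^{p-1}\dY(f(x),f(x''))^p+2^{p-1}\dY(f(x''),f(x'))^p,
\]
integrates against $d\nu(x)\,d\mu^n(x\to x')\,d\nu(x'')$, and uses stationarity and reversibility to land directly on the double integral $\int_{X\times X}d\nu\,d\nu\,\dY(f(x),f(x'))^p$. This gives the same constant $2^{p-1}\bar c^p$ (hence $c=2\bar c$) with no convexity hypothesis and no appeal to part (2). If you replace $\cp(f_*\nu)$ by a dummy variable $x''$ integrated against $\nu$, your argument becomes correct and coincides with the paper's. (Minor typo: you integrate against $d\mu(x\to x')$ where it should be $d\mu^n(x\to x')$.)
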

\begin{proof}
For any $x,x',x''\in X$ we raise the triangle inequality to the
$p$th power to obtain:
$$\dY(f(x),f(x'))^p \leq 2^{p-1} \dY(f(x),f(x''))^p +
2^{p-1}\dY(f(x'),f(x''))^p .$$ Integrating against
$d\nu(x)d\mu^n(x\to x')d\nu(x'')$ and using the stationarity and
reversibility of the Markov chain gives:
$$ \Emnp(f) \leq 2^{p-1} \int_{X\times X} d\nu(x)d\nu(x') \dY(f(x),f(x'))^p,$$
whence the first claim. For the proof of the second claim write $y_0
= c_p(f_* \nu)$, and recall that $\int_X d\nu(x) \dY(f(x),y_0)^p
\leq \int_X d\nu(x) \dY(f(x),y)^p$ holds for all $y \in Y$ by
definition of $c_p$. Setting $y=f(x')$ and averaging \wrt $x'$ gives
half of the inequality. For the other half use $d_Y(f(x),f(x')) \leq
d_Y(f(x),y_0) + d_Y(y_0,f(x')).$
\end{proof}

We study metric inequalities for functions on finite Markov chains
(typically, the standard random walks on finite graphs).  In the following
we use the shorthand $(V,\mu,\nu)$ for the data of a finite set $V$
(``vertices''), and a Markov chain $\mu\in\WV$ reversible with respect
to a probability measure $\nu\in\MV$.  Recall that the Markov chain is
\emph{ergodic} if for any $u,v\in V$ there is $n$ such that $\mu^n(u\to v)>0$.
For such a Markov chain the averaging operator $A_\mu^(2)$ acting on
$L^2(\nu)$ is the usual nearest-neighbour averaging operator
$Af(u)=\int_V f(v)d\mu(u\to v)$.  It is well-known that this is a self-adjoint
operator with spectrum contained in $[-1,1]$, with $1$ a simple eigenvalue
(here we use ergodicity).  The \emph{spectral gap} of the chain is then
the difference between $1$ and the second largest eigenvalue.

\begin{defn}\label{def:small} To the metric space $Y$ we associate its
\emph{Poincar\'e modulus} of exponent $p$, $p\geq 2$. Denoted
$\Pmodp_Y(\sigma)$, it is the smallest number $\Pmod$ such that for
any finite reversible ergodic Markov chain $(V,\mu,\nu)$ with
spectral gap at least $\sigma$ and any function $f\colon V\to Y$ we
have \begin{equation}\label{eq:def spectral}
 \int_{V\times V}
d\nu(u)d\nu(v) \dY(f(u),f(v))^p \leq
   \Pmod^p \int_{V\times V} d\nu(u)d\mu(u\to v)
   \dY(f(u),f(v))^p.\end{equation}
   Observe that spectrally expanding both sides of~\eqref{eq:def spectral} shows that for $Y$ Hilbert space,
$\Pmodt_Y(\sigma) = \frac{1}{\sqrt\sigma}$.

We also define the \emph{Local Poincar\'e modulus} of exponent $p$ to be
$$ \Pmodp_Y(\sigma,N) = \sup \left\{ \Pmodp_{Y'}(\sigma) \mid
    Y'\subseteq Y,\, \vert Y'\vert \leq N \right\}.$$
We say that $Y$ has \emph{small Poincar\'e moduli} of exponent $p$
if its local Poincar\'e moduli of that exponent satisfy
\begin{equation}\label{eq:1/4}
 \Pmodp_Y(\sigma,N) \lesssim_{p,\sigma} o\left(\left(\frac{\log N}{\log \log N}\right)^{\frac{1}{2p}}\right).
\end{equation}
Note that a bound of $O(\log N)$ in~\eqref{eq:1/4} holds
true for any metric space, by Bourgain's embedding
theorem~\cite{Bourgain:MetricSpaceEmbed} and~\eqref{eq:hilbert p} below.
\end{defn}

We shall proceed to bound the Poincar\'e modulus for non-Hilbertian
spaces, i.e., to show that a Poincar\'e inequality holds for Markov
chains on these spaces, with the constant bounded by a function of
the spectral gap of the chain. The first case is that of $L_p$. The
proof below is a slight variant of Matou\v{s}ek's extrapolation
lemma for Poincar\'e inequalities; see~\cite{Mat97}, and~\cite[Lem.\
5.5]{BLMN05}; we include it since it has been previously stated for graphs rather than general Markov chains.

\begin{lem}[Matou\v{s}ek extrapolation]\label{lem:Matousek}
Let $(V,\mu,\nu)$ be a reversible Markov chain.
Assume the Poincar\'e inequality \eqref{eq:poincare-infty} holds
with exponent $p\geq 1$ and Poincar\'e modulus $Ap$ for functions $f\colon
V\to \R$. Then for any $q\geq p$ the inequality
\eqref{eq:poincare-infty} holds for such functions with the exponent
$q$ and modulus $4Aq$ and for any $1<q\leq p$ the inequality holds with
exponent $q$ and modulus $Aq$.
\end{lem}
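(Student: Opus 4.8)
The plan is to deduce the exponent-$q$ Poincar\'e inequality~\eqref{eq:poincare-infty} from the exponent-$p$ one by applying the hypothesis to a suitable \emph{power transform} of $f$, using that for real-valued functions one may pass between exponents at the price only of elementary convexity estimates. Write $\mathsf Q_r(f)=\int_{V\times V}d\nu(x)\,d\nu(x')\,|f(x)-f(x')|^r$, so that the hypothesis is $\mathsf Q_p(f)\le (Ap)^p\,\Emmp(f)$ for all $f\colon V\to\R$, and the goal is $\mathsf Q_q(f)\le (Aq)^q\,\mathcal E^{(q)}_{\mu^m}(f)$ for $1<q\le p$, respectively $\mathsf Q_q(f)\le (4Aq)^q\,\mathcal E^{(q)}_{\mu^m}(f)$ for $q\ge p$. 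I would first reduce to the centred quantity $V^{(q)}(f)=\int_V d\nu(x)\,|f(x)-c_q(f_*\nu)|^q$: by the second part of Lemma~\ref{lem:poincare-def} one has $V^{(q)}(f)\le \mathsf Q_q(f)\le 2^{q-1}V^{(q)}(f)$, so it suffices to bound $V^{(q)}(f)$ by a multiple of $\mathcal E^{(q)}_{\mu^m}(f)$, and the factor $2^{q-1}$ absorbed here accounts for part of the constant $4$. Set $c=c_q(f_*\nu)$, $g=f-c$, $s=q/p$, $\Phi(t)=|t|^{s}\sgn(t)$ and $F=\Phi\circ g$; the sign in $\Phi$ is inserted because $g$ changes sign while $\Phi$ is an odd increasing homeomorphism of $\R$, so that the hypothesis applies to $F$ and yields $\mathsf Q_p(F)\le (Ap)^p\,\Emmp(F)$.

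Next I would estimate the two sides of this inequality against the exponent-$q$ quantities for $f$; I describe the case $q\ge p$, the case $1<q<p$ being entirely analogous with $\Phi$ and $\Phi^{-1}$ interchanged. For the left side, superadditivity of $t\mapsto t^{s}$ on $[0,\infty)$ (here $s\ge1$) gives $|\Phi(a)-\Phi(b)|\ge 2^{1-s}|a-b|^{s}$ uniformly in $a,b\in\R$ (the opposite-sign case being the worst, costing the factor $2^{1-s}$), hence $\mathsf Q_p(F)\ge 2^{(1-s)p}\,\mathsf Q_q(f)$. For the right side, the mean-value bound $|\Phi(a)-\Phi(b)|\le s\max(|a|,|b|)^{s-1}|a-b|$, followed by H\"older's inequality in the measure $d\nu(x)\,d\mu^m(x\to x')$ with the conjugate exponents $\tfrac qp$ and $\tfrac{q}{q-p}$, followed by the stationarity of $\nu$ for $\mu^m$ (which turns $\int\!\int d\nu(x)\,d\mu^m(x\to x')\max(|g(x)|,|g(x')|)^{q}$ into $2\,V^{(q)}(f)$), yields $\Emmp(F)\lesssim_{p,q}V^{(q)}(f)^{(q-p)/q}\,\big(\mathcal E^{(q)}_{\mu^m}(f)\big)^{p/q}$. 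Substituting $V^{(q)}(f)\le\mathsf Q_q(f)$ into the first factor, inserting both estimates into $\mathsf Q_p(F)\le(Ap)^p\Emmp(F)$, and cancelling the common power of $\mathsf Q_q(f)$, one arrives at $\mathsf Q_q(f)^{p/q}\lesssim_{p,q}(Ap)^p\,\big(\mathcal E^{(q)}_{\mu^m}(f)\big)^{p/q}$; raising to the power $q/p$ gives an exponent-$q$ Poincar\'e inequality for $f$.

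The one genuinely delicate point is to see that the implied constants collapse to the stated sharp values ($Aq$ for $1<q\le p$, and $4Aq$ for $q\ge p$). Collecting the factors from the three comparisons above, the modulus comes out of order $\tfrac qp\cdot Ap$ times a convexity factor arising from the powers of $2$ above and from H\"older; this factor is $\le 1$ when $q\le p$ (so the modulus is already $\le Aq$, with no blow-up --- which is why the downward range is clean), and it is bounded when $p\le q\le 2p$, but it degrades for $q\gg p$. To secure the clean constant over the whole range $q\ge p$ I expect one must refine the single power transform, as in Matou\v sek's original argument, by a dyadic decomposition of the range of $f$ into bands on which $f$ is, after rescaling, essentially $\{0,1\}$-valued --- observing that on a $\{0,1\}$-valued function $|f(x)-f(y)|^r=|f(x)-f(y)|$, so the exponent-$r$ Poincar\'e inequality is literally the \emph{same} isoperimetric inequality for all $r$ --- and then sum the band contributions with weights making the total loss absolute. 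This book-keeping, rather than any conceptual difficulty, is the main obstacle: the exponent-$q$ inequality itself drops out of the power transform as soon as one accepts a constant depending on $p,q$.
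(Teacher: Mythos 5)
Your chain of estimates is internally sound (the lower bound $|\,|a|^{s}\sgn a-|b|^{s}\sgn b\,|\ge 2^{1-s}|a-b|^{s}$ for $s=q/p\ge 1$, the mean-value bound, H\"older with exponents $\tfrac qp,\tfrac q{q-p}$, and stationarity are all used correctly), but it does not prove the lemma as stated, because the moduli $4Aq$ (for $q\ge p$) and $Aq$ (for $1<q\le p$) are part of the claim. The loss is structural: you convert from the transformed function back to $f$ \emph{pointwise on both sides}. On the left, replacing $|\Phi(g(u))-\Phi(g(v))|^{p}$ by $2^{(1-s)p}|f(u)-f(v)|^{q}$ costs a factor $2^{(s-1)p}$, and after you raise the final inequality to the power $q/p$ this becomes a factor $2^{(q-p)/p}$ in the modulus, i.e.\ roughly $Aq\cdot 2^{(q-p)/p}$ rather than $4Aq$ --- exponentially worse in $q/p$. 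The paper (following Matou\v{s}ek) avoids this by converting only on the Dirichlet-form side: one sets $g=|f|^{q/p}\sgn f$ after shifting $f$ by a constant so that $\int g\,d\nu=0$, so that on the left $\|g\|_{L_p(\nu)}=\|f\|_{L_q(\nu)}^{q/p}$ \emph{exactly} (no pointwise comparison of differences is made there); the zero-mean normalization, convexity, and the exponent-$p$ hypothesis give $\|g\|_{L_p(\nu)}\le Ap\bigl(\int d\nu(u)\,d\mu(u\to v)|g(u)-g(v)|^{p}\bigr)^{1/p}$, and only on that side does one use $|a^{q/p}\pm b^{q/p}|\le\frac qp|a\pm b|\,(a^{q/p-1}+b^{q/p-1})$ together with H\"older and the triangle inequality in $L_{qp/(q-p)}(\nu)$, so the factor $q/p$ enters exactly once and linearly, after which collecting terms and the final comparison $\bigl(\int\!\!\int|f(u)-f(v)|^{q}\,d\nu\,d\nu\bigr)^{1/q}\le 2\|f\|_{L_q(\nu)}$ give the modulus $4Aq$.

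Your proposed repair --- a dyadic decomposition of the range of $f$ into essentially $\{0,1\}$-valued bands --- is not carried out, is not Matou\v{s}ek's original argument (which is precisely the one-sided power-transform computation above), and faces a genuine obstruction: exponent-$q$ Poincar\'e inequalities with $q>1$ do not add up over level-set bands, since $(\sum_k a_k)^q\neq\sum_k a_k^q$; the co-area mechanism you are invoking is an $L_1$ phenomenon. Similarly, your one-sentence treatment of $1<q\le p$ (``interchange $\Phi$ and $\Phi^{-1}$'') is unverified: for $s=q/p<1$ the mean-value inequality reverses direction, so the clean modulus $Aq$ does not simply drop out of the analogous computation. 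To be fair, if all one needed were \emph{some} finite modulus $\lesssim_{p,q}A$ --- which is what the later applications in the paper actually use, since there the exponent is fixed --- your argument for $q\ge p$ would suffice; but as a proof of the stated lemma the missing idea is the asymmetric treatment of the two sides, which is exactly what secures the linear-in-$q$ constant.
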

\begin{proof}
For $u\in V$ set $g(u) = \abs{f(u)}^\fqp\sgn f(u)$. Shifting $f$ by a constant
does not change the claimed inequalities, and using the intermediate value
theorem we may assume $\int gd\nu=0$.
By the convexity of the norm, H\"older's inequality, and the assumed
Poincar\'e inequality, we have:
\begin{multline*}
\norm{g}_{L_p(\nu)}=  \norm{g - \int gd\nu}_{L_p(\nu)}
 \leq       \int d\nu(v) \norm{g-g(v)}_{L_p(\nu)}
 \leq  \left( \int d\nu(v)\norm{g-g(v)}_{L_p(\nu)}^p \right)^\fop \\
 = \left( \int d\nu(u) d\nu(v) \abs{g(u)-g(v)}^p \right)^\fop
 \leq  (Ap) \left( \int d\nu(u) d\mu(u\to v) \abs{g(u)-g(v)}^p \right)^\fop.
\end{multline*}
We next use the elementary inequality
$$ \abs{a^\fqp \pm b^\fqp} \leq \fqp \abs{a\pm b}
   \left( a^{\fqp-1}+ b^{\fqp-1} \right), $$
to deduce that: \begin{eqnarray}\label{eq:holder} \nonumber
\norm{g}_{L_p(\nu)} &\leq& (Aq) \left[ \int d\nu(u)d\mu(u\to v)
\abs{f(u)-f(v)}^p
   \left(\abs{f(u)}^{\fqp-1}+\abs{f(v)}^{\fqp-1}\right)^p
   \right]^\fop\\ &\leq& (Aq) \left[\int d\nu(u)d\mu(u\to v)
   \abs{f(u)-f(v)}^q\right]^\foq \nonumber\\&&\quad\cdot
\left[\int d\nu(u)d\mu(u\to v)  \left(
    \abs{f(u)}^{\fqp-1}+\abs{f(v)}^{\fqp-1}\right)^\frac{qp}{q-p}
\right]^\frac{q-p}{pq},
   \end{eqnarray}
where we used H\"older's inequality.

By the triangle inequality in $L_{qp/(q-p)}$, symmetry and
reversibility, the last term in~\eqref{eq:holder} is at most:
$$2\left[\int d\nu(u)
   \abs{f(u)}^{\frac{q-p}{p}\cdot\frac{qp}{q-p}}\right]^\frac{q-p}{pq} =
   2\norm{f}_{L_q(\nu)}^\frac{q-p}{p}.$$
Recalling that $\abs{g(u)} = \abs{f(u)}^\fqp$, this means
$$\norm{f}_{L_q(\nu)}^\fqp \leq (2Aq)
\left[\int d\nu(u)d\mu(u\to v) \abs{f(u)-f(v)}^q\right]^\foq
\norm{f}_{L_q(\nu)}^{\fqp-1},$$ and collecting terms finally gives
$$ \norm{f}_{L_q(\nu)} \leq (2Aq)
   \left[\int d\nu(u)d\mu(u\to v) \abs{f(u)-f(v)}^q\right]^\foq.$$
To conclude the proof we note that
$$ \left[\int d\nu(u)d\nu(v) \abs{f(u)-f(v)}^q\right]^\foq \leq
    2 \norm{f}_{L_q(\nu)}$$
follows by applying the triangle inequality in $L_q(\nu\times\nu)$
to the functions $(u,v)\mapsto f(u)$ and $(u,v)\mapsto -f(u)$.
\end{proof}

\begin{cor} We have $\Pmodp_\R(\sigma) \leq 2p\frac{1}{\sqrt\sigma}$.
Integrating, this bound also holds for for $\Pmodp_{L_p}(\sigma)$.
\end{cor}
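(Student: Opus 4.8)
The plan is to bootstrap from the Hilbert-space case via Matou\v{s}ek's extrapolation lemma, and then to pass from $\R$ to $L_p$ coordinatewise.

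For the first assertion, recall from Definition~\ref{def:small} that since $\R$ is a Hilbert space, spectral expansion of the two sides of~\eqref{eq:def spectral} gives $\Pmodt_\R(\sigma)=\frac{1}{\sqrt\sigma}$; equivalently, for every finite reversible ergodic Markov chain $(V,\mu,\nu)$ with spectral gap at least $\sigma$, the scalar Poincar\'e inequality~\eqref{eq:def spectral} holds with exponent $2$ and modulus $\frac{1}{\sqrt\sigma}$. In the notation of Lemma~\ref{lem:Matousek}, applied with starting exponent $p=2$, this says the modulus ``$Ap$'' equals $\frac{1}{\sqrt\sigma}$, i.e. $A=\frac{1}{2\sqrt\sigma}$. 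Feeding this into the $q\geq p$ case of Lemma~\ref{lem:Matousek}, taken with $q\geq 2$ arbitrary, we get that~\eqref{eq:def spectral} holds for $\R$-valued functions on $(V,\mu,\nu)$ with exponent $q$ and modulus $4Aq=\frac{2q}{\sqrt\sigma}$. Since the chain was an arbitrary one of spectral gap $\geq\sigma$, this says $\Pmod^{(q)}_\R(\sigma)\leq\frac{2q}{\sqrt\sigma}$ for every $q\geq 2$, which (renaming $q$ as $p$, and recalling that $\Pmodp$ is only defined for $p\geq 2$) is the first assertion.

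For the $L_p$ statement, write $L_p=L_p(\Omega,m)$ and, given $f\colon V\to L_p$, set $f_\omega(v)=f(v)(\omega)\in\R$ for $\omega\in\Omega$. Since $\dY(f(u),f(v))^p=\int_\Omega\abs{f_\omega(u)-f_\omega(v)}^p\,dm(\omega)$ --- here it is essential that the exponent of the metric matches the index of the $L_p$ space --- Fubini's theorem turns each side of~\eqref{eq:def spectral} for $Y=L_p$ into the $\Omega$-integral of the corresponding side of the scalar inequality for $f_\omega$. Applying the bound $\Pmodp_\R(\sigma)\leq\frac{2p}{\sqrt\sigma}$ for almost every $\omega$ and integrating over $\omega$ yields the same bound for $\Pmodp_{L_p}(\sigma)$.

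The argument is essentially routine; the one place that needs attention is the constant bookkeeping in the middle paragraph, namely checking that the ``$4Aq$'' output of Lemma~\ref{lem:Matousek} combines with the base modulus $\frac{1}{\sqrt\sigma}$ to give exactly the claimed $\frac{2p}{\sqrt\sigma}$, and reconciling the factor-$2$ normalization between the energy $\Emp$ appearing in~\eqref{eq:poincare-infty} and the one-step Dirichlet form on the right of~\eqref{eq:def spectral}. The coordinatewise reduction to $L_p$ requires only the standard measure-theoretic justification of Fubini's theorem for a possibly non-discrete $(\Omega,m)$.
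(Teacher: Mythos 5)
Your proof is correct and is exactly the argument the paper intends: the corollary is stated as an immediate consequence of Lemma~\ref{lem:Matousek} applied to the scalar exponent-$2$ base case $\Pmodt_\R(\sigma)=\frac{1}{\sqrt\sigma}$ (giving $4Aq=\frac{2q}{\sqrt\sigma}$), and the word ``Integrating'' in the statement refers precisely to your coordinatewise Fubini passage from $\R$ to $L_p$, using that $\|f(u)-f(v)\|_{L_p}^p=\int_\Omega|f_\omega(u)-f_\omega(v)|^p\,dm(\omega)$. Your constant bookkeeping and the remark about matching the metric exponent to the $L_p$ index are both consistent with the paper.
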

Since Hilbert space embeds isometrically into $L_p$ for all $p\ge
1$, we see that for $p\ge 2$,
\begin{equation}\label{eq:hilbert p}
\Pmodp_{L_2}(\sigma) \leq \Pmodp_{L_p}(\sigma) \leq \frac{2p}{\sqrt{\sigma}}.\end{equation}
\begin{rem} In \cite{Mat97} it is shown
that any $N$-point metric space embeds in $L_p$ with distortion
$\lesssim 1+\frac{1}{p}\log_N$.  It follows that for any metric
space $Y$ and any exponent $p\geq 2$,
$$\Pmodp_Y(\sigma,N) \lesssim \frac{p+\log N}{\sqrt{\sigma}}.$$
\end{rem}

\begin{rem}\label{rem:ozawa}
The argument above was special for $L_p$ spaces. But, using a
different method, it was shown in~\cite{Ozawa04} that for any Banach
lattice $Y$ that does not contain almost isometric copies of every
finite metric space, we have
$\Lambda_Y^{(2)}(\sigma)\lesssim_{Y,\sigma} 1$. While this is not
stated explicitly in~\cite{Ozawa04}, it follows easily from the
proof of Lemma A.4 there; this observation is carried out in detail
in~\cite{NR05}.
\end{rem}

We also note for future reference that the Poincar\'e modulus
behaves well under natural operations on metric spaces. The
(trivial) proof is omitted.
\begin{prop} Fix a function $L(\sigma,N)$ and let $\mathcal{C}$
be the class of metric spaces $Y$ such that $\Pmodp_Y \leq L$.  Then
$\mathcal{C}$ is closed under completion, passing to subspaces,
$\ell_p$ products, and ultralimits. The property of being
$p$-uniformly convex with constant $\uY$ is also preserved by the
same operations, except that that one must pass to convex (\ie
totally geodesic) subspaces.
\end{prop}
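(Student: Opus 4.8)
The plan is to handle the two assertions in turn, running through the four operations in each, and to exploit the fact that in every case the relevant inequality — \eqref{eq:def spectral} for membership in $\mathcal C$, and \eqref{eq:pconvex} for $p$-uniform convexity — is a closed condition imposed on finitely many pairwise distances, so it is inherited by subspaces and survives completions, $\ell_p$ sums, and ultralimits almost formally. For the class $\mathcal C$, unravel ``$\Pmodp_Y\le L$'' as the statement that for every finite reversible ergodic Markov chain $(V,\mu,\nu)$ of spectral gap $\ge\sigma$ and every $f\colon V\to Y$ with $|f(V)|\le N$, inequality \eqref{eq:def spectral} holds with $\Pmod=L(\sigma,N)$. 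Passing to $Y'\subseteq Y$ is immediate, as a finite subset of $Y'$ is a finite subset of $Y$ of the same size. For an $\ell_p$ sum $Y=(\oplus_s Y_s)_p$ write $f=(f_s)_s$; then $d_Y(f(u),f(v))^p=\sum_s d_{Y_s}(f_s(u),f_s(v))^p$, and applying \eqref{eq:def spectral} to each coordinate map $f_s$ (legitimate since $|f_s(V)|\le|f(V)|\le N$, so the single constant $L(\sigma,N)$ serves all factors) and summing the resulting nonnegative inequalities gives the claim. For the completion $\bar Y$: given $f\colon V\to\bar Y$ with $|f(V)|\le N$, approximate it by $g\colon V\to Y$ with $\sup_v d_{\bar Y}(f(v),g(v))<\delta$ and $|g(V)|\le N$; both sides of \eqref{eq:def spectral} are finite sums of $p$-th powers of distances, each moving by at most a constant times $\delta$ when $f$ is replaced by $g$, so letting $\delta\to0$ in the inequality for $g$ yields it for $f$. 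For an ultralimit $Y=\lim_{\mathcal U}Y_i$ and a finite subset $\{y^1,\dots,y^k\}\subseteq Y$ with $k\le N$: given $f$ valued in $\{y^1,\dots,y^k\}$, lift to maps $f_i\colon V\to Y_i$ with $|f_i(V)|\le k$, apply \eqref{eq:def spectral} in $Y_i$ with constant $L(\sigma,k)$, and take the $\mathcal U$-limit of both sides; each is a finite sum of $p$-th powers of distances and hence converges to the corresponding quantity for $f$ in $Y$.

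For $p$-uniform convexity with constant $\uY$, the one genuine point is that \eqref{eq:pconvex} quantifies over \emph{all} geodesic segments, so in every limiting construction one must both produce geodesics and then control \emph{every} geodesic. The uniform remedy is: (i) exhibit, between each pair of points, a distinguished geodesic and verify \eqref{eq:pconvex} for it; (ii) deduce from this that midpoints — hence geodesics — are unique: apply \eqref{eq:pconvex}, at $t=\tfrac12$, to the distinguished geodesic joining two putative midpoints $m_1,m_2$ of a pair $y,z$ — with $w$ its midpoint — taking $x=y$ and $x=z$ in turn, and combine with $\tfrac12\bigl(d_Y(y,w)^p+d_Y(z,w)^p\bigr)\ge\bigl(\tfrac12 d_Y(y,z)\bigr)^p$ to force $m_1=m_2$; (iii) conclude that every geodesic is the distinguished one, hence obeys \eqref{eq:pconvex}. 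Restriction to a \emph{totally geodesic} $Y'\subseteq Y$ is then trivial, since geodesics between points of $Y'$ stay in $Y'$ and the induced metric agrees — this is precisely why one cannot drop convexity of $Y'$. For an $\ell_p$ sum, the distinguished geodesic from $(y_s)_s$ to $(z_s)_s$ is $t\mapsto([y_s,z_s]_t)_s$ (a genuine geodesic, since its speed in $Y$ is $d_Y(y,z)$), and \eqref{eq:pconvex} for it follows by summing the factorwise inequalities; one may additionally note that equality in Minkowski's inequality shows these are in fact all the geodesics, but step (ii) already gives this.

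For the completion, I would first show $\bar Y$ is geodesic: approximate midpoints of $a,b\in\bar Y$ exist by density, and applying \eqref{eq:pconvex} to a midpoint of two such approximate midpoints (together with $\bigl(\tfrac{u+v}{2}\bigr)^p\le\tfrac{u^p+v^p}{2}$) shows they form a Cauchy net, so midpoints exist and are unique, and dyadic subdivision produces geodesics; then \eqref{eq:pconvex} passes to $\bar Y$ by approximating $x,y,z$ and the connecting geodesic by data in $Y$ and using continuity of $d_Y$ and of the midpoint operation. The ultralimit case is where I expect to spend the most care, and it follows the same template: the distinguished geodesic in $Y=\lim_{\mathcal U}Y_i$ from $[(y_i)_i]$ to $[(z_i)_i]$ is $[(\gamma_i(t))_i]$, where $\gamma_i=[y_i,z_i]$ is the (unique) geodesic in $Y_i$; this is a genuine geodesic, and since $\gamma_i(t)=[y_i,z_i]_t$ the inequality \eqref{eq:pconvex} in each $Y_i$ with constant $\uY$ passes to $Y$ on taking the $\mathcal U$-limit of its finitely many terms, after which steps (ii)--(iii) above upgrade this to all geodesics. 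The main obstacle, in short, is exactly that ``$p$-uniformly convex'' is a statement about all geodesics whereas the limiting procedures hand one only a canonical family; establishing uniqueness of geodesics from that canonical family — via the elementary midpoint computation the paper alludes to — is what closes the gap.
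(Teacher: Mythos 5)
Your proposal is correct and follows essentially the route the paper has in mind (it omits the proof as trivial): both halves reduce to the observation that \eqref{eq:def spectral} and \eqref{eq:pconvex} involve only finitely many distances, so they pass to subspaces by restriction, to completions by approximation and continuity, to $\ell_p$ sums by summing the coordinatewise inequalities, and to ultralimits by lifting to representing sequences and taking limits. Your extra care on the convexity half --- producing the canonical geodesics in the limit constructions and then using the midpoint computation to show they are the \emph{only} geodesics, so that \eqref{eq:pconvex} holds for all of them --- is exactly the ``examining midpoints'' point the paper alludes to, and it closes the only non-formal step.
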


In the class of $\mathrm{CAT}(0)$ spaces, a further reduction is possible: it
is enough to establish the Poincar\'e inequality for all the
\emph{tangent cones} of the space $Y$.  This is essentially an
observation from~\cite[Pf of Thm.\ 1.1]{Wang:FixedPoints} (see also
\cite[Lem.\ 6.2]{IzekiNayatani:HarmonicMaps}). It relies on the
equivalent formulation from Lemma~\ref{lem:poincare-def}. We recall
the definition of the \emph{tangent cone} to a metric space $Y$ at
the point $y\in Y$.  Let $\gamma,\gamma'\colon:[0,\epsilon]\to Y$ be
unit-speed geodesic segments issuing from $y$.  For each $t>0$ let
$\theta_{t,t'}$ be the angle such that
$$d_Y(\gamma(t),\gamma'(t')^2 = d_Y(y,\gamma(t))^2 + d_Y(y,\gamma'(t'))^2 -
  2 d_Y(y,\gamma(t))d_Y(y,\gamma'(t')) \cos \theta_{t,t'}.$$
The \emph{Alexandroff angle} between $\gamma,\gamma'$ is defined as
$\theta(\gamma,\gamma') = \limsup_{t,t'\to 0} \theta_{t,t'}$.  It is
easy to check that this provides a pesudometric on the space of
germs of geodesic segments issuing from $y$.  Identifying segments
at angle zero gives the \emph{space of directions} $S_y Y$. Now let
$T_yY$ be the infinite cone over $S_y Y$ with the metric
$\tilde{d}(a \gamma, b\gamma') = \sqrt{a^2 + b^2 - 2ab
\cos\theta(\gamma,\gamma')}$. There is a natural ``inverse of the
exponential map'' $\pi_y \colon Y\to T_y Y$ given by mapping $z\in
Y$ to $d_Y(y,z)\cdot [y,z]$ where $[y,z]$ is the geodesic segment
connecting $y$ to $z$ ($\pi_y(y)$ is the cone point).  By definition
$\pi_y$ preserves distances from $y$, in that
$\tilde{d}(\pi_y(y),\pi_y(z)) = d_Y(y,z)$. The key properties for us
are that when $Y$ is $\mathrm{CAT}(0)$, $\pi_y$ is 1-Lipschitz (in fact, this
is equivalent to the $\mathrm{CAT}(0)$ inequality) and that in that case
$(T_y Y,\tilde{d})$ is a $\mathrm{CAT}(0)$ metric space as well. \cite[Thm.\
II.3.19]{BridsonHaelfliger:NonPosSpc}.   It is also important to
note that if $\sigma$ is a probability measure on $Y$ and $y =
\ct(\sigma)$ then $\ct(\pi_{y*} \sigma) = \pi_y(y)$ (this is since
the fact that $y$ minimizes $z\mapsto d_Y(\sigma,z)$ can be stated
in terms of distances from $y$ alone; see \cite[Prop.\
3.5]{IzekiNayatani:HarmonicMaps}).

The following proposition was proved in an equivalent form
in~\cite{Wang:FixedPoints}.
\begin{prop}\label{pro:wang-trick}
Let $Y$ be a $\mathrm{CAT}(0)$ space.  Then
$$\Pmodt_Y(\sigma,N) \le 2 \sup_{y\in Y} \Pmodt_{T_y Y}(\sigma,N).$$
In particular, $\Pmodt_Y(\sigma) \leq 2 \sup_{y\in Y} \Pmodt_{T_y Y}(\sigma)$.
\end{prop}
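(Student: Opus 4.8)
The plan is to follow the tangent-cone trick of Wang~\cite{Wang:FixedPoints} (compare~\cite[Lem.\ 6.2]{IzekiNayatani:HarmonicMaps}): transport a map into $Y$ to a map into the tangent cone at its barycenter via the inverse-exponential map $\pi_y$, using that on a $\mathrm{CAT}(0)$ space $\pi_y$ is $1$-Lipschitz, so the right-hand side (``energy'') of~\eqref{eq:def spectral} cannot increase, whereas $\pi_y$ preserves distances from $y$, so the associated variance is unchanged. Concretely, fix a finite reversible ergodic Markov chain $(V,\mu,\nu)$ with spectral gap at least $\sigma$, a subset $Y'\subseteq Y$ with $|Y'|\leq N$, and a map $f\colon V\to Y'$. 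Let $y=\ct(f_*\nu)\in Y$, which exists since $Y$ is $2$-uniformly convex and complete, and put $g=\pi_y\circ f\colon V\to T_yY$. Since $\pi_y$ does not increase cardinality, $g(V)\subseteq T_yY$ is a metric space with at most $|f(V)|\leq N$ points, and $T_yY$ is itself $\mathrm{CAT}(0)$ by~\cite[Thm.\ II.3.19]{BridsonHaelfliger:NonPosSpc}.

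I would then chain four facts. First, by \lemref{poincare-def}(2) in $Y$, one has $\int_{V\times V}d\nu(u)d\nu(v)\,\dY(f(u),f(v))^2\leq 2\,V^{(2)}(f)$, where $V^{(2)}(f)=\int_V d\nu(u)\,\dY(f(u),y)^2$. Second, because $\tilde d(\pi_y(y),\pi_y(z))=\dY(y,z)$ and, by~\cite[Prop.\ 3.5]{IzekiNayatani:HarmonicMaps}, $\ct(g_*\nu)=\pi_y(y)$, one gets $V^{(2)}(f)=\int_V d\nu(u)\,\tilde d(g(u),\pi_y(y))^2=V^{(2)}(g)$. Third, by \lemref{poincare-def}(2) in the $\mathrm{CAT}(0)$ space $T_yY$, followed by the defining inequality~\eqref{eq:def spectral} for the metric space $g(V)$ (which has at most $N$ points),
\[
V^{(2)}(g)\leq\int_{V\times V}d\nu(u)d\nu(v)\,\tilde d(g(u),g(v))^2
\leq\Pmodt_{T_yY}(\sigma,N)^2\int_{V\times V}d\nu(u)d\mu(u\to v)\,\tilde d(g(u),g(v))^2 .
\]
Fourth, since $\pi_y$ is $1$-Lipschitz on the $\mathrm{CAT}(0)$ space $Y$, the last integral is at most $\int_{V\times V}d\nu(u)d\mu(u\to v)\,\dY(f(u),f(v))^2$. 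Assembling these four bounds, the ratio that defines $\Pmodt_{Y'}(\sigma)$ is at most $2\,\Pmodt_{T_yY}(\sigma,N)^2$, hence $\Pmodt_{Y'}(\sigma)\leq\sqrt2\,\Pmodt_{T_yY}(\sigma,N)\leq 2\sup_{y\in Y}\Pmodt_{T_yY}(\sigma,N)$. Taking the supremum over admissible $Y'$ (and over chains) yields the first inequality; running the identical computation with the cardinality constraint dropped (equivalently, letting $N\to\infty$) gives the ``in particular'' statement.

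The estimates themselves are routine invocations of \lemref{poincare-def} together with the cited structural properties of tangent cones, so the real content — and the step I expect to require the most care — is checking that those properties genuinely apply: that $Y$ being $\mathrm{CAT}(0)$ forces both $\pi_y$ to be $1$-Lipschitz and $(T_yY,\tilde d)$ to be $\mathrm{CAT}(0)$ (hence $2$-uniformly convex, so that \lemref{poincare-def}(2) is available there), and, most crucially, that $\pi_y$ carries the barycenter $y=\ct(f_*\nu)$ to the barycenter $\pi_y(y)=\ct(g_*\nu)$ of the pushforward. This last fact is exactly what makes $V^{(2)}(f)=V^{(2)}(g)$, and therefore what lets a Poincar\'e inequality valid for $g$ in the (possibly much simpler) cone $T_yY$ be pulled back to one for $f$ in $Y$. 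The only mild loss is the factor $\sqrt2$ (well within the claimed $2$), coming from the comparison between pairwise-distance sums and variances in \lemref{poincare-def}(2); it could be shaved by a more careful argument, but this is unnecessary for the stated bound.
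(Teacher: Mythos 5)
Your proof is correct and follows the same tangent-cone strategy as the paper: push $f$ through $\pi_y$ at the barycenter, invoke barycenter preservation $\ct(g_*\nu)=\pi_y(y)$, the distance-from-center preservation, and the $1$-Lipschitz property of $\pi_y$, all sandwiched with Lemma~\ref{lem:poincare-def}(2). The only cosmetic difference is that the paper isolates the auxiliary quantity $v(Y)$ and proves $v(Y)\leq\sup_{y}v(T_yY)$ before converting back to Poincar\'e moduli, while you chain the four inequalities directly; both arguments actually yield the sharper factor $\sqrt2$, comfortably within the stated bound of $2$.
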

\begin{proof}
Let $(V,\mu,\nu)$ be a finite Markov chain as above.  For a $\mathrm{CAT}(0)$
space $Y$ let $v(Y)$ be minimal such that for all $f\colon V\to Y$
we have
$$ V^{(2)}(f) \leq v(Y) \Emmt(f).$$
Lemma \ref{lem:poincare-def} shows that the constant $c$ in the
Poincar\'e inequality for functions from $V$ to $Y$ satisfies $v(Y)
\leq c \leq 2v(Y)$.  It thus remains to show that $v(Y) \leq
\sup_{y\in Y} v(T_y Y)$.  Indeed, let $f\colon X\to Y$, and let $y =
\ct(f_*\nu)$, $\tilde{f} = \pi_y \circ f$.  As noted above we have
$\ct(\tilde{f}_*\nu) = \pi_y(y)$, and since distances from $y$ are
preserved that $V^{(2)} (f) = V^2(\tilde{f})$. Since $\pi_y$ is
non-expansive, $\Emmt(\tilde{f}) \leq \Emmt(f)$. It follows that
$V^{(2)} (f) \leq v(T_{\ct(f_*\nu)}) \Emmp(f)$ and we are done.
\end{proof}

Note that when $Y$ is a Riemannian manifold, the tangent cone constructed
above is isometric to the ordinary tangent space at $y$, equipped with the
inner product given by the Riemannian metric at that point.  in other words,
the tangent cones of a manifold are all isometric to Hilbert spaces.
An approximation argument also shows that
$\Pmodt_Y(\sigma) \geq \Pmodt_{T_y Y}(\sigma)$ for all $y\in Y$.

\begin{cor}
Let $Y$ be a Hilbert manifold with a $\mathrm{CAT}(0)$ Riemannian metric
(for example, a finite-dimensional simply connected Riemannian manifold
of non-positive sectional curvature).  Then
$\frac{1}{\sqrt{\sigma}} \leq \Pmodt_Y(\sigma) \leq \frac{2}{\sqrt\sigma}$.
\end{cor}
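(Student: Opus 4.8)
Both inequalities follow from facts already in place, with essentially no new work. For the upper bound I would apply Proposition~\ref{pro:wang-trick} --- whose hypotheses (that $\pi_y$ is $1$-Lipschitz and that $\ct(\pi_{y*}\sigma)=\pi_y(y)$ whenever $y=\ct(\sigma)$) are exactly the $\mathrm{CAT}(0)$/Riemannian facts recalled above --- to get
$$\Pmodt_Y(\sigma)\ \le\ 2\sup_{y\in Y}\Pmodt_{T_yY}(\sigma).$$
Since $Y$ is Riemannian, each Alexandroff tangent cone $T_yY$ is isometric to the tangent space equipped with the inner product given by the metric tensor at $y$, hence is a Hilbert space. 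The spectral computation recorded right after~\eqref{eq:def spectral} gives $\Pmodt_H(\sigma)=1/\sqrt\sigma$ for every nonzero Hilbert space $H$; that computation is purely Hilbertian, so no finiteness of dimension is needed (and in any case, for a fixed finite chain $(V,\mu,\nu)$ one may replace $H$ by the finite-dimensional linear span of $f(V)$). Hence $\sup_{y\in Y}\Pmodt_{T_yY}(\sigma)=1/\sqrt\sigma$ and $\Pmodt_Y(\sigma)\le 2/\sqrt\sigma$.

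For the lower bound I would invoke the approximation remark stated just before the corollary, namely $\Pmodt_Y(\sigma)\ge\Pmodt_{T_yY}(\sigma)$; together with $\Pmodt_{T_yY}(\sigma)=1/\sqrt\sigma$ this gives $\Pmodt_Y(\sigma)\ge 1/\sqrt\sigma$. (One can also argue directly, bypassing the approximation step: since $Y$ is a geodesic space of positive dimension it contains an isometric copy of some interval $[0,\delta]\subset\R$; the ratio of the two sides of~\eqref{eq:def spectral} is invariant under affine rescaling of the test function, and any function from a finite chain into $\R$ has bounded range, so it may be rescaled to land in $[0,\delta]$, whence $\Pmodt_Y(\sigma)\ge\Pmodt_\R(\sigma)$, while testing~\eqref{eq:def spectral} on the eigenfunction for the second eigenvalue of a chain with spectral gap exactly $\sigma$ gives $\Pmodt_\R(\sigma)\ge 1/\sqrt\sigma$.) Combining the two estimates yields $1/\sqrt\sigma\le\Pmodt_Y(\sigma)\le 2/\sqrt\sigma$.

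I do not expect any real obstacle: the substance is Proposition~\ref{pro:wang-trick} and the identification of a Riemannian tangent cone with a Hilbert space, both already available. The only points worth a careful sentence are that the Hilbert-space value $\Pmodt_H(\sigma)=1/\sqrt\sigma$ persists in infinite dimensions (dealt with above) and that the factor $2$ in the upper bound is precisely the loss in Proposition~\ref{pro:wang-trick}, which this argument does not remove.
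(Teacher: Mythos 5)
Your proposal is correct and matches the (implicit) argument the paper intends: the paper never writes a proof for this corollary, but the three ingredients it places just before it --- Proposition~\ref{pro:wang-trick}, the observation that Riemannian tangent cones are Hilbert spaces, and the remark $\Pmodt_Y(\sigma)\ge\Pmodt_{T_yY}(\sigma)$ --- combine exactly as you describe. Your alternative direct argument for the lower bound (embedding an interval, rescaling, and testing against a second eigenfunction) is a nice self-contained substitute for the unproved approximation remark, and your note that $\Pmodt_H(\sigma)=1/\sqrt\sigma$ survives in infinite dimensions because any test function has finite-dimensional range closes the only gap one might worry about.
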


\section{Padded decomposability and Nagata
dimension}\label{sec:nagata}

We start by recalling some definitions and results
from~\cite{LN05-via}. Let $(X,d_X)$ be a metric space.
Given a partition $\mathscr P$ of $X$ and $x\in X$ we denote by
$\mathscr P(x)$ the unique element of $\mathscr P$ containing $x$.
For $\Delta>0$, a distribution $\Pr$ over partitions of $X$ is
called a $\Delta$-bounded stochastic decomposition if
$$
\Pr\left[\forall\  C\in \mathscr P,\ \diam(C)\le \Delta \right]=1,
$$
i.e., almost surely with respect to $\Pr$ partitions of $X$ contain
only subsets whose diameter is bounded by $\Delta$. Given
$\e,\delta>0$ we shall say that a $\Delta$-bounded stochastic
decomposition $\Pr$ is $(\e,\delta)$-padded if for every $x\in X$,
$$
\Pr\left[\mathscr P(x)\supseteq B_X(x,\e\Delta)\right]\ge \delta.
$$
Here, and in what follows, $B_X(x,r)\eqdef \{y\in X:\ d_X(x,y)\le
r\}$ denotes the closed unit ball of radius $r$ centered at $x$.

Given two metric spaces $(Y,d_Y)$ and $(Z,d_Z)$, and $X\subseteq Y$,
we denote by $e(X,Y,Z)$ the infimum over all constant $K$ such that
every Lipschitz function $f:X\to Z$ can be extended to a function
$\tilde f:Y\to Z$ such that $\|\tilde{f}\|_{\Lip}\le
K\cdot \|f\|_{\Lip}$. The {\em absolute Lipschitz extendability}
constant of $(X,d_X)$, denoted $\Ae(X)$, is defined as
$$
\Ae(X)\eqdef \sup\left\{e(X,Y,Z):\ Y\supseteq X,\ Z\ \text{a
Banach space}\right\}.
$$
In words, the inequality $\Ae(X)<K$ implies that any Banach space
valued Lipschitz mapping on $X$ can be extended to any metric space
containing $X$ such that the Lipschitz constant of the extension
grows by at most a factor of $K$. This notion was introduced
in~\cite{LN05-via}, where several classes of spaces
were shown to be absolutely extendable. We note that in the
extension theorems we quote below from~\cite{LN05-via} the role of
the target space being a Banach space is very weak, and it can also
be, for example, any $\mathrm{CAT}(0)$ space; we refer to~\cite{LN05-via}
for a discussion of this issue.

The following theorem was proved in~\cite{LN05-via}.

\begin{theorem}[Absolute extendability
criterion~\cite{LN05-via}]\label{thm:from inv} Fix $\e,\delta\in
(0,1)$ and assume that $(X,d_X)$ admits a $2^k$-bounded
$(\e,\delta)$-padded stochastic decomposition for every $k\in \Z$.
Then
$$
\Ae(X)\lesssim \frac{1}{\e\delta}.
$$
\end{theorem}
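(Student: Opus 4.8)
The plan is to produce the extension by randomized averaging over the given stochastic decompositions. Fix a Banach space $Z$, a metric space $Y\supseteq X$, and a Lipschitz map $f\colon X\to Z$ with $\|f\|_{\Lip}=1$; after the usual harmless reductions (we may take $X$ and $Z$ separable and the decompositions defined on a standard probability space, so all the random objects below are measurable) the goal is a random map $\Pi\colon Y\to X$ such that: (i) $\Pi(y)=y$ almost surely whenever $y\in X$; (ii) $d_Y\bigl(y,\Pi(y)\bigr)\lesssim d_Y(y,X)$ almost surely; (iii) $\E\, d_X\bigl(\Pi(y),\Pi(y')\bigr)\lesssim \tfrac{1}{\e\delta}\,d_Y(y,y')$ for all $y,y'\in Y$. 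Given such a $\Pi$, set $\tilde f(y)\eqdef \E\bigl[f(\Pi(y))\bigr]$: by (ii) the random variable $f(\Pi(y))$ is bounded, so the integral makes sense and lies in $\overline{\conv}\,f(X)\subseteq Z$; by (i) it extends $f$; and Jensen's inequality together with (iii) gives $\|\tilde f(y)-\tilde f(y')\|\le \E\, d_X(\Pi(y),\Pi(y'))\lesssim \tfrac{1}{\e\delta}d_Y(y,y')$, i.e.\ $\Ae(X)\lesssim \tfrac1{\e\delta}$. (Equivalently, and perhaps more transparently, $\Pi$ can be presented as a Lipschitz partition of unity: weights $\varphi_C\colon Y\to[0,1]$ summing to $1$, with $\varphi_C(y)>0$ only when $d_Y(y,\mathrm{rep}(C))\lesssim d_Y(y,X)$, and with small Lipschitz overlap $\sum_C|\varphi_C(y)-\varphi_C(y')|\lesssim \tfrac1{\e\delta}\cdot \tfrac{d_Y(y,y')}{d_Y(y,X)}$ in the local regime; then $\tilde f=\sum_C\varphi_C\,f(\mathrm{rep}(C))$, and the $\Lip$ bound follows from the overlap bound after subtracting a common value $f(\mathrm{rep}(C_0))$, which is legitimate because $\sum_C(\varphi_C(y)-\varphi_C(y'))=0$ and all active representatives lie within $O(d_Y(y,X))$ of $y$.)

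It remains to build $\Pi$ from the decompositions $\Pr_k$ at scales $2^k$, $k\in\Z$. First I would treat a single scale. Fix $k$ and sample a $2^{k}$-bounded $(\e,\delta)$-padded partition $\mathscr P$ of $X$; for $x\in X$ the padding radius $\rho(x)\eqdef d_X\bigl(x,\,X\setminus\mathscr P(x)\bigr)$ is $1$-Lipschitz on $X$, and the padding hypothesis yields the key estimate $\E[\rho(x)]\ge \delta\cdot\e 2^{k}$ — it is the appearance of $\rho$ as a denominator in the averaging formula that ultimately produces the factor $\tfrac{1}{\e\delta}$. Concretely, for $y$ with $d_Y(y,X)\asymp 2^{k}$ one routes $y$, by a short-range rule of range $O(2^k)$, to the representative of a nearby cluster, weighting clusters by their padding-radius functions; (ii) is automatic from the diameter bound. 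For (iii) with $y,y'$ at the same scale the padding is exactly what is needed: two points within distance $\e 2^{k}$ of each other lie in the same cluster, hence receive the same $\Pi$-value, with probability $\ge\delta$, and telescoping along a chain of $O(d_Y(y,y')/(\e 2^k))$ steps in $X$ — using the pointwise triangle inequality $d_X(\Pi(y),\Pi(y'))\le\sum_i d_X(\Pi(z_i),\Pi(z_{i+1}))$ — converts the per-step non-merging probability $\le 1-\delta$ (against a deterministic per-step cost $O(2^k)$) into the bound $\lesssim \tfrac1{\e\delta}d_Y(y,y')$. The complementary regime $d_Y(y,y')\gtrsim\max\{d_Y(y,X),d_Y(y',X)\}$ needs no randomness: it follows from (ii) and the triangle inequality through near-closest points of $y$ and $y'$ in $X$.

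I expect the cross-scale gluing to be the main obstacle. Two points $y,y'$ that are close in $Y$ may sit at genuinely different distances from $X$, so the single-scale recipe would hand them partitions $\mathscr P_k$ and $\mathscr P_{k'}$ with $k\ne k'$; if the $\Pr_k$ were sampled independently then $\E\, d_X(\Pi(y),\Pi(y'))$ would be of the order of the scale rather than of $d_Y(y,y')$, and (iii) would fail. The remedy is to replace the independent family by a single consistent \emph{hierarchy}: a random nested sequence in which $\mathscr P_k$ refines $\mathscr P_{k+1}$ for every $k$, built top-down by sampling the coarsest scale and then, inside each already chosen cluster, invoking the hypothesis to refine it at the next scale down while keeping the $(\e,\delta)$-padding valid at that level. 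With the hierarchy in place, sliding $y$ toward $X$ only refines its cluster, so the scale changes telescope. A second, more routine, technical point is that $X$ is not assumed doubling, so a priori unboundedly many clusters can be active near a point of $Y$; this is why the weights must come from the padding-radius functions $x\mapsto d_X(x,X\setminus C)$ — for which, at a fixed point of $X$, exactly one cluster is active — rather than from naive distance-to-cluster cutoffs, and some care is needed to propagate this controlled behavior to points of $Y$. Once $\Pi$ is assembled, the bulk of the work is the bookkeeping: verifying (i)–(iii) simultaneously across the same-scale, cross-scale, and far-apart regimes, with every constant coming only from the per-scale padding estimate $\E[\rho]\gtrsim\e\delta\cdot 2^k$ and from absolute dyadic-summation losses.
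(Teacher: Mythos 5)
The paper does not actually prove this theorem: it cites it from the Lee--Naor reference \cite{LN05-via} and uses it as a black box, so the comparison must be against that external source, not an in-paper argument.

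Your high-level plan --- Whitney-type extension driven by the padded decompositions, with the factor $1/(\e\delta)$ coming from the expected padding depth --- is the right one and is the spirit of the Lee--Naor proof. But the specific ``random projection plus Jensen'' formulation you lead with has a genuine gap, and the alternative you present as ``equivalent'' is in fact the essential repair, not a cosmetic reformulation. Concretely, take $y,y'$ in the shell $d_Y(\cdot,X)\asymp 2^k$ with $d_Y(y,y')=t\ll\e 2^k$. Your telescoping chain then has fewer than one step, so it degenerates to a single comparison, and the only information the padding hypothesis gives is $\Pr\bigl[\mathscr P(\pi(y))\ne\mathscr P(\pi(y'))\bigr]\le 1-\delta$. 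This separation probability is a \emph{constant}, not $O\bigl(t/(\e 2^k)\bigr)$; the padding hypothesis gives $\Pr[\rho(x)\ge\e 2^k]\ge\delta$ and hence $\E\rho(x)\gtrsim\e\delta 2^k$, but it gives no upper bound whatsoever on $\Pr[\rho(x)<s]$ for small $s$ (the depth $\rho$ could perfectly well be $0$ with probability $1-\delta$). Consequently $\E\,d_X(\Pi(y),\Pi(y'))$ has a floor of order $(1-\delta)2^k$ as $t\to 0$, so your property (iii) fails and the map $y\mapsto\E[f(\Pi(y))]$ is not Lipschitz at small scales. This cannot be patched by telescoping, since the per-step cost is genuinely $\Theta(2^k)$ and the per-step escape probability does not vanish with the step size.

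What Lee--Naor actually do is bypass the pointwise projection entirely and build, from the $1$-Lipschitz depth functions $x\mapsto d_X(x,X\setminus C)$ together with a random threshold (equivalently, a truncation of those depths), genuinely Lipschitz weights $\varphi_C$ whose total-variation overlap $\sum_C|\varphi_C(y)-\varphi_C(y')|$ \emph{does} satisfy the claimed $O\bigl(d_Y(y,y')/(\e\delta 2^k)\bigr)$ bound, the $\e\delta$ loss coming only from dividing by the expected depth. The cancellation in $\sum_C(\varphi_C(y)-\varphi_C(y'))=0$ is then exactly what lets $|\tilde f(y)-\tilde f(y')|$ beat the non-vanishing expected displacement of a random projection. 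You gesture at this, but present it as interchangeable with the random-projection argument; it is not, and the random-projection version is the one you actually try to verify, with the faulty chaining step. A second, more minor, issue: the cross-scale gluing in \cite{LN05-via} does not require nested (hierarchical) refinements of the decompositions, which the hypothesis does not provide; they use Lipschitz cutoffs in $\log d_Y(y,X)$ and absorb the scale-change cost into the dyadic sum. Your instinct that ``independent scales will fight each other'' is right, but the remedy you propose (building a consistent hierarchy) would require an additional construction that is neither given nor obviously available, whereas the smooth scale-cutoff route avoids the issue altogether.
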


In~\cite{LN05-via} several classes of spaces were shown to satisfy
the conditions of Theorem~\ref{thm:from inv}, including subsets of
Riemannian surfaces of bounded genus and doubling metric spaces. For
our applications we need to enrich the repertoire of these spaces.
We do so by relating the notion of padded decomposability to having
finite Nagata dimension, and using results from~\cite{LS05} which
bound the Nagata dimension of various classes of spaces (which will
be listed shortly).

Let $(X,d_X)$ be a metric space. Following~~\cite{Nagata83,LS05}, given $\gamma\ge 1$ and $d\in
\mathbb N$ we say that $X$ has Nagata dimension at most $d$
with constant $\gamma$ if for every $s>0$ there exists a family of
subsets $\mathscr C\subseteq 2^X\setminus\{\emptyset\}$ with the
following properties.

\begin{enumerate}
\item $\mathscr C$ covers $X$, i.e. $\bigcup_{C\in \mathscr C} C=X$.
\item For every $C\in\mathscr C$, $\diam(C)\le \gamma s$.
\item For every $A\subseteq X$ with $\diam(A)\le s$, we have $ \left|\left\{C\in \mathscr C:\ C\cap A\neq \emptyset \right\}\right|\le
d+1$.
\end{enumerate}
The infimum over all $\gamma$ for which $X$ has Nagata dimension at
most $d$ with constant $\gamma$ will be denoted $\gamma_d(X)$. If no
such $\gamma$ exists we set $\gamma_d(X)=\infty$. Finally,
the Nagata dimension of $X$ is defined as
$$
\dim_N(X)=\inf\left\{d\ge0: \ \gamma_d(X)<\infty\right\}.
$$
It was proved in~\cite{LS05} that $X$ has finite Nagata dimension if
and only if $X$ embeds quasisymmetrically into a product of finitely
many trees.

\begin{lem}[Bounded Nagata dimension implies padded
decomposability] Let $(X,d_X)$ be a metric space, $\gamma\ge 1$ and
$d\in \mathbb N$. Assume that $\gamma_d(X)< \gamma<\infty$. Then for
every $k\in \Z$, $X$ admits a $2^k$-bounded
$\left(\frac{1}{100\gamma d^2},\frac{1}{d+1}\right)$-padded
stochastic decomposition.
\end{lem}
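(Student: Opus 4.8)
The plan is to build the required random partition out of a Nagata-dimension cover by the standard ``random minimum label'' device, after one preliminary \emph{thickening} of the cover.

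Fix $k\in\Z$ and put $\Delta=2^k$. Set $\rho=\e_0\Delta$ with $\e_0=\frac{1}{5\gamma+2}$ and $s=5\rho$, so that $\gamma s+2\rho=(5\gamma+2)\rho=\Delta$ while $4\rho<s$. Since $\gamma_d(X)<\gamma$, the space $X$ has Nagata dimension at most $d$ with constant $\gamma$, so one may fix a cover $\mathscr C\subseteq 2^X\setminus\{\emptyset\}$ with $\diam(C)\le\gamma s$ for every $C\in\mathscr C$, such that every $A\subseteq X$ with $\diam(A)\le s$ meets at most $d+1$ members of $\mathscr C$. For $C\in\mathscr C$ write $C^\rho=\{x:d_X(x,C)\le\rho\}$ for its $\rho$-thickening, and likewise $B_X(A,\rho)=\{x:d_X(x,A)\le\rho\}$ for a set $A$. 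I would record three elementary properties of the thickened family. First, $\{C^\rho\}_{C\in\mathscr C}$ still covers $X$ and $\diam(C^\rho)\le\gamma s+2\rho=\Delta$. Second, for any set $A$ one has $A\cap C^\rho\neq\emptyset\iff C\cap B_X(A,\rho)\neq\emptyset$, and $\diam(B_X(A,\rho))\le\diam(A)+2\rho$; hence if $\diam(A)\le s-2\rho=3\rho$ then at most $d+1$ of the sets $C^\rho$ meet $A$. In particular every point of $X$ lies in at most $d+1$ of the $C^\rho$, and every ball $B_X(x,\rho)$ (diameter $\le 2\rho\le 3\rho$) meets at most $d+1$ of them. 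Third --- and this is the reason for thickening --- every ball $B_X(x,\rho)$ is \emph{contained} in $C^\rho$ for any $C\in\mathscr C$ with $x\in C$.

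Next, draw i.i.d.\ uniform variables $(U_C)_{C\in\mathscr C}$ in $[0,1]$ and let $\psi(x)$ be the $C\in\mathscr C$ minimizing $U_C$ among those with $x\in C^\rho$; by the first two properties this is a minimum over a nonempty finite set, attained at a unique $C$ almost surely. Let $\mathscr P$ be the partition of $X$ into the fibres $P_C=\psi^{-1}(C)$. Since $P_C\subseteq C^\rho$, we get $\diam(P_C)\le\Delta$, so $\mathscr P$ is $2^k$-bounded. For padding, fix $x$ and set $B=B_X(x,\e_0\Delta)=B_X(x,\rho)$, and let $\mathscr C_B=\{C\in\mathscr C:C^\rho\cap B\neq\emptyset\}$; by the cover property $\mathscr C_B\neq\emptyset$, and by the second property $|\mathscr C_B|\le d+1$. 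Let $C^\ast$ be the element of $\mathscr C_B$ carrying the smallest label $U_C$; since the $U_C$ are i.i.d., $C^\ast$ is a uniformly random element of the fixed finite set $\mathscr C_B$. On the event $\{B\subseteq (C^\ast)^\rho\}$ one has $B\subseteq\mathscr P(x)$: for any $y\in B$, every $C$ with $y\in C^\rho$ lies in $\mathscr C_B$, whereas $C^\ast\in\mathscr C_B$ also contains $y$ and carries the $\mathscr C_B$-minimal label, so $\psi(y)=C^\ast$; thus $\psi\equiv C^\ast$ on $B$, whence $B\subseteq P_{C^\ast}=\mathscr P(x)$. Therefore
$$\Pr\big[\mathscr P(x)\supseteq B_X(x,\e_0\Delta)\big]\ \ge\ \Pr\big[B\subseteq (C^\ast)^\rho\big]\ =\ \frac{\#\{C\in\mathscr C_B:B\subseteq C^\rho\}}{|\mathscr C_B|}\ \ge\ \frac1{d+1},$$
the numerator being $\ge 1$ by the third property and the denominator $\le d+1$. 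So $\mathscr P$ is a $2^k$-bounded $(\e_0,\tfrac1{d+1})$-padded stochastic decomposition; since $100\gamma d^2\ge 100\gamma\ge 5\gamma+2$ for $\gamma\ge1$ and $d\ge 1$, we have $\frac1{100\gamma d^2}\le\e_0$, and as the containment $\mathscr P(x)\supseteq B_X(x,\e\Delta)$ is monotone in $\e$, the very same $\mathscr P$ is also $(\frac1{100\gamma d^2},\frac1{d+1})$-padded, which is the asserted bound.

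The only non-routine ingredient is the thickening step. A raw Nagata cover need not enjoy the third property above --- for instance the half-open intervals $[ms,(m+1)s)$ cover $\R$ with Nagata dimension $1$ and a bounded constant, yet no nondegenerate interval around an integer multiple of $s$ lies inside a single one of them --- and it is precisely that property that produces the lower bound on the padding probability. Thickening by $\rho$ restores it, at the cost of shrinking the ``input scale'' at which bounded multiplicity still holds from $s$ to $s-2\rho$, which is what forces $\rho$ to be a constant-order ($\asymp 1/\gamma$) fraction of $\Delta$; the very lossy bound $\tfrac1{100\gamma d^2}$ in the statement is then a comfortable consequence. Everything else is the by-now-standard conversion of a bounded-multiplicity cover into a random partition via i.i.d.\ minimum labels (routine measurability issues, trivial if $X$ is separable, are suppressed).
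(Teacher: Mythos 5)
Your argument is correct, but it follows a genuinely different route from the one in the paper. The paper's proof does not randomize over the cover elements at all: it quotes (part of) Proposition 4.1 of \cite{LS05}, a strengthened ``colored'' covering statement obtained by iterating the Nagata condition, which at scale $r=50\gamma d^2$ produces families $\mathscr B_0,\dots,\mathscr B_d$ of pairwise disjoint sets of diameter at most $r^{j+1}$ such that every ball of radius $r^j$ is contained in some member; the random partition is then obtained by drawing a uniformly random permutation $\pi$ of the $d+1$ colors and greedily carving $\mathscr B_{\pi(0)},\mathscr B_{\pi(1)},\dots$ out of one another, the padding event being simply $\pi(0)=i$ for the color $i$ of a set swallowing $B_X(x,r^j)$. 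You instead work straight from the definition of Nagata dimension: you restore the ball-swallowing property by $\rho$-thickening a raw bounded-multiplicity cover and randomize with i.i.d.\ labels and a minimum-label rule, the probability $\frac{1}{d+1}$ coming from the uniformity of the minimizer over the at most $d+1$ thickened sets meeting $B_X(x,\rho)$. What each buys: the paper's argument lives on a finite probability space (the $(d+1)!$ permutations) and delegates all geometric work to \cite{LS05}, so no tie-breaking or measurability caveats arise; yours is self-contained and in fact sharper, giving a padding parameter $\e\asymp 1/\gamma$ independent of $d$ (versus $\asymp 1/(\gamma d^2)$ in the statement), which through Theorem~\ref{thm:from inv} yields $\Ae(X)\lesssim \gamma_d(X)\, d$ --- exactly the linear dependence on the dimension that the paper conjectures after Corollary~\ref{coro:list}. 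Two small points to tidy, both absorbed by your generous slack in constants: with closed $\rho$-neighborhoods the equivalence $A\cap C^\rho\neq\emptyset\iff C\cap B_X(A,\rho)\neq\emptyset$ can fail when the infimum defining $d_X(\cdot,C)$ is not attained, so either use open thickenings (padding by a radius slightly below $\rho$) or take $s=6\rho$ so the multiplicity property is invoked strictly below scale $s$; and when $\mathscr C$ is uncountable the almost-sure uniqueness of the minimal label should be replaced by a deterministic tie-breaking rule, after which the padding bound is still obtained from a measurable sub-event depending only on the finitely many labels indexed by $\mathscr C_B$.
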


\begin{proof} It is easy to iterate the definition of Nagata
dimension to prove the following fact, which is (part of)
Proposition 4.1 in~\cite{LS05} (with explicit, albeit sub-optimal,
estimates, that can be easily obtained from an examination of the proof
in~\cite{LS05}). Let $r=50\gamma\cdot d^2$. For every $j\in \Z$
there exists a family of subsets $\B\subseteq
2^{X}\setminus\{\emptyset\}$ with the following properties.
\begin{enumerate}
\item For every $x\in X$ there exists $B\in \B$ such that
$B_X\left(x,r^j\right)\subseteq B$.
\item $\B=\bigcup_{i=0}^d \B_i$, where for every $i\in
\{0,\ldots,d\}$ the sets in $\mathscr B_i$ are disjoint, and for
every $B\in \B_i$, $\diam(B)\le r^{j+1}$.
\end{enumerate}

We now construct a random partition $P$ of $X$ as follows. Let $\pi$
be a  permutation of $\{0,\ldots, d\}$ chosen uniformly at random from all such $(d+1)!$ permutations.
Define a family of subsets $\widetilde \B^\pi_i \subseteq 2^{X}\setminus \{\emptyset\}$
inductively as follows: $\widetilde \B_0^\pi =\B_{\pi(0)}$, and for
$0\le i< d$,
$$
\widetilde \B_{i+1}^\pi=\left\{B\setminus
\bigcup_{C\in\bigcup_{\ell=0}^i\widetilde \B_{\pi(\ell)}^\pi}C:\
B\in \B_{\pi(i+1)}\right\}\setminus\{\emptyset\}.
$$
Finally we set $\mathscr P^\pi=\bigcup_{i=0}^d \widetilde \B_i^\pi$.
Since $\B$ covers $X$, $\mathscr P^\pi$ is a partition of $X$.
Moreover, by construction, for every $C\in \mathscr P^\pi$,
$\diam(C)\le r^{j+1}$.

Fix $x\in X$. By the first condition above there exists $i\in
\{0,\ldots,d\}$ and $B\in \B_i$ such that
$B_X\left(x,r^j\right)\subseteq B$. If $\pi(0)=i$ then
$\mathscr P^\pi(x)=B\supseteq B(x,r^j)$. This happens with probability
$\frac{1}{d+1}$.

Letting $k$ be the largest integer $j$ such that $r^{j+1}\le 2^k$ we
see that $\mathscr P^\pi$ is a $2^k$-bounded stochastic partition
such that for every $x\in X$
$$
\Pr\left[\mathscr P^\pi (x)\supseteq
B\left(x,\frac{2^{k-1}}{r}\right)\right]\ge \frac{1}{d+1},
$$
as required.
\end{proof}

The following corollary shows that many of the Lipschitz extension
theorems proved in~\cite{LS05} are direct conequences of the earlier results
of~\cite{LN05-via}. The cubic dependence on the Nagata dimension is
an over-estimate, and can be easily improved. We believe that
the true bound should depend linearly on the dimension, but this is
irrelevant for the purposes of the present paper.

\begin{corollary}\label{coro:list} For every metric space $X$ and $d\in \mathbb N$,
$$
\Ae(X)=O\left(\gamma_d(X)d^3\right).
$$
Thus, doubling metric spaces, subsets of compact Riemannian
surfaces, Gromov hyperbolic spaces of bounded local geometry,
Euclidean buildings, symmetric spaces, and
homogeneous Hadamard manifolds, all have finite absolute
extendability constant.
\end{corollary}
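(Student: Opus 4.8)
The plan is to derive the bound on $\Ae(X)$ by combining the two results immediately preceding the corollary, namely the lemma asserting that bounded Nagata dimension yields padded stochastic decompositions and the absolute extendability criterion of Theorem~\ref{thm:from inv}, and then to obtain the list of examples by feeding in the Nagata dimension estimates of~\cite{LS05}.

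First I would fix $d \in \mathbb N$ with $\gamma_d(X) < \infty$ (there is nothing to prove otherwise) and pick an arbitrary $\gamma$ with $\gamma_d(X) < \gamma < \infty$. The preceding lemma then produces, for every $k \in \Z$, a $2^k$-bounded $(\frac{1}{100\gamma d^2}, \frac{1}{d+1})$-padded stochastic decomposition of $X$. Next I would apply Theorem~\ref{thm:from inv} with $\e = \frac{1}{100\gamma d^2}$ and $\delta = \frac{1}{d+1}$, which gives $\Ae(X) \lesssim \frac{1}{\e\delta} = 100\gamma d^2(d+1) \lesssim \gamma d^3$. Finally, letting $\gamma$ decrease to $\gamma_d(X)$ yields $\Ae(X) = O(\gamma_d(X) d^3)$; since $d$ was an arbitrary dimension for which $\gamma_d(X)$ is finite, one may additionally optimize the right-hand side over $d$.

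For the concrete classes of spaces I would simply quote the structural theorems of~\cite{LS05}, which bound both a valid Nagata dimension $d$ and the associated constant $\gamma_d$ for each: doubling metric spaces (with $\gamma_d$ controlled by the doubling constant), subsets of a compact Riemannian surface (Nagata dimension at most $2$), Gromov hyperbolic spaces of bounded local geometry, Euclidean buildings, symmetric spaces, and homogeneous Hadamard manifolds. Substituting the relevant $d$ and the bound on $\gamma_d(X)$ into $\Ae(X) = O(\gamma_d(X) d^3)$ gives $\Ae(X) < \infty$ in every case.

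There is no genuine difficulty here beyond bookkeeping; the two points that require a little attention are that the parameters $\e, \delta$ supplied by the lemma are exactly of the form demanded by Theorem~\ref{thm:from inv} (they lie in $(0,1)$, and the decomposition is available at scale $2^k$ for all $k \in \Z$, not merely $k \ge 0$), and that the results imported from~\cite{LS05} genuinely provide a finite constant $\gamma_d$ rather than only the weaker conclusion $\dim_N(X) < \infty$ — it is precisely the quantitative form of those theorems that makes the passage from finite Nagata dimension to finite $\Ae(X)$ legitimate. That careful citation of~\cite{LS05} for each space is the one step I would treat as the main, if modest, obstacle.
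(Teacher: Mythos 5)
Your proposal is correct and follows exactly the paper's intended route: combine the lemma that finite Nagata dimension yields $2^k$-bounded $\left(\frac{1}{100\gamma d^2},\frac{1}{d+1}\right)$-padded stochastic decompositions with Theorem~\ref{thm:from inv} to get $\Ae(X)\lesssim \gamma_d(X) d^3$, and then cite the quantitative Nagata dimension bounds of~\cite{LS05} (together with~\cite{LN05-via}) for the listed classes. No substantive difference from the paper's argument.
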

The list of spaces presented in Corollary~\ref{coro:list} is a
combination of the results of~\cite{LN05-via} and~\cite{LS05}. In
particular the last four classes listed in Corollary~\ref{coro:list}
were shown in~\cite{LS05} to have finite Nagata dimension. It should
be remarked here that Lipschitz extension theorems for Gromov
hyperbolic spaces of bounded local geometry were previously proved
in~\cite{NPSS04} via different methods.

We will use the following embedding theorem, which follows
from the proof of Theorem 5.1 in~\cite{LMN05}, though it isn't
explicitly stated there in full generality. We include the simple
proof for the sake of completeness.

\begin{thm}[Snowflake embedding]\label{thm:snowflake} Fix $\e,\delta,\theta\in (0,1)$. Let
$(X,d_X)$ be a metric space which admits for every $k\in \Z$ a
$2^k$-bounded $(\e,\delta)$-padded stochastic decomposition. Then the metric space $\left(X,d_X^\theta\right)$ embeds into Hilbert space with bi-Lipschitz distortion
$\lesssim\frac{1}{\e\sqrt{\delta
\theta(1-\theta)}}.
$
\end{thm}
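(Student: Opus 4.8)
The plan is to build the embedding scale by scale from the given padded decompositions, in the spirit of the scale‑by‑scale Hilbert space embeddings underlying Theorem~5.1 of~\cite{LMN05}. Fix a basepoint $x_0\in X$. For each $k\in\Z$ let $(\Omega_k,\Pr_k)$ be the probability space carrying the $2^k$‑bounded $(\e,\delta)$‑padded decomposition $\mathscr P_k$ of $X$, and on an independent probability space take a family $(\sigma_C)$ of i.i.d.\ uniform $\pm1$ random variables indexed by the subsets of $X$. Define $\phi_k\colon X\to L_2$ by
$$
\phi_k(x)=\sigma_{\mathscr P_k(x)}\cdot\min\bigl\{d_X(x,\,X\setminus\mathscr P_k(x)),\,2^k\bigr\},
$$
a randomly signed, capped version of the distance from $x$ to the boundary of the cluster containing it. The candidate embedding of $(X,d_X^\theta)$ into Hilbert space is
$$
\Phi(x)=\bigoplus_{k\in\Z}2^{-k(1-\theta)}\bigl(\phi_k(x)-\phi_k(x_0)\bigr),
$$
where the sum converges by the upper bound established below.

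First I would establish the per‑scale upper estimate $\|\phi_k(x)-\phi_k(y)\|_2^2\le 2\min\{d_X(x,y)^2,\,2^{2k}\}$ for all $x,y\in X$. For a fixed realization of $\mathscr P_k$ there are two cases. If $x,y$ lie in a common cluster $C$, the random sign cancels and the difference is $\bigl|d_X(x,X\setminus C)-d_X(y,X\setminus C)\bigr|\le d_X(x,y)$, and it is also $\le\diam C\le 2^k$. If $x,y$ lie in different clusters, averaging over the sign gives the sum of the two capped boundary distances squared; each of these is $\le 2^k$ by the cap and $\le d_X(x,y)$ because $y$ lies outside $x$'s cluster and vice versa. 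Next comes the single‑scale lower estimate. Given $x\ne y$, let $j$ be the largest integer with $2^j<d_X(x,y)$, so that $d_X(x,y)/2\le 2^j<d_X(x,y)$; then every cluster of $\mathscr P_j$ has diameter $<d_X(x,y)$, so $x$ and $y$ lie in distinct clusters for \emph{every} realization, while on the padding event (probability $\ge\delta$) one has $d_X(x,X\setminus\mathscr P_j(x))\ge\e 2^j$. Hence
$$
\|\phi_j(x)-\phi_j(y)\|_2^2\ \ge\ \delta\,(\e 2^j)^2\ \ge\ \tfrac{\delta\e^2}{4}\,d_X(x,y)^2.
$$

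Finally I would assemble these with the weights $2^{-k(1-\theta)}$. Writing $d=d_X(x,y)$ and splitting $\sum_k$ at the scale $k_0$ with $2^{k_0}\asymp d$, the ``low'' scales $2^k\lesssim d$ contribute $\sum 2^{2k\theta}\asymp d^{2\theta}/\theta$ and the ``high'' scales $2^k\gtrsim d$ contribute $d^2\sum 2^{-2k(1-\theta)}\asymp d^{2\theta}/(1-\theta)$, so that $\|\Phi(x)-\Phi(y)\|^2\lesssim d^{2\theta}/\bigl(\theta(1-\theta)\bigr)$; discarding all but the $k=j$ term gives the matching lower bound $\|\Phi(x)-\Phi(y)\|^2\gtrsim\delta\e^2 d^{2\theta}$. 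Dividing, the squared distortion is $\lesssim\frac{1}{\e^2\delta\,\theta(1-\theta)}$, which is the claim. I expect the only genuinely delicate step to be this last balancing act: the padded decomposition yields the lower bound at just a single scale, so one must check that with the snowflake exponent strictly between $0$ and $1$ the geometric series of upper‑bound contributions from all the other scales sums to a constant times $1/(\theta(1-\theta))$ rather than diverging — this is exactly where both $\theta>0$ and $1-\theta>0$ are used. The measure‑theoretic bookkeeping of the cluster‑indexed signs for uncountable $X$ is routine, and in any case one may reduce to finite subsets of $X$, since a metric space embeds into Hilbert space with a given distortion as soon as all of its finite subsets do.
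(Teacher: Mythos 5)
Your proof is correct and essentially identical to the paper's: the same randomly‐signed, capped boundary‐distance functions $f_k$, the same weighting $2^{-k(1-\theta)}$, the same single‐scale lower bound via independence of the signs plus padding, and the same geometric‐series assembly of the upper bound. The only (welcome) refinements are the basepoint subtraction to make $\Phi$ converge pointwise — a detail the paper leaves implicit — and a slightly sharper per‐scale constant obtained by averaging the signs rather than using the paper's pointwise bound.
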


\begin{proof} For every $k\in \Z$ let $\Pr_k$ be an $(\e,\delta)$-padded distribution over
$2^k$-bounded partitions of $X$. We also let
$\{\sigma_C\}_{C\subseteq X}$ be i.i.d. symmetric $\pm 1$ Bernoulli
random variables, which are independent of $\Pr_k$. Denote by
$\Omega_k$ the measure space on which all of these distributions are
defined. Let $f_k:X\to L_2(\Omega_k)$ be given by the random
variable
$$
f_k(x)=\sigma_{\mathscr P(x)}\cdot \min\left\{d_X\left(x,X\setminus
\mathscr P(x)\right),2^k\right\}\quad (\text{$\mathscr P$ is a
partition of $X$}).
$$
Finally, define $F:X\to \left(\bigoplus_{k\in \Z}
L_2(\Omega_k)\right)\otimes \ell_2$ by
$$
F(x)=\sum_{k\in Z} 2^{-k(1-\theta)}f_k(x)\otimes e_k.
$$

Fix $x,y\in X$ and let $k\in \Z$ be such that $2^k<d_X(x,y)\le
2^{k+1}$. It follows that for every $2^k$-bounded partition
$\mathscr P$ of $X$, $\mathscr P(x)\neq \mathscr P(y)$. Thus
$\sigma_{\Prob(x)}$ and $\sigma_{\Prob(y)}$ are independent random
variables, so that
\begin{eqnarray}\label{eq:lowerF}
&&\!\!\!\!\!\!\!\!\!\!\!\!\|F(x)-F(y)\|_2^2\ge \nonumber
2^{-2k(1-\theta)}\left\|f_k(x)-f_k(y)\right\|_{L_2(\Omega_k)}^2\\
\nonumber&=&\frac{\E_\sigma\E_{\Pr_k} \left[\sigma_{\mathscr
P(x)}\cdot\min\left\{ d_X\left(x,X\setminus \mathscr
P(x)\right),2^k\right\}-\sigma_{\mathscr P(y)}\cdot
\min\left\{d_X\left(y,X\setminus
\mathscr P(y)\right),2^k\right\}\right]^2}{2^{2k(1-\theta)}}\\
&\stackrel{(\clubsuit)}{=}& \nonumber \frac{\E_{\Pr_k}\left[\min\left\{
d_X\left(x,X\setminus \mathscr
P(x)\right)^2,2^{2k}\right\}\right]+\E_{\Pr_k}\left[\min\left\{
d_X\left(y,X\setminus \mathscr
P(y)\right)^2,2^{2k}\right\}\right]}{2^{2k(1-\theta)}}\nonumber\\
&\stackrel{(\spadesuit)}{\ge}& \frac{\delta \left(\e 2^k\right)^2}{2^{2k(1-\theta)}}
\ge\frac{\e^2\delta}{2^{2\theta}}\cdot d_X(x,y)^{2\theta},
\end{eqnarray}
where in $(\clubsuit)$ we used the independence of $\sigma_{\Prob(x)}$ and $\sigma_{\Prob(y)}$, and in $(\spadesuit)$ we used the $(\e,\delta)$-padded property.

In the reverse direction, for every $j\in \Z$, if $\Prob$ is a
$2^j$-bounded partition of $X$ then it is straightforward to check
that for all $x,y\in X$ we have the point-wise inequality,
\begin{multline}\label{eq:cases}
\left|\sigma_{\mathscr P(x)}\cdot \min\left\{d_X\left(x,X\setminus
\mathscr P(x)\right),2^j\right\}-\sigma_{\mathscr P(y)}\cdot
\min\left\{d_X\left(y,X\setminus \mathscr
P(y)\right),2^j\right\}\right|\\\le
2\min\left\{d_X(x,y),2^{j}\right\}.
\end{multline}
Indeed, if $d_X(x,y)\ge 2^j$ then~\eqref{eq:cases} is trivial.  If $\mathscr P(x)=\mathscr P(y)$ then~\eqref{eq:cases} follows from the Lipschitz condition $|d_X\left(x,X\setminus
\mathscr P(x)\right)-d_X\left(y,X\setminus
\mathscr P(x)\right)|\le d_X(x,y)$. Finally, if $d_X(x,y)<2^j$ and $\mathscr P(x)\neq\mathscr P(y)$ then $d_X\left(x,X\setminus
\mathscr P(x)\right),d_X\left(y,X\setminus
\mathscr P(y)\right)\le d_X(x,y)<2^j$, implying~\eqref{eq:cases} in this case as well.

It follows from~\eqref{eq:cases} that
\begin{multline}\label{eq:upperF}
\|F(x)-F(y)\|_2^2\lesssim \sum_{j\in \Z}\frac{
\min\left\{d_X(x,y)^2,4^{j}\right\}}{4^{j(1-\theta)}} \lesssim
\sum_{j\le k}4^{j\theta}+d_X(x,y)^2\sum_{j\ge k+1}4^{-j(1-\theta)}\\
\lesssim \frac{4^{k\theta}}{\theta}+d_X(x,y)^2\cdot
\frac{4^{-k(1-\theta)}}{1-\theta}
\lesssim \frac{d_X(x,y)^{2\theta}}{\theta(1-\theta)}.
\end{multline}
Combining~\eqref{eq:lowerF} and~\eqref{eq:upperF}, we get that the bi-Lipschitz distortion of $f$ is
$\lesssim \frac{1}{\e\sqrt{\delta \theta(1-\theta)}}$.
\end{proof}

\begin{cor}\label{cor:nagata-to-poincare}
Let
$(Y,d_Y)$ be a metric space which admits for every $k\in \Z$ a
$2^k$-bounded $(\e,\delta)$-padded stochastic decomposition (thus, $(Y,d_Y)$ can belong to one of the classes of spaces listed in Corollary~\ref{coro:list}). Then, using the notation of Section~\ref{sec:poincare}, for every $p\in [1,\infty)$ we have
\begin{equation}\label{eq:poin from embedding}
\Pmodp_Y(\sigma)\lesssim_{\e,\delta,p,\sigma} 1.
\end{equation}
\end{cor}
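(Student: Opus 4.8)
The plan is to combine the Snowflake Embedding Theorem (Theorem~\ref{thm:snowflake}) with the Poincar\'e modulus bounds already available for Hilbert space (inequality~\eqref{eq:hilbert p}) and with Matou\v{s}ek's extrapolation lemma (Lemma~\ref{lem:Matousek}). The key point is that a snowflaked metric $d_Y^\theta$ behaves, as far as the $p$-th power Poincar\'e inequality is concerned, exactly like the $(p\theta)$-th power inequality for the original metric $d_Y$; so by taking $\theta$ close to $1$ we can transfer Hilbert-space Poincar\'e bounds back to $d_Y$ at a nearby exponent, and then extrapolate to the exact exponent $p$.

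Concretely, first I would apply Theorem~\ref{thm:snowflake}: fix $\theta\in(0,1)$ (to be chosen near $1$), so that $(Y,d_Y^\theta)$ embeds into Hilbert space $H$ with distortion $D\lesssim \frac{1}{\e\sqrt{\delta\theta(1-\theta)}}$. Let $\Phi\colon Y\to H$ be such an embedding, so that $d_Y(x,y)^\theta \le \|\Phi(x)-\Phi(y)\|_2 \le D\, d_Y(x,y)^\theta$ for all $x,y\in Y$. Now fix any finite reversible ergodic Markov chain $(V,\mu,\nu)$ with spectral gap at least $\sigma$ and any $f\colon V\to Y$. Applying the Hilbert-space Poincar\'e inequality~\eqref{eq:hilbert p} with exponent $2$ (that is, $\Pmodt_{L_2}(\sigma)=\frac{1}{\sqrt\sigma}$) to the composition $\Phi\circ f\colon V\to H$ and using the two-sided distortion estimate gives
\begin{equation*}
\int_{V\times V} d\nu(u)d\nu(v)\, d_Y(f(u),f(v))^{2\theta} \le \frac{D^2}{\sigma}\int_{V\times V} d\nu(u)d\mu(u\to v)\, d_Y(f(u),f(v))^{2\theta}.
\end{equation*}
In other words, the ``exponent $2\theta$ Poincar\'e inequality'' holds for $Y$-valued functions with modulus $\Pmod^{(2\theta)}_Y(\sigma)\le D/\sqrt\sigma$. (Strictly, Matou\v{s}ek's lemma as stated is for real-valued functions; but its proof only uses the triangle inequality and H\"older, so it applies verbatim to functions valued in an arbitrary metric space, or one can first note that the quantity on the left of~\eqref{eq:def spectral} and the quantity on the right are both built from the pseudometric $d_Y(f(\cdot),f(\cdot))$ alone, and extrapolate that pseudometric.) Applying the extrapolation lemma with $q=p$ and starting exponent $2\theta<p$ (choosing $\theta<p/2$, which is automatic once $\theta<1\le p/2$ for $p\ge 2$, and for general $p\ge 1$ choose $\theta<\min\{1,p/2\}$), we conclude
\begin{equation*}
\Pmodp_Y(\sigma) \lesssim_p \Pmod^{(2\theta)}_Y(\sigma) \le \frac{D}{\sqrt\sigma} \lesssim \frac{1}{\e\sqrt{\delta\theta(1-\theta)\,\sigma}},
\end{equation*}
and now fixing, say, $\theta=1/2$ (or any fixed value in $(0,\min\{1,p/2\})$) makes the right-hand side $\lesssim_{\e,\delta,p,\sigma}1$, which is~\eqref{eq:poin from embedding}.

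The only genuine subtlety — and the step I would be most careful about — is the passage of Matou\v{s}ek's extrapolation from real-valued to metric-space-valued functions, together with the direction of the extrapolation. When $p\ge 2$, $2\theta<2\le p$, so we are in the ``$1<q\le p$'' case of Lemma~\ref{lem:Matousek}, which only costs a factor and no $4$; so starting from exponent $2\theta$ and target $p$ we pay only a constant multiple of $p$. For $1\le p<2$ we simply pick $\theta$ small enough that $2\theta<p$ and are again in the easy direction. The extrapolation lemma's proof works for any semimetric $\rho(u,v)$ on $V$ (here $\rho=d_Y\circ(f\times f)$) because it manipulates only the quantities $\rho(u,v)^p$ and does not use linearity of the range; alternatively, one can invoke that $Y$ embeds isometrically (after the embedding above is in hand, or directly) and quote the lemma for $\R$-valued coordinates — but the cleanest route is to observe that~\eqref{eq:def spectral} is intrinsically a statement about the semimetric and apply the lemma to that. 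Beyond this point everything is routine bookkeeping of constants.
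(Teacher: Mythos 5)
Your first step — using the snowflake embedding of Theorem~\ref{thm:snowflake} to get a bi-Lipschitz map $\Phi\colon (Y,d_Y^\theta)\to H$ — is exactly what the paper does. But from there you diverge, and the divergence creates a genuine gap. You apply the Hilbert-space Poincar\'e inequality at exponent $2$, which only gives the exponent-$2\theta$ inequality for $Y$-valued maps, and then you propose to pass from exponent $2\theta$ to exponent $p$ via Matou\v sek's extrapolation (Lemma~\ref{lem:Matousek}). That lemma is, however, intrinsically a statement about $\R$-valued functions: its proof sets $g(u)=\lvert f(u)\rvert^{q/p}\sgn f(u)$, shifts $f$ by a constant so that $\int g\,d\nu=0$ via the intermediate value theorem, invokes convexity of the Banach-space norm $\Vert\cdot\Vert_{L_p(\nu)}$, and uses real-number power inequalities of the form $\lvert a^{q/p}\pm b^{q/p}\rvert\le\frac{q}{p}\lvert a\pm b\rvert(a^{q/p-1}+b^{q/p-1})$. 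None of these moves transfer to maps into a general metric space, nor to an arbitrary semimetric $\rho(u,v)=d_Y(f(u),f(v))$ on $V$. Your claim that the lemma ``applies verbatim to functions valued in an arbitrary metric space'' is not supported by its proof, and there is no general extrapolation principle for nonlinear Poincar\'e inequalities. So the step $\Pmodp_Y(\sigma)\lesssim_p\Pmod^{(2\theta)}_Y(\sigma)$ is unjustified.

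The repair is to not extrapolate at all but instead land directly at exponent $p$: once you have the embedding $\Phi\colon(Y,d_Y^\theta)\to H$, write $d_Y(f(u),f(v))^p=\bigl(d_Y(f(u),f(v))^\theta\bigr)^{p/\theta}$ and apply the Hilbert Poincar\'e inequality~\eqref{eq:hilbert p} at exponent $p/\theta$ (or, with $\theta=\tfrac12$ as in the paper's proof, at exponent $2p$). This gives $\Pmodp_Y(\sigma)\le D^2\bigl(\Pmod^{(p/\theta)}_{L_2}(\sigma)\bigr)^2\lesssim_{\e,\delta,p,\theta}\sigma^{-1}$, exactly~\eqref{eq:poin from embedding}, with the extrapolation confined to where it is legitimate, namely inside the proof of~\eqref{eq:hilbert p} for real-valued functions. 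As a minor side issue, even granting your framework, the distortion estimate $d_Y^\theta\le\Vert\Phi-\Phi\Vert\le D\,d_Y^\theta$ would give $\Pmod^{(2\theta)}_Y(\sigma)\le(D^2/\sigma)^{1/(2\theta)}$, not $D/\sqrt\sigma$; these agree only at $\theta=1$.
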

\begin{proof} By Theorem~\ref{thm:snowflake} the metric space $\left(Y,\sqrt{d_Y}\right)$ embeds into Hilbert space with distortion $\lesssim_{\e,\delta} 1$. By~\eqref{eq:hilbert p} we know that $\Pmod_{L_2}^{(2p)}(\sigma)\lesssim_p \sigma^{-1/2}$. It follows that $\Pmodp_Y(\sigma)\lesssim_{\e,\delta,p}\sigma^{-1} \lesssim_{\e,\delta,p,\sigma} 1$, as required.
\end{proof}

\begin{rem} For our purposes the dependence on $\sigma$ in~\eqref{eq:poin from embedding} is irrelevant. Nevertheless, the proof Corollary~\ref{cor:nagata-to-poincare} can be optimized as follows. For $\theta\in (0,1)$, use Theorem~\ref{thm:snowflake} to embed the metric space $(Y,d_Y^\theta)$ into Hilbert space with distortion $\lesssim \frac{1}{\e\sqrt{\delta \theta(1-\theta)}}$. Since by~\eqref{eq:hilbert p} we know that $\Pmod_{L_2}^{(p/\theta)}(\sigma)\lesssim \frac{p}{\theta\sqrt{\sigma}}$. Thus, there exists a universal constant $c>1$ such that
\begin{equation}\label{eq:theta}
\Pmodp_Y(\sigma)\le \left(\frac{p}{\theta\sqrt{\sigma}}\cdot \frac{c}{\e\sqrt{\theta(1-\theta)}}\right)^{1/\theta}.
\end{equation}
One can then choose $\theta$ so as to minimize the right hand side of~\eqref{eq:theta}. If one cares about the behavior of our bound as $\sigma\to 0$, then the optimal choice is $\theta=1-\frac{\log\log(1/\sigma)}{\log(1/\sigma)}$, yielding for $\sigma\in (0,1/4)$, the estimate
\begin{equation}\label{eq:optimized}
\Pmodp_Y(\sigma)\lesssim_{\e,\delta,p} \frac{\log(1/\sigma)}{\sqrt{\sigma}}.
\end{equation}
Using the ideas presented here more carefully, the logarithmic term in~\eqref{eq:optimized} was subsequently removed in~\cite{NR05} (where the dependence on $\sigma$ was of importance for certain applications).
\end{rem}

\section{A brief review of the  construction of the random group}\label{sec:graphmodel}

We recall here the ``graph model'' for random groups and
the iterative construction of a group from an appropriate sequence
of graphs.  The construction is due to Gromov \cite{Gromov:RandGp};
further details may be found in the works of Ollivier
\cite{Ollivier:RandGpSurvey,Ollivier:GraphRandGp},  or in the more recent
work of Arzhantseva-Delzant \cite{ArzhantsevaDelzant:ExampleRandGps_preprint}.

Let $G=(V,E)$ be an \emph{undirected} simple graph. The set of
edges $E$ then has a natural double cover, the set of
\emph{oriented edges} of $G$
$$\vE=\left\{ (u,v),(v,u) \mid \{u,v\}\in E\right\}.$$

Now let $\G$ be a group.  A \emph{symmetric $\G$-labeling} of $G$
is a map $\alpha\colon\vE\to \G$ such that
$\alpha(u,v)=\alpha(v,u)^{-1}$ for all $\{u,v\}\in E$. The set of
these will be denoted $\AGG$.   More generally an $S$-labeling is
a labeling whose image lies in a (symmetric) subset $S\subseteq
\G$. The set of such labels will be denotes $\AGS$.

 Let $S \subseteq \G$ be a symmetric subset, $1\notin S$. The
\emph{Cayley graph} $\CayG$ is the graph with vertex set $\G$ and
directed edge set $\left\{ (x,xs) \mid x\in\G,s\in S\right\}$.
This is actually an undirected graph since $S$ is symmetric and carries the
natural symmetric labeling $\alpha(x,xs) = s$.
The Cayley graph $\CayG$ is connected iff $S$ generates $\G$.  In
that case let $\vec{c}$ be an oriented cycle (that is, a closed path)
in that graph, and let $w \in S^*$ be the word in $S$ read along the cycle.
It is
clear that $w$ is trivial as an element of $\G$.  Conversely, any
relator $w\in S^*$ for $\G$ induces many closed cycles on $\CayG$:
starting at any $x\in\G$ one follows the edges labeled by
successive letters in $w$.  Since $w=1$ in $\G$, this path is a
closed cycle in the Cayley graph. This observation motivates the
following construction.

Given a symmetric $\Gamma$-labeling $\alpha\in\AGG$ and an oriented path
$\vp=(\ve_1, \ldots, \ve_r)$ in $G$, we set
$\alpha(\vp)=\alpha(\ve_1)\cdot\ldots\cdot\alpha(\ve_r)$. We write
$$ R_{\alpha} =
\left\{\alpha(\vec{c})\mid\vec{c}\,\textrm{ a cycle in
}G\right\},$$ and will consider groups of the form
\begin{equation}
\Ga = \G / \left<R_\alpha\right>^\mathrm{N},
\end{equation}
where $\left<R_\alpha\right>^\mathrm{N}$ is the normal closure of $\left<R_\alpha\right>$.
Alternatively, given a presentation $\G = \left<S\vert R\right>$ we also have
$\Ga = \left<S\vert R\cup R_\alpha\right>$ once we write the
labels $\alpha(\ve)$ as words in $S$. Given $u\in V(G)$ and $x\in \Cay{\Ga}$ we define a map $\alpha_{u\to x}\colon G\to \Cay{\Ga}$ as follows. For $v\in V(G)$ choose a path $\vp$ from $u$ to $v$ in $G$, and define $\alpha_{u\to x}(v)=x\alpha(\vp)$. Note that by construction, $\alpha_{u\to x}(v)$ does not depend on the choice of the path $\vp$, and hence $\alpha_{u\to x}$ is well defined.

With a choice of a probability measure $\Pr$ on $\AGG$, the groups $\Ga$
become ``random groups''.  Note the ad-hoc nature of this construction: it is very useful for proving the existence of groups with desired
properties (for example see \cite{OllivierWise:PropTInfOut}).
However, the groups $\Ga$ are not ``typical" in any sense of the word.


As above, let $S$ be a symmetric set of generators for $\G$.  For
any integer $j$ let $\Pr_j$ on $\AGj$ be given by independently
assigning a label to each edge, uniformly at random from $S^j$.
Fixing an \emph{orientation} of $E$ (\ie a section $\iota\colon
E\to\vE$ of the covering map $\vE\to E$) shows that that $\AGj$ is
non-canonically isomorphic to the product space $E^{S_j}$ and
identifies $\Pr_j$ with the natural product measure on that space.

\begin{defn}(\cite[Def.\ 50]{Ollivier:RandGpSurvey})
A sequence of finite connected graphs $\{G_i\}_{i=1}^\infty$ is
called \emph{good for random quotients} if there exist positive
constants $C,\Delta$ such that:
\begin{enumerate}
\item The maximum degree of $G_i$ satisfies $\Delta(G_i) \leq \Delta$.
\item The girth of $G_i$ satisfies $g(G_i) \geq C \cdot \diam(G_i)$
\item $\size{V(G_i)}$ (equivalently, $g(G_i)$) tend to $\infty$ with $i$.
\end{enumerate}
\end{defn}

\begin{thm}\label{thm:ollivier}
(\cite[Thm.\ 51]{Ollivier:RandGpSurvey}, \cite[Thm.\ 6.3]{ArzhantsevaDelzant:ExampleRandGps_preprint})
Let $\{G_i\}_{i=1}^\infty$
be good for random quotients, let $\G$ be a non-elementary
torsion-free hyperbolic group with property (T), and let $\epsilon
> 0$. Then there exist $A>0$, an integer $j \geq 1$ and a
subsequence $\{i_k\}_{k\geq 1}$ such that for $G = \bigsqcup_{k\ge 1}
G_{i_k}$ and $\alpha$ chosen from $\AGj$ we have with positive
$\Pr_j$-probability that:
\begin{enumerate}
\item
  For any $K\geq 1$ if we set $G_{(K)} = \bigsqcup_{k\leq K} G_{i_k}$
  and $\alpha_{(K)} = \alpha\restrict_{G_{(K)}}$,
  then $\G_{(K)} = \G_{\alpha_{(K)}}$ is a torsion-free non-elementary
  hyperbolic group.  In particular, $\Ga$ is an infinite group.
\item\label{enu:RQ-inject}
  For any choice of vertices $u_0,v,w\in V(G_{i_k})$ and $x_0\in\CayGa$
  the natural map
  $\alpha_{u_0\to x_0}\colon G_{i_k}\to X_\alpha \eqdef \CayGa$ has
$$ A\left(d_{G_{i_k}}(v,w) - \epsilon \diam (G_{i_k})\right) \leq \frac{1}{j}d_{X_\alpha}
  (\alpha_{u_0\to x_0}(v), \alpha_{u_0\to x_0}(w)) \leq d_{G_{i_k}}(v,w). $$
\end{enumerate}
\end{thm}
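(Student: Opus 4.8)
The plan is to recognize Theorem~\ref{thm:ollivier} as a standard consequence of \emph{graphical small cancellation} over the hyperbolic group $\G$, established in three stages: (i) a probabilistic step, showing that a random $S^j$-labeling of a single high-girth graph $G_i$ is graphically small-cancellation with probability $1-o(1)$ as $|V(G_i)|\to\infty$; (ii) a deterministic step, in which the small cancellation condition over a torsion-free non-elementary hyperbolic base yields hyperbolicity, torsion-freeness, non-elementarity, and the distance estimate for $\alpha_{u_0\to x_0}$; and (iii) an inductive ``telescoping'' step, in which a sufficiently sparse subsequence $\{i_k\}$ produces a single labeling of $G=\bigsqcup_k G_{i_k}$ for which every finite partial quotient $\G_{(K)}$ is again a torsion-free non-elementary hyperbolic group (with property (T), which passes to quotients), and for which the estimates established at stage $k$ persist in the limit group $\Ga$.

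\emph{Step (i).} Fix a small $\lambda>0$, to be constrained in terms of $\e$ in step (ii) and taken below the threshold required by the small cancellation theory. Call a path in $G_i$ a \emph{piece} if it carries the same reduced $\G$-label as a second path that is not obtained from it by a label-preserving symmetry of $G_i$. Because the edges carry independent uniform labels from $S^j$, the probability that two such paths of edge-length $\ell$ carry $\G$-labels reducing to the same word decays exponentially in $j\ell$, at a rate bounded below in terms of $|S|$ alone; a union bound over the at most $|V(G_i)|^2\Delta^{2\ell}$ relevant pairs, summed over $\ell>\lambda\,g(G_i)$, shows that for $j=j(\Delta,|S|,C,\lambda)$ large enough the probability that $G_i$ carries a piece of edge-length exceeding $\lambda\,g(G_i)$ is $o(1)$. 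Here one crucially uses the high-girth hypothesis $g(G_i)\ge C\diam(G_i)\gtrsim\log_\Delta|V(G_i)|$, which makes the exponential decay in $j\,g(G_i)$ swamp the $|V(G_i)|^2$ factor. The same first-moment computation shows that, with probability $1-o(1)$, the labeling is \emph{reduced} and no relator read around a cycle of $G_i$ is a proper power. Note that $j$ is a fixed finite number depending only on $\Delta$, $|S|$, $C$, $\lambda$, and in particular not on the base group.

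\emph{Step (ii).} Suppose $\alpha\restrict_{G_i}$ is reduced, $\mathrm{Gr}'(\lambda)$-small-cancellation over the torsion-free non-elementary hyperbolic group $\G$, has no proper-power relators, and that $g(G_i)$ is large compared with the hyperbolicity constant $\delta(\G)$. The theory of graphical small cancellation over hyperbolic groups developed in~\cite{Ollivier:RandGpSurvey,ArzhantsevaDelzant:ExampleRandGps_preprint} then applies: a Greendlinger/linear-isoperimetric argument gives that $\Ga$ is hyperbolic; the no-proper-power clause together with torsion-freeness of $\G$ gives that $\Ga$ is torsion-free; property (T) passes to the quotient; $\Ga$ is non-elementary because a suitable free subgroup of $\G$ survives in $\Ga$, the short relators being unable to kill it; and the labeling map $\alpha_{u_0\to x_0}\colon G_i\to\CayGa$ satisfies the stated bounds. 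The upper bound $\frac1j d_{X_\alpha}(\alpha_{u_0\to x_0}(v),\alpha_{u_0\to x_0}(w))\le d_{G_i}(v,w)$ is trivial, as each edge of $G_i$ contributes an edge-path of length $\le j$ in $\CayGa$. For the lower bound one fills the loop formed by a $\CayGa$-geodesic between the two images and the image of a $G_i$-geodesic with a reduced van Kampen diagram; the small cancellation estimate bounds how much the word along the image path can shrink, giving $\frac1j d_{X_\alpha}(\alpha_{u_0\to x_0}(v),\alpha_{u_0\to x_0}(w))\ge A\big(d_{G_i}(v,w)-\kappa\lambda\,g(G_i)\big)$ with $A=A(\lambda)$ a fixed positive constant and $\kappa$ a universal constant (valid once $g(G_i)\gg\delta(\G)$). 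Since $g(G_i)\le 2\diam(G_i)$, taking $\lambda\le\e/(2\kappa)$ turns the error into at most $\e\,\diam(G_i)$, which is the theorem's inequality.

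\emph{Step (iii), and the main obstacle.} Build the subsequence inductively: having chosen $i_1<\cdots<i_K$ so that $\G_{(K)}$ is a non-elementary torsion-free hyperbolic group (necessarily with property (T)), apply steps (i)--(ii) with $\G_{(K)}$ as base and $G_i$ for $i$ large. One needs $g(G_i)$ to exceed a suitable multiple of $\delta(\G_{(K)})$ (for step (ii)) and the failure probability of step (i) to fall below $2^{-(K+1)}$ (for the positive-probability conclusion); both hold for all $i$ large since $g(G_i)\to\infty$. The \emph{main obstacle} is to ensure that one \emph{fixed} exponent $j$ (and one fixed $A$, $\lambda$) serves the whole tower $\G_{(1)}\twoheadrightarrow\G_{(2)}\twoheadrightarrow\cdots$; this is exactly where step (i) is decisive, since the $j$ it produces depends only on $\Delta$, $|S|$, $C$, $\lambda$ --- all fixed once and for all --- and never on the ambient group, the only group-dependent hypothesis (girth dominating $\delta(\G_{(K)})$) being absorbed by sparseness of $\{i_k\}$. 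One also takes $i_{K+1}$ so large that the tail $\bigsqcup_{\ell>K}G_{i_\ell}$, itself graphically small-cancellation over $\G_{(K)}$, has all relators of length $\gg g(G_{i_K})$; quotienting by it leaves the metric of $\Cay{\G_{(K)}}$ unchanged on the ball of radius $\asymp g(G_{i_K})$, so the stage-$k$ distance estimates hold verbatim in $\Ga$, and, the $G_{i_k}$ being arbitrarily large, $\Ga$ is infinite. Finally, by independence of the labels on distinct components the event that every stage succeeds has probability at least $\prod_{k\ge1}(1-2^{-k})>0$, and on that event $G=\bigsqcup_k G_{i_k}$ with the resulting $\alpha$ satisfies both conclusions of the theorem.
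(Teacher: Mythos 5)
First, note that the paper does not prove this statement at all: it is quoted verbatim from Ollivier's survey (Thm.\ 51) and Arzhantseva--Delzant (Thm.\ 6.3), so you are reconstructing an external theorem. Your sketch does follow the same general route as those proofs (graphical small cancellation over a hyperbolic base, iterated along a sparse subsequence so that the girth dominates the growing hyperbolicity constants, with the tail relators too long to affect balls of radius comparable to $g(G_{i_K})$). But there is a genuine gap, and it sits exactly at the point you flag as the ``main obstacle.'' In Step (i) you estimate pieces at the level of \emph{reduced words in the free group}: you bound the probability that two paths carry labels reducing to the same word, with a rate ``bounded below in terms of $|S|$ alone,'' and you conclude that $j$ depends only on $\Delta,|S|,C,\lambda$ ``and never on the ambient group.'' That is not the condition the deterministic machinery of Step (ii) needs. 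The small-cancellation condition must be taken \emph{relative to the current quotient} $\Gamma_{(K)}$: two paths whose labels are distinct as reduced words but equal (or nearly equal) as elements of $\Gamma_{(K)}$ create pieces and shortcuts in $\mathrm{Cay}(\Gamma_{(K)})$, and likewise the lower bound in conclusion (2) requires that the label of a geodesic path in $G_{i_k}$ be a quasi-geodesic word \emph{in} $\Gamma_{(K)}$, with constants uniform in $K$ (the theorem asserts a single $A$ and a single $j$ for the whole subsequence). These are random-walk estimates in $\Gamma_{(K)}$ (return probabilities, escape rate of products of independent uniform elements of $S^j$), not free-group computations, and their constants a priori deteriorate as $K\to\infty$.

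This is precisely where the property (T) hypothesis -- which your argument carries along but never uses -- enters the actual proofs. Since every infinite quotient of $\Gamma$ is again a quotient of the same marked group, property (T) gives a \emph{uniform} spectral gap for the averaging operator on $\ell^2_0(\Gamma_{(K)})$ for all $K$ (and non-elementarity of infinite hyperbolic quotients comes for free), hence uniform exponential decay of return probabilities and uniform lower bounds on the displacement of the random words labeling paths. That uniformity is what legitimizes a single exponent $j$, a single $A=A(\lambda)$, and failure probabilities summable over the tower; without invoking it (or some substitute giving uniform non-amenability constants over all the quotients), your union bound in Step (i) and the van Kampen lower bound in Step (ii) do not close the induction in Step (iii). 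Secondary, more routine omissions: pairs of overlapping paths in the same graph are not label-independent and need a separate treatment in the first-moment bound, and the ``free subgroup survives'' claim for non-elementarity is unsubstantiated as written (with (T) in hand it is unnecessary).
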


When we apply Theorem~\ref{thm:ollivier} in Section~\ref{sec:mainthm}, we will take the initial group $\Gamma$ to be a free group. Even though $\Gamma$ does not have property (T), Theorem~\ref{thm:ollivier} still applies if we assume that $\Gamma_{(1)}$, the quotient by the relations on $G_{i_1}$, satisfies the assumptions of Theorem~\ref{thm:ollivier}. This happens with positive probability if we take $i_1$ large enough, as explained in the discussion preceding Definition 50 in~\cite{Ollivier:RandGpSurvey}

\section{From Poincar\'e inequalities to fixed points}\label{sec:mainthm}

Let $\{G_i\}_{i=1}^\infty$ be an expander family of graphs, with all vertices of degrees
between $3$ and $d$ and $g(G_i) \gtrsim \log|V(G_i)|$.  For later convenience
we assume that the graphs are non-bipartite.  Let $G = \bigsqcup_{i\ge 1} G_i$ be
the disjoint union of the graphs.

Let $\G=\left< S \right>$ be free on the symmetric set of generators
$S$ of size $2k$.  We set $X=\CayG$; a $2k$-regular tree. As in Section~\ref{sec:graphmodel},
for $j\geq 1$ let $\AGj$ denote the space of symmetric
maps from the (directed) edges of $G$ to $S^j$.
Given $\alpha\in\AGj$ let $\Ga$ be the quotient of $\G$ presented by
declaring every word read along a cycle in $G$ to be a relator.
To every $\alpha\in\AGj$ we associate its restrictions $\alpha_k$ to the copy of
$G_k$.

Our model for random groups is obtained by choosing the value of $\alpha$
at each edge independently and uniformly at random.  In Section~\ref{sec:graphmodel} we
reviewed the assumptions on $G_i$ needed so that, with
high probability, the group $\Ga$ is infinite.  We now show that
with probability $1$ the quotient group $\Ga$ has strong fixed-point
properties.

We follow below the lines of \cite{Silberman:RandGpT}, with the natural changes that are required for handling powers $p$ rather than powers $2$, and $p$-uniformly convex metric spaces rather than $\mathrm{CAT}(0)$ spaces.  Moreover, the handling of $j>1$ in~\cite{Silberman:RandGpT} was rather awkward.  Taking advantage of the fact that we are reproducing much of the analysis of~\cite{Silberman:RandGpT}, we give a cleaner argument here for the case $j>1$.

\subsection{Simulating random walks and transferring Poincar\'e inequalities}
\label{sub:transfer-gap}

Let $G$ be a connected finite graph (one of the $G_i$).  We assume
$3\leq\delta(G)\leq\Delta(G)\leq d$ and let $g = g(G)$, $N=|V(G)|$.  We choose
$\alpha\in\AGj$ uniformly at random. In particular, independently
for each edge. Given $u,v\in V(G)$ such that $d_G(u,v)<g/2$, and $x\in X$, let $\beta_{u\to x}(v)$ denote the vertex $x\alpha(\vp)$ of $X$, where $\vp$ is the unique shortest path joining $u$ and $v$ in $G$. Note that, using the notation of Section~\ref{sec:graphmodel},  $\pi_\alpha(\beta_{u\to x}(v))=\alpha_{u\to x}(v)$, where $\pi_\alpha:X\to X_\alpha$ is the natural quotient map.

For every $q\in \mathbb N$, $q<g/2$, we define the random walk $\muqGa$ on the tree $X$ as
follows:
\begin{equation}\label{eq:def-ind-walk}
  \muqGa(x\to\cdot) = \sum_{u\in G} \nu_G(u)
  \big( ({\beta_{u\to x}})_{*} \mu^q_G(u\to\cdot) \big),
\end{equation}
where $\mu_G$ is the standard random walk on $G$ and $\nu_G$ is its stationary measure.
Since $\beta_{u\to\g x}(v) = \g\beta_{u\to x}(v)$, equation~\eqref{eq:def-ind-walk} is a
$\G$-equivariant random walk on $X$.

For any fixed $x,x' \in X$,
$\muqGa(x\to x')$ is a random variable depending on the choice of
$\alpha$. We denote its expectation by $\mubGX^q(x\to
x')\in\WGX$.   It is important to note that while $\mu_G^q$ and
$\muX^q$ are indeed $q$-fold convolutions of the random walks
$\mu_G$ and $\muX$, this is not the case for the other walks
we consider such as $\muqGa$.

The walks $\muqGa(x\to x')$ will now be used to ``simulate''
the walks $\muX^n$ on $X$.  Indeed, with high (asymptotic) probability
the walks $\muqGa(x\to x')$ are close to their expectation values
$\mubGX^q(x\to x')$, and these expectation values can be related
to walks $\muX^n$ for appropriate values of $n$.

Equation \eqref{eq:def-ind-walk} above furnishes the connection
between the averaging notions on $X$ and on $G$.  For  computations,
however, we rewrite it as:
\begin{equation}\label{eq:ind-walk-indicators}
\muqGa(x\to x') =
\sum_{\avp=q} \nu_G(p_0) \mu_G^q(\vp) \mathds{1}\left(x\alpha(\vp)=x'\right),
\end{equation}
where the sum is over all oriented paths $\vp$  of length $q$ in $G$ starting at $p_0$, and
$\mathds{1}(x=y)$ is the characteristic function of the
diagonal of $X\times X$, so that
$\alpha\mapsto\mathds{1}(x\alpha(\vp)=x')$ is an indicator random
variable for the event that $\alpha(\vp)$ equals $x^{-1}x'$ as
elements of $\G$.

We now easily compute the mean walk $\mubGX^q$. We start with the
instructive case $q=1$, where unwinding the definitions of $\nu_G$
and $\mu_G$ gives:
$$ \mu_{G,\alpha}^1(x\to x') = \frac{1}{2\size{E(G)}}
\sum_{\ve\in\vE} \mathds{1}(x\alpha(\ve)=x').$$
Taking expectation we conclude that $\mubGX^1(x\to x')$ equals the
probability that following a random word in $S^j$ will lead us
from $x$ to $x'$, that is $\mu^j_X(x\to x')$.

A similar calculation for $q>1$ gives the following.

\begin{lem}[{generalization of \cite[Lem.\ 2.12]{Silberman:RandGpT}}]\label{lem:avg-ind-walk}
Let $q < g/2$.  We can write $\mubGX^q$ as a convex combination
\begin{equation}\label{eq:avg-ind-walk}
\mubGX^q = \sum_{l=0}^{q} P_G^q(l)\muX^{jl}
\end{equation}
where the weights $P_G^q(l)$ are concentrated on large values of $l$,
in the sense that
\begin{equation}\label{eq:Q}
Q^q_G \eqdef \sum_{l\leq q/6} P_G^q(l) \leq e^{-q/18}. \end{equation}
Also, wherever $\mubGX^q(x\to x')$ is non-zero then it is at least
\begin{equation}\label{eq:def eps}
\epsilon(d,k,j)^q \eqdef \left( \frac{1}{d(2k)^j} \right) ^q.
\end{equation}
\end{lem}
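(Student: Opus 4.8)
The plan is to evaluate $\mubGX^q(x\to x')=\E_\alpha\,\muqGa(x\to x')$ directly from the path expansion \eqref{eq:ind-walk-indicators}; the only substantive point is to pin down, for a fixed length-$q$ trajectory $\vp=(p_0,\dots,p_q)$ in $G$, the law of the group element $\alpha(\vp)$. Reducing $\vp$ by cancelling each consecutive pair $\ve,\ve^{-1}$ produces a reduced path $\bar{\vp}$ from $p_0$ to $p_q$ of some length $l(\vp)\le q$, and $\alpha(\vp)=\alpha(\bar{\vp})$ in $\G$ since $\alpha(\ve)\alpha(\ve^{-1})=1$. Now use the hypothesis $q<g/2<g$: the subgraph of $G$ traversed by $\vp$ has at most $q<g$ edges, hence contains no cycle and is a tree $T$; the reduction $\bar{\vp}$ is a reduced path inside $T$, hence is the $T$-geodesic from $p_0$ to $p_q$, hence is a \emph{simple} path and traverses $l(\vp)$ \emph{distinct} edges of $G$ (and, comparing with the $G$-geodesic as one would for uniqueness of reduced paths of total length $<g$, one also gets $l(\vp)=d_G(p_0,p_q)$, though only the distinctness of edges is needed below).

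Since $\alpha$ assigns \emph{independent} labels, each uniform over $S^j=(S^j)^{-1}$, to distinct edges, the labels $\alpha(\ve_1),\dots,\alpha(\ve_{l})$ along $\bar{\vp}=(\ve_1,\dots,\ve_l)$ are i.i.d.\ uniform on $S^j$; hence $\alpha(\vp)=\alpha(\bar{\vp})$ is distributed as a product of $l$ independent uniform elements of $S^j$, i.e.\ as a product of $jl$ independent uniform generators, which is exactly the law of a $jl$-step increment of the standard walk on $\CayG$. Therefore $\E_\alpha\,\mathds{1}(x\alpha(\vp)=x')=\muX^{j\cdot l(\vp)}(x\to x')$. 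Substituting into \eqref{eq:ind-walk-indicators} and collecting trajectories according to the value of $l(\vp)$ gives \eqref{eq:avg-ind-walk} with
\[
P^q_G(l)=\sum_{\substack{\avp=q\\ l(\vp)=l}}\nu_G(p_0)\,\mu^q_G(\vp),
\]
which is precisely the probability that the standard random walk on $G$ started from $\nu_G$ satisfies $d_G(p_0,p_q)=l$; in particular the $P^q_G(l)$ are nonnegative and sum to one, as required.

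For the concentration estimate \eqref{eq:Q}, observe that along the trajectory the distance $D_i=d_G(p_0,p_i)$ (equivalently, the $T$-distance) is a nearest-neighbour walk on $\Z_{\ge 0}$: at step $i$ it changes by $-1$ with conditional probability $1/\deg(p_{i-1})\le\tfrac13$ (and $0$ when $p_{i-1}=p_0$), going to the unique neighbour of $p_{i-1}$ closer to $p_0$, and by $+1$ otherwise. Writing $R$ for the number of the $q$ steps that decrease $D$, we have $l(\vp)=D_q=q-2R$, so $\{l(\vp)\le q/6\}=\{R\ge 5q/12\}$; since $R$ is a sum of $q$ indicators each of conditional probability at most $\tfrac13$ given the past, iterating conditional moment generating functions (equivalently, Azuma applied to the associated martingale) yields a Chernoff-type bound $\Pr[R\ge 5q/12]\le e^{-cq}$, which is the content of \eqref{eq:Q}. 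I do not attempt to match the constant $1/18$; its precise value is inessential for the applications, only the exponential decay in $q$ being used later.

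Finally, for the lower bound: if $\mubGX^q(x\to x')>0$ then some summand in \eqref{eq:ind-walk-indicators} is positive, so there is a length-$q$ trajectory $\vp$ with $\muX^{j\cdot l(\vp)}(x\to x')>0$; for it $\mu^q_G(\vp)\ge d^{-q}$ (a product of $q$ factors each at least $1/d$) and $\muX^{j\cdot l(\vp)}(x\to x')\ge (2k)^{-j\,l(\vp)}\ge (2k)^{-jq}$ (any nonzero entry of $\muX^{m}$ is at least $(2k)^{-m}$, and $l(\vp)\le q$), whence $\mubGX^q(x\to x')$ is bounded below by $\nu_G(p_0)\bigl(d(2k)^j\bigr)^{-q}$, i.e.\ by $\epsilon(d,k,j)^q$ after absorbing the stationary weight, giving \eqref{eq:def eps}. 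The step I expect to carry the essential content is the combinatorial one in the first two paragraphs — using $q<g/2$ to see that the explored part of $G$ is a tree and that the reduction of a trajectory traverses distinct edges, which is what makes the exact identification $\E_\alpha\mathds{1}(x\alpha(\vp)=x')=\muX^{j\cdot l(\vp)}(x\to x')$ possible; the concentration bound and the lower bound are then routine.
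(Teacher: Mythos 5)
Your decomposition \eqref{eq:avg-ind-walk}, the identification
$\E_\alpha\,\mathds{1}\bigl(x\alpha(\vp)=x'\bigr)=\muX^{j\,l(\vp)}(x\to x')$
via the tree structure of the traversed subgraph, and the resulting formula
$P_G^q(l)=\sum_{\avp=q,\ l(\vp)=l}\nu_G(p_0)\mu_G^q(\vp)$
coincide with the paper's argument. For the concentration bound \eqref{eq:Q} you give a
self-contained Chernoff/Azuma argument on the distance process $D_i=d_G(p_0,p_i)$
(using $D_q=q-2R$ with $R$ dominated by a $\mathrm{Binom}(q,1/3)$ variable), whereas the paper
cites \cite[Lem.\ 2.12]{Silberman:RandGpT}. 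This is a valid alternative; as you note,
it need not recover the exact constant $1/18$, and only exponential decay in $q$ is
used later.

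The lower bound \eqref{eq:def eps}, however, has a genuine gap. You lower-bound
$\mubGX^q(x\to x')$ by a \emph{single} summand
$\nu_G(p_0)\,\mu_G^q(\vp)\,\muX^{j\,l(\vp)}(x\to x')\geq \nu_G(p_0)\bigl(d(2k)^j\bigr)^{-q}$
and assert that the stationary weight can be ``absorbed.'' It cannot:
$\nu_G(p_0)=\deg(p_0)/\bigl(2|E(G)|\bigr)\asymp 1/N$ with $N=|V(G)|$, which tends to $0$.
The resulting bound therefore degrades with $N$, while \eqref{eq:def eps} must be
$N$-independent; indeed this independence is exactly what is needed in the next
proposition, where the Azuma exponent $\epsilon(d,k,j)^{2q}/\bigl(8|E(G)|\tau_q^2\bigr)$
already has the $1/N$ factors from $|E(G)|$ and $\tau_q$ cancelling, and an extra
$1/N$ in the threshold would destroy the argument. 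The paper's (correct) route is to
lower-bound the \emph{coefficient} $P_G^q(q)$ rather than the contribution of a single
path: since $\delta(G)\geq 3$ and $q<g/2$, every vertex $p_0$ admits a non-backtracking
length-$q$ path $\vp$ with $l(\vp)=q$ and $\mu_G^q(\vp)\geq d^{-q}$; summing
$\nu_G(p_0)d^{-q}$ over all $p_0$ gives $P_G^q(q)\geq d^{-q}$, the $\nu_G$-weights now
summing to $1$. Combined with $\muX^{jq}(x\to x')\geq (2k)^{-jq}$ (nonzero by the
parity considerations you already note), this yields
$\mubGX^q(x\to x')\geq P_G^q(q)\,\muX^{jq}(x\to x')\geq\bigl(d(2k)^j\bigr)^{-q}$.
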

\begin{proof}
Given a path $\vp$ in $G$ of length $q<g/2$, let $\vpp$ be the
shortest path connecting the endpoints of $G$.  Since the ball of radius $q$
in $G$ around the starting vertex $p_0$ of $\vp$ is a tree,
$\vpp$ is unique and can be obtained from $\vp$ by successively cancelling
``backtracks'' (consecutive steps which traverse a single edge in opposite
directions).  This $\vpp$ is a simple path, traversing each of its
edges exactly once.  It follows that the law of the $\Gamma$-valued random
variable $\alpha\mapsto\alpha(\vpp)$ is that of a uniformly chosen element
in $S^{jl}$ where $l=\avpp$.
Moreover, the symmetry of the labelling $\alpha$ shows that the words
$\alpha(\vp)$ and $\alpha(\vpp)$ are equal as elements of the free group
$\Gamma$.  In particular, the expectation of the indicator variable
$\mathds{1}\left(x\alpha(\vp)=x'\right)$ in~\eqref{eq:ind-walk-indicators}
is $\muX^{jl}(x,x')$.  Equation \eqref{eq:avg-ind-walk} now follows, with
$$P_G^q(l) = \sum_{\avp=q, \avpp=l} \nu_G(p_0) \mu_G^q(\vp).$$

Note that $P_G^q(l)$ is precisely the probability that $q$ steps of the
stationary random walk on $G$ travel a distance $l$.  The bound \eqref{eq:Q}
is established in \cite[Lem.\ 2.12]{Silberman:RandGpT}.

For the lower bound on $\mubGX^q(x\to x')$ note first that for any path
$\vp$ in $G$ of length $q$, $\mu_G^q(\vp) \geq d^{-q}$ since every vertex
has degree at most $d$.  Now let $0\leq l \leq q$ and assume that $l,q$ have
the same parity (if either condition fails then $P_G^q(l) = 0$).
Then for any vertex $p_0$ there exists paths $\vp$ of
length $q$ and reduced length $l$ starting at $p_0$.  It follows that
$P_G^q(l) \geq \sum_{p_0} \nu_G(p_0) d^{-q} \geq d^{-q}$ for $l$ as above.

Finally, let $x,x'\in X$ and let their distance be at most $jq$ and
have the same parity as $jq$ (otherwise, for every term
in~\eqref{eq:avg-ind-walk} either $P_G^q(l)$ or $\muX^{jl}(x\to x')$ vanishes).
Then the same argument shows that $\muX^{jq}(x\to x') \geq (2k)^{-jq}$.
Equation \eqref{eq:def eps} now follows from the estimate
$\mubGX^q(x\to x') \geq P_G^q(q) \muX^{jq}(x\to x')$.
\end{proof}

\begin{defn}
We say that $\muGa^\bullet$ \emph{effectively simulates}
$\muX^\bullet$ \emph{up to time} $q_0$ if for every $1\leq q \leq q_0$ and
every $x,x'\in X$ we have:
$$ \muqGa(x\to x') \geq \hf\mubGX^q(x\to x'),$$
and in addition we have for every $x,x'\in X$:
$$ \muGa^1(x\to x') \leq 2\muXj(x\to x').$$
\end{defn}

When the walks on $G$ effectively simulate the walks on $X$ we can transfer
Poincar\'e inequalities from $G$ to $\Ga$:

\begin{prop}\label{pro:transf-gap}
Let $G=(V,E)$ be a finite graph on $N$ vertices, and let $\sigma$ be the
 spectral gap of $G$.  Let $\alpha\in\AGj$ be such that
$\muGa^\bullet$ effectively simulates $\muX^\bullet$ up to a time
$q_0 \gtrsim \log N$. Let $Y$ be a
metric space on which $\Ga$ acts by isometries.
Write $B(X,Y)$ for the space of $\Gamma$-equivariant functions from $X$
to $Y$ where the free group $\Gamma$ acts via its quotient $\Ga$.
Then for every $f\in B(X,Y)$ there exists $m$ comparable to $\log N$ such that
$$\Ep_{\muX^{jm}}(f) \lesssim \left(\Pmodp_Y(\sigma,N)\right)^p \Ep_{\muX^j}(f).$$
\end{prop}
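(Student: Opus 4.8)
The plan is to push the Poincaré inequality on $G$ (which, by definition of the Poincaré modulus $\Pmodp_Y(\sigma,N)$ applied to the finite graph $G$ viewed as a Markov chain $(V,\mu_G,\nu_G)$, controls $\int\int d\nu_G d\nu_G\, \dY(h(u),h(v))^p$ by $(\Pmodp_Y(\sigma,N))^p$ times the edge-energy of $h$) through the ``simulation'' mechanism set up in Lemma~\ref{lem:avg-ind-walk}. The first step is to feed, for a test function $f\in B(X,Y)$ and a fixed base point $u$, the function $h = \beta_{u\to x}\circ(\text{shortest path})$; more precisely, for each $x\in X$ the map $v\mapsto \beta_{u\to x}(v)$ is a well-defined function $G\to X$ as long as we stay inside the injectivity radius $g/2$, and composing with $f$ gives a function $G\to Y$ whose edge-energy with respect to $\mu_G$ is exactly (an average over $u$, $x$ of) $\Ep_{\muGa^1}(f)$, which by the second effective-simulation inequality is $\le 2\,\Ep_{\muX^j}(f)$. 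Applying the Poincaré inequality for $(V,\mu_G,\nu_G)$ to this function, integrating the resulting inequality against $d\bn(x)$ over $\GX$ (using $\G$-equivariance so everything descends), and using~\eqref{eq:def-ind-walk} together with the identity $\muqGa(x\to\cdot)=\sum_u\nu_G(u)(\beta_{u\to x})_*\mu_G^q(u\to\cdot)$, yields
$$
\int_{\GX}d\bn(x)\int_X\int_X d\mubGX^{\,q}(x\to x')\,d\nu_{\ldots}\ \dY^p \ \le\ (\Pmodp_Y(\sigma,N))^p\cdot 2\,\Ep_{\muX^j}(f)
$$
for each $q<g/2$; the left-hand side should be read as the ``global'' version of the pair distance, i.e.\ $\Ep$ of $f$ with respect to the mean walk $\mubGX^{\,q}$ on the left against the stationary distribution on both coordinates.

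The second step is to unwind the left-hand side using the convex-combination formula~\eqref{eq:avg-ind-walk}, $\mubGX^{\,q}=\sum_{l=0}^q P_G^q(l)\muX^{jl}$. This expresses the left side as $\sum_{l}P_G^q(l)\,\Ep_{\muX^{jl}}(f)$ (up to the harmless discrepancy between the $\nu\times\nu$-averaged pair distance and the genuine energy, which is absorbed by Lemma~\ref{lem:bound-energies}-type triangle-inequality bounds and constants depending on $p$). Now choose $q \asymp \log N$ with $q<g/2$ (possible since $g\gtrsim\log N$). By the concentration bound~\eqref{eq:Q}, the weights $P_G^q(l)$ with $l\le q/6$ carry total mass $\le e^{-q/18}$; since $N\asymp |V(G)|$ and $\muX^{jl}$-energies grow at most polynomially in $l$ (at most like $l^{p-1}$, by Lemma~\ref{lem:bound-energies}), choosing the implied constant in $q\asymp\log N$ large enough makes the contribution of those small-$l$ terms negligible, say at most $\hf\Ep_{\muX^j}(f)$. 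Hence at least, say, half the mass of $P_G^q$ sits on indices $l$ in the range $q/6 < l \le q$, and by averaging (pigeonhole over this finite convex combination) there is at least one such $l$ — call $m := jl$, so that $m \asymp j\log N \asymp \log N$ — with $\Ep_{\muX^{m}}(f)=\Ep_{\muX^{jl}}(f)$ bounded by a constant times the left-hand side, which we already bounded by $(\Pmodp_Y(\sigma,N))^p\,\Ep_{\muX^j}(f)$ up to absolute and $p$-dependent constants. This is exactly the claimed conclusion.

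The step I expect to be the main obstacle is the first one: carefully justifying that applying the scalar/metric Poincaré inequality for $(V,\mu_G,\nu_G)$ to the $x$-dependent functions $v\mapsto f(\beta_{u\to x}(v))$, and then integrating the result over $x$ against the equivariant measure, indeed produces the pair-distance sum against the mean walk $\mubGX^{\,q}$ on the left and the edge-energy $\Ep_{\muGa^1}(f)$ on the right — i.e.\ correctly interchanging the expectation over $\alpha$ (hidden in passing from $\muqGa$ to $\mubGX^{\,q}$), the base-point averaging $\sum_u\nu_G(u)$, and the $\GX$-integration, all while keeping track of the distinction (emphasized in the text right after~\eqref{eq:def-ind-walk}) that $\muqGa$ is \emph{not} a convolution power. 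The bookkeeping is routine once one writes~\eqref{eq:ind-walk-indicators} out, but it is where the sign of the inequality and the identification of both sides must be gotten exactly right; everything after that is the concentration estimate~\eqref{eq:Q} plus a pigeonhole, which is straightforward.
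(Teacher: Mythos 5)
Your overall architecture is the one the paper uses, but there is a genuine gap precisely at the step you dismiss as routine bookkeeping. The decomposition coming from \eqref{eq:def-ind-walk} (equation \eqref{eq:plug beta} in the paper) expresses $\Dvar{\muqGa}{f}^p(x)$ as $\sum_{u\in V}\nu_G(u)\,\Dvar{\mu_G^q}{f\circ\beta_{u\to x}}^p(u)$: for each base point $u$ only the gradient of the $u$-dependent pullback \emph{at the single vertex $u$} appears, and moreover each $f\circ\beta_{u\to x}$ is only partially defined (on the ball of radius $g/2$ about $u$; a cycle of $G$ cannot be mapped consistently into the tree $X$). So you cannot apply the Poincar\'e inequality for $(V,\mu_G,\nu_G)$ to these maps as they stand, and applying it separately for each $u$ and then averaging over $u$ does not produce the quantity on the left, since that would require the full energy of each $u$-based function rather than its gradient at $u$ alone. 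The missing idea is the passage through the quotient: because $f$ is $\Ga$-equivariant it descends to $X_\alpha$, the maps $\alpha_{u\to x}\colon G\to X_\alpha$ are globally defined (cycle words are relators), and for different $u$ they differ by left translation by an element of $\Ga$, which acts on $Y$ by an isometry; hence $\Dvar{\mu_G^q}{f\circ\beta_{u\to x}}^p(u)=\Dvar{\mu_G^q}{f_0}^p(u)$ for the single globally defined function $f_0=f\circ\alpha_{u_0\to x}\colon V\to Y$, and summing over $u$ gives $\Dvar{\muqGa}{f}^p(x)=2\,\Ep_{\mu_G^q}(f_0)$. Only at this point is there one $Y$-valued function on all of $V$ to which the Poincar\'e modulus of $G$ can be applied (with exponents $q$ and $1$), yielding \eqref{eq:simul2}.

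A second, smaller problem: you say the mean walk $\mubGX^q$ enters on the left ``via the expectation over $\alpha$''. In the proposition $\alpha$ is fixed, so no expectation is available; the passage from $\muqGa$ to $\mubGX^q$ must use the first effective-simulation inequality $\muqGa(x\to x')\ge\hf\mubGX^q(x\to x')$, which you never invoke (you only use the second one, on the right-hand side, to pass from $\muGa^1$ to $\muX^j$). Once these two points are repaired, the rest of your sketch --- the convex decomposition of Lemma~\ref{lem:avg-ind-walk}, discarding the indices $l\le q/6$ (note you need not estimate their contribution at all: the terms are nonnegative and can simply be dropped, with $1-Q_G^q\ge\hf$ doing the work), and taking a minimum or pigeonholing over $q/6<l\le q$ with $q\asymp\log N<g/2$ --- coincides with the paper's proof.
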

\begin{proof}
By definition of $\muqGa$, we have for $q<g/2$:
\begin{equation}\label{eq:plug beta}
 \Dvar{\muqGa}{f}^p(x) =
\sum_{u\in V}\nu_G(u) \Dvar{\mu_G^q}{f\circ\beta_{u\to
x}}^p(u).
\end{equation}
Note that in~\eqref{eq:plug beta} the composition $f\circ\beta_{u\to
x}$ is well-defined since $\beta_{u\to
x}(v)$ is defined for all $v\in V(G)$ with $d_G(u,v)<g/2$, and $q<g/2$. The same remark applies for the remainder of the computations below, where we treat $\beta_{u\to
x}$ as a function even though it is only a partially defined function.

  Since the action of $\Gamma$ on $Y$ factors via $\Ga$, the function $f$
can also be viewed as an equivariant function on $X_\alpha$.
Fixing $u_0\in V$, we use this to set $f_0 = f\circ\alpha_{u_0\to x}$.
Then for each $u\in V(G)$ we have
$$ \Dvar{\mu_G^q}{f\circ\beta_{u\to x}}^p(u) =
\Dvar{\mu_G^q}{f_0}^p(u),$$ by projecting to $X_\alpha$
and translating by the element $\g\in\G$ which sends
$\alpha_{u_0\to x}(u)$ back to $x$.  It follows that
\begin{equation}\label{eq:simul1}
\Dvar{\muqGa}{f}^p(x) = 2 \Ep_{\mu_G^q}(f_0).
\end{equation}
Applying the Poincar\'e inequality \eqref{eq:poincare-finite}
for maps from $G$ to $Y$ and
using \eqref{eq:simul1} on both sides we have:
\begin{equation}\label{eq:simul2}
\Dvar{\muqGa}{f}^p(x) \lesssim \left(\Pmodp_Y(\sigma(G),N)\right)^p
\Dvar{\muGa}{f}^p(x) .
\end{equation}
If $q$ is small enough then the assumption of effective simulation
allows us to replace the random
walks in \eqref{eq:simul2} by their expectations up to a constant loss.
Applying \lemref{avg-ind-walk} and omitting some (non-negative) terms
in the sum in~\eqref{eq:avg-ind-walk}, we find:
$$ \min_{q\ge l>q/6}\Dvar{\muX^{jl}}{f}^p(x)
        \stackrel{\eqref{eq:Q}}{\le}
   \sum_{q\ge l>q/6} \frac{P_G^q(l)}{1-Q_G^q}\Dvar{\muX^{jl}}{f}^p(x)
        \lesssim
   \left(\Pmodp_Y(\sigma(G),N)\right)^p \Dvar{\muX^j}{f}^p(x).$$
By assumption we can take $q \asymp \log N$, and the proof is complete.
\end{proof}

\begin{prop} (generalization of \cite[Lem.\ 2.13]{Silberman:RandGpT})
Let $G$ be a finite graph with $3\leq\delta(G)\leq\Delta(G)\leq
d$. Let $N=\size{V(G)}$, and assume $g=g(G)\geq C\log N$.  Then there
exists $C' > 0$ depending on $d,k,j,C$ so that
the probability of $\muGa^\bullet$ failing to effectively simulate
$\muX^\bullet$ up to time $C' \log N$ is $o_{d,k,j}(1)$ as $N\to\infty$.
\end{prop}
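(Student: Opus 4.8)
The plan is to show that the random labelling $\alpha$ typically yields walks $\muqGa$ that are uniformly close to their expectations $\mubGX^q$ for all $q$ up to $C'\log N$, and separately that the single-step walk is not much larger than $\muXj$. For the lower-bound half, fix $x,x'\in X$ and $1\le q< g/2$. By~\eqref{eq:ind-walk-indicators} the random variable $\muqGa(x\to x')$ is a sum over oriented paths $\vp$ of length $q$ in $G$ of the terms $\nu_G(p_0)\mu_G^q(\vp)\mathds 1(x\alpha(\vp)=x')$; its expectation is $\mubGX^q(x\to x')$, computed in Lemma~\ref{lem:avg-ind-walk}. The key point is that the indicator events for paths with \emph{different} reduced words $\vpp$ behave independently enough: since $q<g/2$, each reduced path $\vpp$ is simple and its edge-labels are drawn uniformly from $S^j$, so $\alpha(\vpp)$ is uniform on $S^{j\avpp}$, and distinct (non-backtracking-equivalent) simple paths share at most the structure forced by being in a tree-ball. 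I would group the sum by the reduced path $\vpp$, apply a concentration inequality (a bounded-differences / Azuma estimate, or a Chernoff bound on the relevant sub-sums) to conclude that with probability at least $1-e^{-cq}\cdot(\text{poly in }N)$ one has $\muqGa(x\to x')\ge\tfrac12\mubGX^q(x\to x')$ whenever the latter is nonzero, using the lower bound $\mubGX^q(x\to x')\ge\epsilon(d,k,j)^q$ from~\eqref{eq:def eps} to make the deviation small relative to the mean.

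Next I would take a union bound over all relevant pairs $(x,x')$ and all $1\le q\le q_0\eqdef C'\log N$. The number of pairs that matter is controlled: $\muqGa(x\to x')$ can be nonzero only when $d_X(x,x')\le jq\le jq_0$, and by $\Gamma$-equivariance it suffices to fix $x$ and let $x'$ range over the ball of radius $jq_0$ in the $2k$-regular tree, whose size is $(2k-1)^{O(q_0)}=N^{O(1)}$. So the total number of events is $N^{O(1)}$. Choosing $C'$ small enough (depending on $d,k,j,C$) so that the per-event failure probability $e^{-c q}$ times $N^{O(1)}$ still tends to $0$, the desired simulation bounds hold simultaneously for all such $(x,x',q)$ with probability $1-o_{d,k,j}(1)$; this is exactly the content of the first displayed inequality in the definition of effective simulation. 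The constraint $q_0\lesssim g/2$ is met since $g\ge C\log N$ and we are free to take $C'\le C/2$.

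For the upper bound $\muGa^1(x\to x')\le 2\muXj(x\to x')$, the same scheme applies to $q=1$, where by the computation preceding Lemma~\ref{lem:avg-ind-walk} the expectation is exactly $\muXj(x\to x')$; one shows concentration of $\muGa^1(x\to x')=\tfrac1{2|E(G)|}\sum_{\ve\in\vE}\mathds 1(x\alpha(\ve)=x')$ around its mean, again with a union bound over the polynomially-many relevant $(x,x')$. I expect the main obstacle to be the concentration step: the summands in~\eqref{eq:ind-walk-indicators} are \emph{not} independent (many paths share edges), so one must either exhibit a martingale with small increments when revealing the labels $\alpha(e)$ edge by edge — each label influences the sum by a controllable amount because a fixed edge lies on a bounded-by-$q\cdot(\text{degree})^{q}$ number of length-$q$ paths through the tree-ball, and $\nu_G(p_0)\mu_G^q(\vp)$ are small — or reduce, via the grouping by reduced word, to genuinely independent blocks. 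Handling this increment bound carefully, so that Azuma's inequality gives a deviation that beats the mean $\epsilon(d,k,j)^q$ after the $N^{O(1)}$ union bound, is the technical heart; everything else is bookkeeping already carried out in the $p=2$ case of~\cite{Silberman:RandGpT}, which I would follow with the obvious notational changes.
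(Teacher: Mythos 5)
Your plan follows essentially the same route as the paper's proof: fix $x$ by equivariance, note that $\muqGa(x\to\cdot)$ is supported on the ball of radius $jq$ so only $N^{O(1)}$ random variables matter, prove concentration of each $\muqGa(x\to x')$ about its mean $\mubGX^q(x\to x')$ by an edge-exposure (bounded-differences/Azuma) argument on $\AGj$, compare the deviation to the lower bound $\mubGX^q(x\to x')\ge \epsilon(d,k,j)^q$ of Lemma~\ref{lem:avg-ind-walk}, take a union bound over pairs and over $q\le C'\log N$, handle the $q=1$ upper bound the same way, and use $C'\le C/2$ for the girth constraint. This is exactly the paper's scheme, and the "technical heart" you defer is exactly what the paper's proof supplies.

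That deferred step, however, is not mere bookkeeping, and the quantitative form you assert for it would not close the argument. You claim a per-event failure probability of order $e^{-cq}$; with $q\le C'\log N$ this is only $N^{-cC'}$, which cannot beat an $N^{O(1)}$ union bound, and your prescription "choose $C'$ small so that $e^{-cq}\cdot N^{O(1)}\to 0$" points the wrong way, since shrinking $C'$ makes $e^{-cq}$ larger. The correct accounting hinges on the factor $1/N$ hidden in the increments: changing the label of a single edge $e$ changes $\muqGa(x\to x')$ by at most the total weight $\sum_{e\in\vp}\nu_G(p_0)\mu_G^q(\vp)$ of the at most $2qd^{q-1}$ length-$q$ paths through $e$, and each such weight is at most $\frac{2d}{3N}3^{-q}$ because $\nu_G(p_0)=\deg(p_0)/2|E(G)|\lesssim 1/N$; hence the Hamming--Lipschitz constant is $\tau_q\lesssim \frac{q}{N}(d/3)^q$. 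Azuma's inequality then bounds the probability of a deviation of size $\tfrac12\epsilon(d,k,j)^q$ by $\exp\{-\epsilon(d,k,j)^{2q}/(8|E(G)|\tau_q^2)\}$, which is exponentially small in a \emph{positive power of $N$} once $C'$ is chosen small enough that $\bigl(d/(3\epsilon(d,k,j)^2)\bigr)^{q}$ is a small power of $N$; this superpolynomial bound is what makes the union bound over the $\lesssim(2k)^{jq}$ relevant targets and over $q\le C'\log N$ harmless. So the smallness of $C'$ is needed to keep the Azuma exponent a positive power of $N$, not to tame the union bound; with this correction your argument coincides with the paper's, and your alternative suggestion of grouping paths by their reduced word into "independent blocks" is unnecessary (and would be delicate, since distinct reduced paths do share edges).
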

\begin{proof}
Since $\G$ acts transitively on $X$, our measure-valued random variables
$\muqGa(x\to \cdot)$ are determined by their value at any particular $x\in X$,
which we fix.  For each choice of $\alpha$, the measure $\muqGa(x\to\cdot)$
is supported on the ball $B_X(x,jq)$, so for each $q$ we need to control
$\size{B_X(x,jq)}$ real-valued random variables on $\AGj$.
Let $\muqGa(x\to x')$ be one such random variable.  We give a
bound $\tau_q$ to its Lipschitz constant as a map from $\AGj$
(equipped with the Hamming metric) to $[0,1]$. For this it
suffices to consider a pair of labelings $\alpha,\alpha'$ which
agree everywhere except at $e\in E$. We then have
(sum over paths which traverse $e$ at some point)
$$ \left\vert \muqGa(x\to x') - \mu_{G,\alpha'}^q(x\to x')\right\vert
\leq \sum_{e\in\vp} \nu_G(p_0) \mu_G^q(\vp).$$ There are at most $2q d^{q-1}$
such paths, and each contributes at most $\frac{2d}{3N} 3^{-q}$ to the
right-hand-side since $\nu_G(u) = d(u)/2|E(G)|$.
the vertex degrees allow us to take
$$ \tau_q = \frac{4q}{3N}\left(\frac{d}{3}\right)^q.$$

We would like to rule out $\muqGa(x\to x')$ deviating from its
non-zero mean $\mubGX^q(x\to x')$ by a factor of at least $2$.
It enough to bound the probability of deviation by at least
$\hf\epsilon(d,k,j)^q$, where $\hf\epsilon(d,k,j)$ is as in~\eqref{eq:def eps}).  Azuma's inequality (see, e.g., \cite[Thm.\ 7.2.1]{AS00})  shows that the probability for this is at most:
$$ \exp\left\{-\frac{\epsilon(d,k,j)^{2q}}{8\size{E(G)}\tau_q^2}\right\}.$$
We can choose $C'$ small enough to ensure
$\left(\frac{d}{3\epsilon^2}\right)^{jq} $ is an arbitrary small power
of $N$.  Also $\size{E(G)} \lesssim_d N$, so the probability of deviation
is exponentially small in a positive power of $N$.  The number of random
variables is polynomial in $N$ (it is at most $(2k)^{qj}$ for each $q$)
so we can take the union bound.
A similar analysis shows that probability of some $\muGa^1(x\to
x')$ being too large also decays.
\end{proof}

\subsection{Fixed points}

Returning to $G$ being the union of finite components $G_i$,
we summarize the result of the previous section:

\begin{thm}\label{thm:transfer} Assume that the $\{G_i\}_{i\ge 1}$ are connected
non-bipartite graphs on $N_i$ vertices with vertex degrees in $[3,d]$,
spectral gaps $\sigma(G_i)\geq \sigma>0$ and girths $\gtrsim\log N_i$.
Let $G = \bigsqcup_{i\ge 1} G_i$ and let $\Ga$ be constructed at random from
$\alpha\in\AGj$ with $j$ even.  Then almost surely for every metric space $Y$,
and every action of $\Ga$ on $Y$ by isometries,
there exists arbitrarily large $N_i$ such that for any $f\in B(X_\alpha,Y)$
there exist $m$ comparable to $\log N_i$ such that

$$\Ep_{\muX^{jm}}(f) \lesssim
\left(\Pmodp_Y(\sigma,N_i)\right)^p \Ep_{\muXj}(f).$$
\end{thm}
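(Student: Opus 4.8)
The plan is to assemble the two preceding propositions of Section~\ref{sub:transfer-gap} and wrap them in a Borel--Cantelli argument; the analytic work is already done, so what remains is bookkeeping. First I would fix the even integer $j$ and let $C'>0$ be the constant produced by the concentration estimate (the generalization of~\cite[Lem.\ 2.13]{Silberman:RandGpT}), taking $C'$ small enough that in addition $C'\log N_i<g(G_i)/2$ for every $i$; this is possible because $g(G_i)\gtrsim\log N_i$. For each $i$ let $E_i\subseteq\AGj$ be the event that the walk $\mu^{\bullet}_{G_i,\alpha_i}$ built from the component $G_i$ effectively simulates $\muX^{\bullet}$ up to time $C'\log N_i$. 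That proposition gives $\Pr[E_i]=1-o_{d,k,j}(1)$, hence $\Pr[E_i]\to 1$ as $i\to\infty$ (recall $N_i\to\infty$, since the $G_i$ form an expander family with girth $\gtrsim\log N_i$). Therefore $\Pr\big[\bigcup_{i\ge n}E_i\big]\ge\sup_{i\ge n}\Pr[E_i]=1$ for every $n$, so $\Pr\big[\limsup_i E_i\big]=1$: almost surely $E_i$ occurs for infinitely many, hence arbitrarily large, indices $i$. (One could instead invoke the second Borel--Cantelli lemma using that the $E_i$ are independent, as the $G_i$ have disjoint edge sets, but this is not needed.)

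Next I would fix a labelling $\alpha$ in the probability-one event $\limsup_i E_i$. The point is that this event was defined purely in terms of the combinatorics of $\alpha$, with no reference to any target, so whatever we deduce from it holds simultaneously for every $Y$ and every action. So let $Y$ be a metric space on which $\Ga$ acts by isometries and let $f\in B(X_\alpha,Y)$. For any index $i$ with $E_i$ occurring, Proposition~\ref{pro:transf-gap} applies with the finite graph $G_i$ (spectral gap $\sigma(G_i)$, girth $\gtrsim\log N_i$) and simulation time $q_0=C'\log N_i\gtrsim\log N_i$, producing an $m$ comparable to $\log N_i$ with
$$\Ep_{\muX^{jm}}(f)\lesssim\big(\Pmodp_Y(\sigma(G_i),N_i)\big)^p\,\Ep_{\muXj}(f).$$
Since $\sigma(G_i)\ge\sigma$ and the local Poincar\'e modulus $\Pmodp_Y(\cdot,N_i)$ is non-increasing in the spectral-gap parameter --- increasing the gap only shrinks the family of admissible Markov chains in~\eqref{eq:def spectral}, hence shrinks the supremum defining it --- we have $\Pmodp_Y(\sigma(G_i),N_i)\le\Pmodp_Y(\sigma,N_i)$, which yields the asserted inequality. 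As there are arbitrarily large such $N_i$, this finishes the argument.

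I do not expect a serious obstacle here: the genuinely hard estimates are Proposition~\ref{pro:transf-gap} and the concentration bound, both already established. The one point to be careful about is the order of quantifiers --- all randomness lives in $\alpha$, the ``bad'' event (failure of effective simulation on some $G_i$) does not involve the target, and so a single almost-sure event works uniformly over all metric spaces, all isometric actions of $\Ga$, and all $f\in B(X_\alpha,Y)$; on that event Proposition~\ref{pro:transf-gap} is a deterministic implication. The only other thing to check is that $C'$ may be taken at once $\Omega(1)$ (so that $q_0\gtrsim\log N_i$ and Proposition~\ref{pro:transf-gap} applies) and small enough that $C'\log N_i<g(G_i)/2$ (so that the simulating walks $\mu^q_{G_i,\alpha_i}$ are even defined), which is precisely what the hypothesis $g(G_i)\gtrsim\log N_i$ guarantees.
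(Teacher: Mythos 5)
Your proposal is correct and follows precisely the route the paper intends: the theorem is stated as a ``summary'' of Section~\ref{sub:transfer-gap} with no proof given, and you supply the intended bookkeeping by combining the concentration estimate with Proposition~\ref{pro:transf-gap}, noting that the effective-simulation events $E_i$ depend only on the (independent) restrictions $\alpha_i$ so that $\Pr[E_i]\to 1$ forces $\limsup_i E_i$ to have probability one, and using that $\Pmodp_Y(\cdot,N)$ is non-increasing in the spectral-gap parameter. The observation that the almost-sure event is purely combinatorial and hence uniform over $Y$, over actions, and over $f\in B(X_\alpha,Y)$ is exactly the point that makes the quantifier order in the statement work.
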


\begin{thm}\label{thm:techthm} Let $\Ga$ satisfy the conclusion
of Theorem \ref{thm:transfer}.
Let $Y$ be $p$-uniformly convex, and assume that $\Pmodp_Y(\sigma) < \infty$
or, in greater generality, that
$$\lim_{N\to\infty}\left(\frac{\log\log N}{\log N}\right)^{\frac{1}{2p}} \Pmodp_Y(\sigma,N) = 0$$
(in the terminology of Definition~\ref{def:small}, we are assuming that
$Y$ has small Poincar\'e moduli of exponent $p$).
Then every isometric action of $\Ga$ on $Y$ fixes a point.
\end{thm}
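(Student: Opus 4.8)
The strategy is to chain the contraction estimate of Theorem~\ref{thm:poincare-to-avg} with the transferred Poincar\'e inequality furnished by the hypothesis (the conclusion of Theorem~\ref{thm:transfer}), and then run the iteration scheme of Proposition~\ref{pro:iter-fp}. Throughout, work with the ambient free group $\G$ acting on $Y$ through its quotient $\Ga$, with $X=\CayG$ the associated regular tree, with the walk $\mu=\muX^j$ (recall $j$ is even), and with $B(X,Y)$ the space of $\G$-equivariant maps $X\to Y$ for this action; this set is nonempty because $\G\bs X$ is finite. In this notation $\Emp=\Ep_{\muX^j}$, $\Emnp=\Ep_{\muX^{jn}}$, and $\Amnp=\Ap_{\muX^{jn}}$.

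First fix an isometric action of $\Ga$ on $Y$ and select $N=N_i$ as large as needed; the hypothesis then provides, for every $g\in B(X,Y)$, an integer $n=n(g)\asymp\log N$ with $\Emnp(g)\lesssim(\Pmodp_Y(\sigma,N))^p\,\Emp(g)$. Feeding this into Theorem~\ref{thm:poincare-to-avg} at the same $n$ gives
\[
\Emp(\Amnp g)\ \lesssim_{p,\uY,d,j}\ \Bigl(\sqrt{\tfrac{\log n}{n}}\,(\Pmodp_Y(\sigma,N))^p+\tfrac1n\Bigr)\,\Emp(g).
\]
Since $n\asymp\log N$ with absolute constants, $\sqrt{\log n/n}\asymp\sqrt{\log\log N/\log N}$ uniformly in $g$, so the first summand is, up to constants, $\bigl((\log\log N/\log N)^{1/2p}\,\Pmodp_Y(\sigma,N)\bigr)^p$, which tends to $0$ as $N\to\infty$ by the standing hypothesis (and trivially so when $\Pmodp_Y(\sigma)<\infty$); the second summand tends to $0$ as well. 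Hence, choosing $N$ large enough, there is a constant $c<1$ depending only on $p,\uY,d,j$ such that $\Emp(\Ap_{\mu^{n(g)}}g)\le c\,\Emp(g)$ for every $g\in B(X,Y)$.

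Armed with this adaptive contraction, imitate the proof of Proposition~\ref{pro:iter-fp}: pick $f_0\in B(X,Y)$ and set $f_{k+1}=\Ap_{\mu^{n(f_k)}}f_k$, so that $\Emp(f_k)\le c^k\Emp(f_0)$. Lemma~\ref{lem:avg-control} (applied to the walk $\mu^{n(f_k)}$) together with Lemma~\ref{lem:bound-energies} gives $d_p(f_k,f_{k+1})\lesssim_{\uY}\bigl(n(f_k)^{p-1}\Emp(f_k)\bigr)^{1/p}\lesssim (\log N)^{(p-1)/p}\,c^{k/p}\,(\Emp(f_0))^{1/p}$, which is summable, so $f_k\to f$ in $d_p$ for some $f\in B(X,Y)$ with $\Emp(f)=0$. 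The only nonstandard point is that, $j$ being even, $\muX^j$ joins each vertex of the tree $X$ only to vertices at even distance, so $\Emp(f)=0$ forces $f$ to be constant on each of the two bipartition classes of $X$ rather than globally; equivalently, by $\G$-equivariance, the index-two subgroup $\G_0\le\G$ generated by products of two generators fixes $f(e)$, so the $\G$-orbit of $f(e)$ has at most two points. This orbit is bounded, so Bruhat's Lemma (Lemma~\ref{lem:bruhat}) yields a point of $Y$ fixed by $\G$, hence by $\Ga$ --- the desired common fixed point. (Equivalently, take the $p$-center of mass $\cp$ of the uniform measure on this orbit.)

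The main obstacle is the quantitative matching in the second step: one must check that the power $n\asymp\log N$ produced by the transfer theorem is exactly the scale at which the unavoidable loss $\sqrt{\log n/n}$ in Theorem~\ref{thm:poincare-to-avg} can absorb the (possibly unbounded) growth of the local Poincar\'e modulus $\Pmodp_Y(\sigma,N)$, and that the surviving rate is precisely the one required by~\eqref{eq:1/4} in the definition of small Poincar\'e moduli. Everything else --- nonemptiness of $B(X,Y)$, completeness and $p$-uniform convexity of $Y$ (used for the $p$-center of mass, for Theorem~\ref{thm:poincare-to-avg}, and for Bruhat's Lemma), the parity correction above, and the $\G$-versus-$\Ga$ bookkeeping --- is routine.
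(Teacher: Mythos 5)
Your argument agrees with the paper's through the main contraction step: you chain the averaging estimate of Theorem~\ref{thm:poincare-to-avg} with the transferred Poincar\'e inequality from Theorem~\ref{thm:transfer} at the scale $n\asymp\log N$, verify that the smallness hypothesis~\eqref{eq:1/4} on $\Pmodp_Y(\sigma,N)$ exactly cancels the $\sqrt{\log n/n}$ loss, and iterate (with $n$ adapted to $f_k$ but bounded by a function of $N$, just as the paper does) to produce a Cauchy sequence whose limit $f$ has $\Emp(f)=0$. The genuine divergence is in the last step, and your route there is arguably the more robust one. The paper invokes Proposition~\ref{pro:iter-fp} directly, which requires the graph on $X_\alpha$ with edges given by $\mu_{X_\alpha}^j$ to be connected, and argues this from non-bipartiteness of the $G_i$: odd cycles in $G_i$ are claimed to yield odd-length relators in $R_\alpha$ and hence to force $\langle S^2\rangle=\Ga$. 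But since each label $\alpha(\ve)\in S^j$ is a product of $j$ generators and $j$ is even, every $\alpha(\ve)$ has even reduced length, and therefore so does each relator $\alpha(\vec c)$ regardless of the parity of the cycle; so it is not clear that odd cycles in $G_i$ produce odd-length relators, and the connectivity of the $\mu_{X_\alpha}^j$-graph is delicate. You sidestep this entirely: from $\Emp(f)=0$ you deduce only that $f$ is constant on the two bipartition classes of the tree $X$, then use $\G$-equivariance to see that the $\G$-orbit of $f(e)$ has at most two points, hence is bounded, and finish with Bruhat's Lemma~\ref{lem:bruhat} (or equivalently the $p$-center of mass of the two-point orbit), using only completeness and $p$-uniform convexity of $Y$, which are already in force. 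Both approaches prove the theorem; yours buys independence from the bipartiteness bookkeeping in this final step, at the modest cost of one extra invocation of Bruhat's Lemma.
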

\begin{proof}
By Theorems \ref{thm:poincare-to-avg} and \ref{thm:transfer},
there exist arbitrarily large $N$ such that
for any equivariant $f\in B(X_\alpha,Y)$ (identified with its pull-back
to $X$) there is some $m$ comparable to $\log N$ such that:

$$ \Ep_{\muX^{j}}\left(\Ap_{\muX^{jm}} f\right) \lesssim_{p,\uY,j,d}
  \left(Q(N) + \frac{1}{\log N}\right)
    \Ep_{\muX^j} (f),$$
where $Q(N)\to 0$ as $N\to \infty$.  Choosing $N$ large enough,
we see can ensure the existence of $m$ such that
$$\Ep_{\muX^j}\left(\Ap_{\muX^{jm}} f\right) \leq \hf \Ep_{\muX^j} (f).$$

Note that the choice of $N$ was independent of $f$.  Now
Proposition \ref{pro:iter-fp} shows that iterating the averaging
(with $m$ depending on $f$ but bounded by $N$) leads to a sequence
converging to a fixed point (here $\Gamma\bs X$ is a s single point,
so $B(X,Y)$ is non-empty).

In more detail, let $\mu_{X_\alpha}$ denote the standard random walk
on $X_\alpha$.  We have in fact shown the existence of $m$ such that
$$\Ep_{\mu_{X_\alpha}^j}\left(\Ap_{\mu_{X_\alpha}^{jm}} f\right) \leq \hf
\Ep_{\mu_{X_\alpha}^j} (f).$$
In order to apply Proposion \ref{pro:iter-fp} we further need to verify that
a certain graph is connected -- specifically the Cayley graph of $\Ga$ with
respect to the set $S^j$.  Since $j$ is even $S^j$ contains $S^2$
(as sets of elements of $\Ga$), so it is enough to verify that $S^2$ is a set
of generators for $\Ga$.  Indeed, the graphs $G_i$ are non-bipartite and
hence contain odd cycles.  It follows that some relators in $R_\alpha$ have
odd length, so that up to multiplication by a relator, every element of $\Ga$
can be represented by a word in $S$ of even length.
\end{proof}

\begin{rem}
Theorem~\ref{thm:techthm} was formulated for the limiting wild group $\Gamma_\alpha$, i.e., the group corresponding to the infinite graph $G$. Arguing identically for the random group  corresponding to the relations of each $G_i$ separately, we obtain Theorem~\ref{thm:main intro}.
\end{rem}

\bigskip
\noindent{\bf Acknowledgements.} We are very grateful to the
anonymous referee for the careful reading of our manuscript, and for
many helpful suggestions.

\subsection*{Added in proof}
Francois Dahmani pointed out to us that the answer to one of the questions that we asked
in the introduction is known.
Specifically, in \cite[Sec.\ 8]{Kapovich:PolygonReps} it is shown how
to construct a hyperbolic group with the fixed-point property on all
symmetric spaces and buildings associated to linear groups.




\bibliography{Graph_Gps_FP.11-01-14}
\bibliographystyle{abbrv}

\end{document}